\newtheorem{thm}{Theorem}[section]
\newtheorem{cor}[thm]{Corollary}
\newtheorem{lem}[thm]{Lemma}
\newtheorem{prop}[thm]{Proposition}
\theoremstyle{remark}
\newtheorem{rem}[thm]{Remark}
\theoremstyle{definition}
\newtheorem{defi}[thm]{Definition}
\newtheorem{notation}[thm]{Notation}
\newcommand{\ins}{\mathbf{Insert}}
\newcommand{\patt}{\mathbf{Patt}}
\newcommand{\dom}{\mathbf{Dom}}
\newcommand{\pprec}{\prec\!\prec}
\newlength{\hatchspread}
\newlength{\hatchthickness}
\tikzset{hatchspread/.code={\setlength{\hatchspread}{#1}},
         hatchthickness/.code={\setlength{\hatchthickness}{#1}}}
\tikzset{hatchspread=5pt,
         hatchthickness=0.2pt}
\pgfqpoint{\hatchspread}{\hatchspread}}
\begin{document}

\title{Structure of Random 312-Avoiding Permutations}
\author{Neal Madras \\ Department of Mathematics and Statistics \\
York University \\ 4700 Keele Street  \\ Toronto, Ontario  M3J 1P3 Canada 
\\  {\tt  madras@mathstat.yorku.ca}  \\ and \\ Lerna Pehlivan  \\ Department of Mathematics \\
University of Washington \\Seattle, WA 98195-4350 USA \\  {\tt  pehlivan@math.washington.edu} }
\maketitle

\begin{abstract} 
We evaluate the probabilities of various events under the uniform distribution on the set of 
$312$-avoiding permutations of $1,\ldots,N$.  
We derive exact formulas for the probability that the $i^{th}$ element of a random permutation is a 
specific value less than $i$, and for joint probabilities of two such events.
In addition, we obtain asymptotic approximations to these probabilities for large $N$ when the 
elements are not close to the boundaries or to each other. 
We also evaluate the probability that the graph of a random $312$-avoiding permutation 
has $k$ specified 
decreasing points, and we show that for large $N$ the points below the diagonal look 
like trajectories of a random walk.  
\end{abstract}

\section{Introduction}
Let $S_N$ denote the set of permutations of numbers $1,\ldots, N$ for each positive integer $N$. Given $\tau \in S_k$ (with $k\leq N$), we say that a permutation 
$\sigma=\sigma_1\ldots\sigma_N$ avoids the pattern $\tau$ (or ``$\sigma$ is $\tau$-avoiding'') if
there is no subsequence of $\sigma$ with length $k$ having the same relative order as $\tau$. The set of $\tau$-avoiding permutations in $S_N$ is denoted by $S_N(\tau)$. For example the permutation $435621$ avoids the $312$ pattern and hence $435621 \in S_6(312)$ but it is not an element of $S_6(321)$ since 432, 431, 421, 321, 521, and 621 are subsequences in $435621$ 
having the $321$ pattern.
A permutation $\sigma=\sigma_1\ldots\sigma_N$ can be represented as a function $\sigma $ that maps $i$ to $\sigma(i)=\sigma_i$. The graph of this function is the set of $N$ points $\left\{(i,\sigma_i):i=1\ldots N\right\}$ (see Figure \ref{fig:exp}). 
Points of the form $(i,\sigma_i)=(i,i)$ are said to be on the diagonal of the graph of $\sigma$ (these correspond to fixed points of the permutation).

\begin{center}
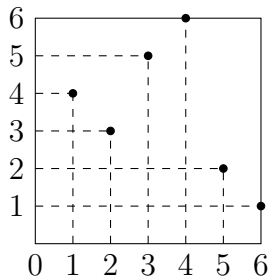
\begin{figure}[htp]
\centering
  \begin{tikzpicture}[scale=0.5]
    \draw (0,0) rectangle (6,6);
    \draw [fill] (1,4) circle (0.1);
    \draw [fill] (2,3) circle (0.1);
    \draw [fill] (3,5) circle (0.1);
    \draw [fill] (4,6) circle (0.1);
    \draw [fill] (5,2) circle (0.1);
    \draw [fill] (6,1) circle (0.1);

    \draw [dashed] (1,0) -- (1,4);
    \draw [dashed] (2,0) -- (2,3);
    \draw [dashed] (3,0) -- (3,5);
    \draw [dashed] (4,0) -- (4,6);
    \draw [dashed] (5,0) -- (5,2);
    
    \draw [dashed] (0,4) -- (1,4);
    \draw [dashed] (0,3) -- (2,3);
    \draw [dashed] (0,5) -- (3,5);
    \draw [dashed] (0,2) -- (5,2);
    \draw [dashed] (0,1) -- (6,1);

    \node [below] at (0,0) {$0$};
    \node [below] at (1,0) {$1$};
    \node [below] at (2,0) {$2$};
    \node [below] at (3,0) {$3$};
    \node [below] at (4,0) {$4$};
    \node [below] at (5,0) {$5$};
    \node [below] at (6,0) {$6$};
    \node [left] at (0,1) {$1$};
    \node [left] at (0,2) {$2$};
    \node [left] at (0,3) {$3$};
    \node [left] at (0,4) {$4$};
    \node [left] at (0,5) {$5$};
    \node [left] at (0,6) {$6$};
  \end{tikzpicture}
  \caption{$\sigma=435621 \in S_6$ viewed as a function $i \mapsto \sigma_i$. This is a 
  $312$-avoiding permutation.}
	\label{fig:exp}
  \end{figure}
  \end{center}

The study of pattern-avoiding permutations often reveals connections to other combinatorial objects.  
For example in \cite{bona2} it is shown that there is a bijection between $1342$-avoiding permutations and plane forests of $\beta(0,1) $-trees,  as well as  with ordered collections of rooted bicubic planar maps. 

Among other well studied connections of permutations excluding or including certain patterns in the literature are Kazhdan-Lusztig polynomials, singularities of Schubert varieties, Chebyshev polynomials, rook polynomials for Ferrers boards.  In a recent book \cite{kitaev} Kitaev goes through a vast amount of literature to point out the connections of permutations with other mathematical objects. Also, Bouvel and Rossin \cite{bouvel} show how permutation patterns are related to problems in computational biology.     

One of the initial motivations to study pattern avoiding permutations came from computer science. 
A basic problem in computer science is sorting $n$ distinct elements in increasing order. Stack sorting is an algorithm that does the sorting operation efficiently, although it only works
on some permutations. It was observed that a permutation is stack sortable if and only if it
avoids the pattern $231$. A detailed explanation of the connection can be found in B{\'o}na's book \cite{bona1}. An important reference on stack sorting is Knuth's book \cite{knuth}. 

 Two recent papers of Madras and Liu \cite{madras2} and Atapour and Madras \cite{madras}  
 present numerical and probabilistic approaches to  investigate the shapes of random pattern avoiding permutations, mainly of length three, four and five. Both papers include Monte Carlo simulations 
 suggesting the limiting distributions of the positions of points of the permutations. For example  
Atapour and Madras \cite{madras} present the result of Monte Carlo simulation 
(similar to Figure \ref{fig:MonteCarlo} here) 
which suggests that typical $312$-avoiding permutations are accumulated near the
diagonal as well as below the diagonal. 
To generate random $312$-avoiding permutations, they run 
a Markov chain on $S_N(312)$ which is irreducible, symmetric
and aperiodic \cite{madras2}, and hence has the uniform distribution on $S_N(312)$ 
as its limiting distribution.
They further prove the following results that support the findings of the simulations. 

	\begin{figure}[htbp]
		\centering
		\includegraphics[width=.7\textwidth,keepaspectratio ]{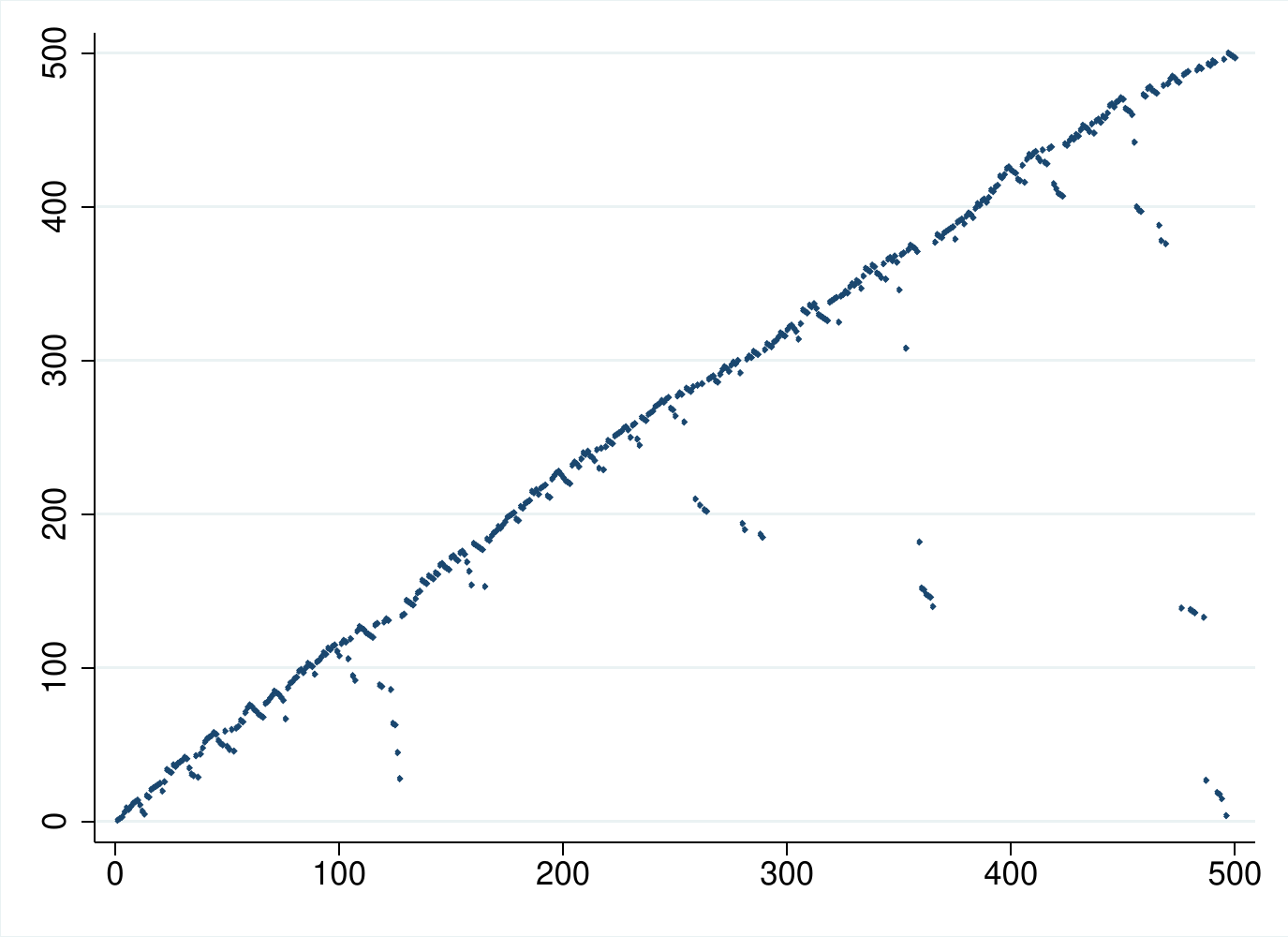}
		\caption{Graph of a randomly generated 312-avoiding permutation with N=500} 
		\label{fig:MonteCarlo}
	\end{figure}

\begin{defi}\label{uniform}
Consider a pattern $\tau$. For each $N \geq 1$, let $P_N^{\tau}$ be the uniform distribution on the set $S_N(\tau)$; that is $P_N^{\tau}(A)\,=\,\left|A\right|/\left|S_N(\tau)\right|$. For simplicity  
we shall write $P_N$ to denote $P_N^{312}$ throughout this paper.
\end{defi}   
\begin{thm}\label{madras}\cite{madras}
Define the function $K^*$ on the unit square $[0,1]^2$ by
$$K^{*}(s,t)=\left\{ \begin{array}{cll}
		1                  &\mbox{ if } 0 \leq t \leq s \leq 1,\\ 
		\frac{1}{4}\frac{(2-s-t)^{2-s-t}(s+t)^{s+t}}{(1-s)^{1-s}(1-t)^{1-t}t^ts^s} &\mbox{ if } 0 \leq s \leq t \leq 1. 
		\end{array}\right.$$
Then for any  relatively open subset $D$  of $[0,1]^2$, we have
   \begin{equation*}
      \lim_{N \rightarrow \infty} \left[P_N\left\{\left(\frac{i}{N},\frac{\sigma_i}{N}\right) \in D \mbox{ for some } i \in \left\{1,\ldots,N \right\}  \right\}\right]^{1/N} = \;
       \sup \left\{K^{*}(s,t): (s,t)\in D  \right\}.
      \end{equation*}
\end{thm}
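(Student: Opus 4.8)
The plan is to translate the event into a counting problem, reduce it to a single prescribed point, and then extract the exponential rate with Stirling's formula. Write $C_N=|S_N(312)|$ (the $N$th Catalan number) and, for $1\le i,j\le N$,
\[
   f_N(i,j)\;=\;\bigl|\{\sigma\in S_N(312):\sigma_i=j\}\bigr|,\qquad\text{so that}\qquad P_N(\sigma_i=j)=\frac{f_N(i,j)}{C_N}.
\]
First I would reduce Theorem~\ref{madras} to the pointwise statement that $P_N(\sigma_{i_N}=j_N)^{1/N}\to K^*(s,t)$ whenever $i_N/N\to s$ and $j_N/N\to t$, together with a uniform upper bound $P_N(\sigma_i=j)\le (K^*(i/N,j/N)+\varepsilon_N)^N$ with $\varepsilon_N\to 0$ independent of $(i,j)$. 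For the lower bound, since $D$ is open and $K^*$ is continuous on $[0,1]^2$, one picks $(s^*,t^*)\in D$ with $K^*(s^*,t^*)>\sup_{(s,t)\in D}K^*(s,t)-\varepsilon$; then $(\lfloor s^*N\rfloor,\lfloor t^*N\rfloor)\in ND$ for large $N$ and the single-point limit applies. For the upper bound, the union bound gives $P_N(\exists\,i:(i/N,\sigma_i/N)\in D)\le N^2\max_{(i,j):(i/N,j/N)\in D}P_N(\sigma_i=j)$, and since $N^2$ is subexponential and $\sup_{\overline D}K^*=\sup_{D}K^*$ by continuity, the uniform estimate finishes it.

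The combinatorial heart is a recursive description of $312$-avoiders. If $\sigma\in S_N(312)$ and $k=\sigma_N$, then $\sigma$ restricted to positions $1,\dots,k-1$ is a $312$-avoiding permutation of $\{1,\dots,k-1\}$, and $\sigma$ restricted to positions $k,\dots,N-1$ is, after subtracting $k-1$ from the positions and $k$ from the values, a $312$-avoiding permutation of $\{1,\dots,N-k\}$; conversely, any such data yields a $312$-avoider, because no forbidden pattern can straddle the ``left block'', the ``right block'', and the entry $\sigma_N=k$ (equivalently, $\sigma^{-1}$ avoids $231$, and this is its standard recursive block decomposition). Since the position $i$ and the value $j$ are forced to lie together in the left block, in the right block, or at the last coordinate, conditioning on $k$ yields, for $i<N$,
\[
   f_N(i,j)\;=\;\sum_{k>\max(i,j)}C_{N-k}\,f_{k-1}(i,j)\;+\;\sum_{k\le\min(i,j-1)}C_{k-1}\,f_{N-k}(i-k+1,\,j-k),
\]
with $f_N(N,j)=C_{j-1}C_{N-j}$ and the obvious boundary conventions ($f_m(i,j)=0$ unless $1\le i,j\le m$).

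If $j\le i$, choosing $k=\sigma_N=i+1$ places position $i$ at the end of the left block and gives $f_N(i,j)\ge C_{j-1}C_{i-j}C_{N-i-1}$; since $C_m\sim 4^m/(\sqrt{\pi}\,m^{3/2})$, the ratio $f_N(i,j)/C_N$ is then only polynomially small, so its $1/N$th power tends to $1=K^*(s,t)$ on the closed lower triangle. If $j>i$, every term of the recursion keeps the pair strictly above the diagonal, so unwinding it expresses $f_N(i,j)$ as a finite sum, indexed by the successive decomposition parameters, of products of Catalan numbers and binomial coefficients. I would then apply Stirling's formula with $(i,j)$-uniform error control, use that a sum of $\mathrm{poly}(N)$ nonnegative terms has the same $1/N$th-power limit as its largest term, and be left with maximizing a smooth function of the rescaled decomposition parameters over a simplex; the maximum should be exactly $\log K^*(s,t)$ — the constant $\tfrac14$ and the four factors of the form $x^x$ in $K^*$ are precisely what this optimization together with the Catalan normalization produce.

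The routine parts are the reduction in the first paragraph and the Stirling asymptotics, the only delicate point there being uniformity of the error as $(i/N,j/N)$ approaches $\partial[0,1]^2$ (where $K^*$ remains continuous, and small indices must be handled by the exact count rather than an asymptotic). The genuine obstacle is the middle step: establishing the structural decomposition, and then \emph{solving} the recursion for $f_N(i,j)$ in closed form — or at least pinning down its exponential order directly — and verifying that the resulting variational problem has value $\log K^*(s,t)$. A clean bijection with Dyck or ballot-type lattice paths, rather than a head-on assault on the recursion, is the natural way to carry this out.
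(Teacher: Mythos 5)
First, a point of comparison: this paper does not prove Theorem \ref{madras} at all --- it is quoted from \cite{madras}, and the present paper only builds on it --- so there is no internal proof to measure you against; your proposal has to stand on its own. On its own terms, your framework is sound as far as it goes: the reduction of the ``for some $i$'' event to single-point asymptotics via a union bound over the $O(N^2)$ lattice points plus continuity of $K^*$ is fine, the block decomposition at $k=\sigma_N$ (values $<k$ occupy positions $1,\ldots,k-1$, values $>k$ occupy $k,\ldots,N-1$, no 312 straddles the blocks) is correct, the resulting recursion for $f_N(i,j)$ is correct, and the below-diagonal lower bound $f_N(i,j)\ge C_{j-1}C_{i-j}C_{N-i-1}$ (take $k=i+1$) does give rate $1$ there.

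The genuine gap is exactly the step you flag yourself: for $j>i$ you never extract the exponential order of $f_N(i,j)$, never write down the variational problem concretely, and never verify that its value is $\log K^*(s,t)$; the specific expression $\tfrac14 (2-s-t)^{2-s-t}(s+t)^{s+t}/\bigl[(1-s)^{1-s}(1-t)^{1-t}t^t s^s\bigr]$ --- which is the entire content of the theorem above the diagonal, including the uniform upper bound your union-bound step needs --- is asserted, not derived. Unwinding your two-variable recursion head-on is not a routine computation, and ``the maximum should be exactly $\log K^*$'' is not an argument. The route actually used in \cite{madras}, and visible in this paper's Remark \ref{remMadras} and the Krattenthaler bijection of Section 3.1, bypasses the recursion: the number of $\sigma$ with $\sigma_i=j$ \emph{as a left-to-right maximum} factors over two Dyck/ballot segments, giving the closed form $|S^{\Box}(N,i,j)|=\frac{(j-i+1)^2}{j(N-i+1)}\binom{2N-i-j}{N-i}\binom{i+j-2}{j-1}$ for $i\le j$, and Stirling applied to these two binomials divided by $C_N\approx 4^N$ produces precisely the four $x^x$ factors and the constant $\tfrac14$ in $K^*$. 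One must then argue separately that dropping the left-to-right-maximum condition does not change the exponential order (e.g.\ by summing over the location of the relevant left-to-right maximum northwest of $(i,j)$, polynomially many terms), together with uniform error control near the boundary of the square. Without these ingredients --- the closed binomial formula (or an equivalent path count) and the passage from $S^{\Box}$ to $S^{\bullet}$ above the diagonal --- the proof is not complete; your last sentence correctly identifies the missing bijection, but identifying it is where the theorem actually lives.
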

Theorem \ref{madras} concludes that it is rare to have points of the graph well above the diagonal (since $K^*(s,t)<1$ for $0<s<t<1$), but  it is not rare to have points well below the diagonal.
A related result is the following, which says 
that the number of points well below diagonal is  $o(N)$ with high probability.

\begin{prop}\cite{madras}
Let $\delta > 0$, $0 < t < 1$ and $K_N(\sigma,\delta N)= \left| \left\{i: \sigma_i < i - \delta N \right\} \right|$.  Then
           $$ \lim_{N \rightarrow \infty} [P_N(K_N(\sigma, \delta N) > tN)]^{1/N} < 1 .$$
\end{prop}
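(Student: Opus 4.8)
The plan is to deduce the Proposition from Theorem~\ref{madras}. Note that one cannot penalize the deep low points directly, because $K^{*}\equiv 1$ on the entire closed region below the diagonal; so the idea is to show that having more than $tN$ points a distance $\delta N$ below the diagonal \emph{forces} the graph of $\sigma$ to contain at least one point lying a definite distance \emph{above} the diagonal and a definite distance away from $\partial[0,1]^{2}$, and then to invoke the fact (noted right after Theorem~\ref{madras}) that $K^{*}<1$ strictly above the diagonal, so that such a point is exponentially unlikely.

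The transfer mechanism is the elementary identity $\sum_{i=1}^{N}(i-\sigma_i)=0$, valid for every permutation. On the event $\{K_N(\sigma,\delta N)>tN\}$ we have $\sum_{i:\,i-\sigma_i>\delta N}(i-\sigma_i)>t\delta N^{2}$, hence also $\sum_{i:\,\sigma_i>i}(\sigma_i-i)>t\delta N^{2}$. Fix $\epsilon=\gamma=t\delta/5$. I would then bound the part of this last sum contributed by the "uninteresting'' excedances, namely those $i$ with $i<\epsilon N$, or with $\sigma_i>(1-\epsilon)N$, or with $\sigma_i-i<\gamma N$: in the first case there are at most $\epsilon N$ terms each at most $N$; in the second case there are at most $\epsilon N$ such values $\sigma_i$, again giving terms at most $N$; in the third case there are at most $N$ terms each less than $\gamma N$. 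Thus these three contributions together are at most $(2\epsilon+\gamma)N^{2}=\tfrac35 t\delta N^{2}<t\delta N^{2}$, so at least one excedance escapes all three categories; that is, there exists $i$ with $(i/N,\sigma_i/N)$ in the set $R=\{(s,u):\epsilon\le s,\ u\le 1-\epsilon,\ u-s\ge\gamma\}$. This bookkeeping is the one step that needs genuine care: the constants $\epsilon,\gamma$ must be chosen so that the escaped mass is a fixed fraction below $t\delta N^{2}$ \emph{and} so that the surviving region $R$ stays bounded away from each of the four edges of $[0,1]^{2}$ and from the diagonal simultaneously (one checks $\epsilon=\gamma=t\delta/5$ does both).

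Finally, $R$ is a compact subset of the open triangle $\{0<s<u<1\}$, on which $K^{*}$ is continuous and strictly less than $1$. Pick a relatively open $D\subseteq[0,1]^{2}$ with $R\subseteq D$ and $\overline{D}\subseteq\{0<s<u<1\}$; then $\sup_{D}K^{*}=\max_{\overline{D}}K^{*}=:\kappa<1$. Since $\{K_N(\sigma,\delta N)>tN\}\subseteq\{(i/N,\sigma_i/N)\in D\text{ for some }i\}$, Theorem~\ref{madras} gives
\[
\limsup_{N\to\infty}\bigl[P_N(K_N(\sigma,\delta N)>tN)\bigr]^{1/N}\;\le\;\sup_{D}K^{*}\;=\;\kappa\;<\;1 ,
\]
which is the asserted conclusion. (If one wants the limit exactly as stated rather than the limit superior, one can intersect over a shrinking family of such sets $D$, or argue separately that the limit exists; but the substantive content is the strict inequality, and beyond the pigeonhole step no input other than Theorem~\ref{madras} is needed.)
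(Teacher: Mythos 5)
Your argument is correct in substance, and it necessarily differs from the paper, because the paper does not prove this proposition at all: it is quoted from \cite{madras}, which establishes it by its own combinatorial estimates. What you do instead is derive it as a corollary of Theorem \ref{madras}: the identity $\sum_{i}(i-\sigma_i)=0$ converts the hypothesis that more than $tN$ points lie at depth $\delta N$ below the diagonal (total deficit $>t\delta N^{2}$) into an equal amount of excedance mass, and your bookkeeping with $\epsilon=\gamma=t\delta/5$ correctly shows that this mass cannot be carried entirely by excedances near the sides of the square or within $\gamma N$ of the diagonal, so some point $(i/N,\sigma_i/N)$ must land in the compact set $R$, which sits strictly inside the open triangle $\{0<s<u<1\}$ where $K^{*}<1$; taking a relatively open $D$ with $R\subseteq D$ and $\overline{D}$ inside that triangle then gives the exponential bound from Theorem \ref{madras}. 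Two points deserve polish, neither fatal. First, your category counts carry $O(N)$ rounding slack (e.g.\ at most $\lceil\epsilon N\rceil$ indices have $\sigma_i>(1-\epsilon)N$), so the inclusion of events holds only for all sufficiently large $N$, which is all you need for the conclusion. Second, as you acknowledge, the argument yields $\limsup_{N\rightarrow\infty}[P_N(K_N(\sigma,\delta N)>tN)]^{1/N}\leq\kappa<1$ rather than the existence of the limit asserted in the statement; shrinking the sets $D$ does not address existence, which here is simply inherited from \cite{madras}. What your route buys is a short, self-contained deduction from the one theorem already quoted in this paper, making the qualitative content transparent; what it costs is that it still rests on \cite{madras} (Theorem \ref{madras} comes from the same source) and does not recover the existence of the limit itself.
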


The present paper is mainly motivated by these results in \cite{madras}. 
In this paper we investigate the probabilities of having a $312$-avoiding permutation that has 
one or two specified points below the diagonal (i.e., satisfying $\sigma_i=j$ for specified $i$ and 
$j$ with $j<i$). 
We also extend our results to $k$ decreasing points below the diagonal.  
Exact evaluations of the probabilities 
and the approximation results for these probabilities for large $N$ are stated in the next section. 
Our main theorems imply that the probability of obtaining $312$-avoiding permutations with one
specified point below the diagonal is of order $N^{-3/2}$, and the probability of obtaining 
$312$-avoiding permutations with two (well separated) specified points below the diagonal 
is of order $N^{-3}$.  
However, the two-point probability is not approximated by the product of the corresponding 
one-point probabilities.  
In particular, Corollary \ref{cor2}
describes situations in which two one-point events are positively or negatively correlated.
Exact combinatorial results for 312-avoiding permutations with specified points above the diagonal
could also be calculated in similar manner; however in this paper we concentrate our attention 
below the diagonal, as motivated by \cite{madras}.  

 While this paper was being written, S. Miner and I. Pak completed a preprint (now \cite{pak}) 
that investigates probabilities of random $123$- and $132$-avoiding permutations with one 
specified point and their asymptotics.  In particular, \cite{pak} independently proves Theorems \ref{propS_A(N,N-t,j)}
and \ref{propapprox1}, and extends these results considerably in directions that we have 
not pursued.
 
 This paper is organized as follows. Section 2 collects the main results of this paper. Section 3 states the definitions, the basic terminology and results needed for the remaining parts of the paper. Section 4 consists of the proofs of Theorems \ref{propS_A(N,N-t,j)}, \ref{propS_B(N)} and \ref{propS_C(N)} which give exact formulas for probabilities of obtaining a $312$-avoiding permutation that has 
 one or
 two  specified points below the diagonal. 
 Theorem \ref{propS_A(N,N-t,j)} treats the one point case. Theorem \ref{propS_B(N)} 
 considers the case that $\sigma_{i_1}=j_1 >\ldots > \sigma_{i_k}=j_k$ with
 $j_k < \ldots < j_1 < i_1 <\ldots< i_k $. Finally, Theorem \ref{propS_C(N)} is the case that 
 $\sigma_{i_1}=j_1<\sigma_{i_2}=j_2$ with  $j_1 < i_1<i_2$ and $j_2 < i_2$.
 Section 5 gives the proofs of Theorems \ref{propapprox1}, \ref{propapprox2} and \ref{propapprox3},
 which are asymptotic approximations of the probabilities calculated in Theorems 
 \ref{propS_A(N,N-t,j)}, \ref{propS_B(N)} and  \ref{propS_C(N)} respectively. 
Section 6 proves results related to the limiting distribution of $\sigma$ near the lower right corner of the square $\left[1,N\right]^2$, as well as the limiting conditional distribution of $\sigma$ northwest 
of a given point $(i,j)$ below the diagonal given that $\sigma_{i}=j$.

\section{Main Results}   \label{sec-results}

 Let $C_N$ denote the Catalan number,  $C_N = \frac{1}{N+1}\binom{2N}{N}$ for $N \geq 0$. The proof of the well known result that  $\left| S_N(\tau)\right| = C_N$ for $\tau \in S_3$ and $N \geq 1$ can be found in ~\cite{bona1} or ~\cite{simion}. 

\begin{defi}\label{defiS_A(N,N-t,j)}  \label{def-SNij}
	For $N \in \mathbb{N}$ and $i,j\in[1,N]$, let
\[     S^{\bullet}(N,i,j)\;=\;\left\{\sigma \in S_N(312): \sigma_{i}=j \right\}    \hspace{15mm}\hbox{and}   \]
\[  S^{\Box}(N,i,j) \,=\,\{\sigma\in S_N(312):\sigma_i=j \hbox{ and }\sigma_k<j \hbox{ for all $k\in[1,i)$} \}.
\]
\end{defi}  

\begin{rem}\label{remMadras}  Observe that $|S^{\Box}(N,i,j)|=0$ if $j<i$. We also have
	\[  \left|S^{\Box}(N,i,j)\right| \;=\; \frac{(j-i+1)^2}{j(N-i+1)}\binom{2N-i-j}{N-i}\binom{i+j-2}{j-1}
	 \hspace{5mm}\mbox{whenever }1\leq i\leq j \leq N. \] 
For $i<j$, this was proven in \cite{madras}.  For the case $i=j$, we note that for every $\sigma\in S^{\Box}(N,j,j)$, 
we have $\sigma_k>j$ for every $k>j$.  Therefore
$\left|S^{\Box}(N,j,j)\right| =|S_{j-1}(312)|\times|S_{N-j}(312)|\, \,= C_{j-1}C_{N-j}\,=\,
\frac{1}{j(N-j+1)}\binom{2N-2j}{N-j}\binom{2j-2}{j-1}$. 
\end{rem}

Our first result gives the cardinality of $S^{\bullet}(N,N-t,j)$.
\begin{thm}\label{propS_A(N,N-t,j)}
	Let $j < N-t$. 
	Then
	\[ \left|S^{\bullet}(N,N-t,j)\right|= \sum_{i_0=\max \left\{1,j-t\right\}}^{j}C_{N-t-i_0}\frac{(j-i_0+1)^2}{j(t	+1)}\binom{i_0+2t-j}{t}\binom{i_0+j-2}{j-1}.
	\]
\end{thm}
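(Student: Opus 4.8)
Write $i=N-t$, so the hypothesis reads $j<i$ and the task is to count $S^{\bullet}(N,i,j)$. The plan is to construct a size-preserving bijection
\[
 S^{\bullet}(N,i,j)\;\;\longleftrightarrow\;\;\bigsqcup_{i_0}\;S^{\Box}(i_0+t,\,i_0,\,j)\,\times\,S_{i-i_0}(312),
\]
with the disjoint union taken over $\max\{1,j-t\}\le i_0\le j$, and then to read off the formula by substituting $|S_{i-i_0}(312)|=C_{N-t-i_0}$ together with the value of $|S^{\Box}(i_0+t,i_0,j)|$ provided by Remark~\ref{remMadras} (which applies precisely because $1\le i_0\le j\le i_0+t$ throughout that range). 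To a permutation $\sigma\in S^{\bullet}(N,i,j)$ the bijection will attach the index $i_0:=1+|\{k<i:\sigma_k<j\}|$.

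Everything rests on a structural description of an arbitrary $\sigma\in S^{\bullet}(N,i,j)$, obtained from two elementary observations, each exhibiting a forbidden $312$ pattern that uses the point $(i,j)$. First, the positions $k<i$ with $\sigma_k<j$ form the initial segment $\{1,\dots,i_0-1\}$: if there were $p<q<i$ with $\sigma_p>j$ and $\sigma_q<j$, then $\sigma_p,\sigma_q,\sigma_i$ (at positions $p<q<i$) would be a $312$ pattern. Hence positions $i_0,\dots,i-1$ all carry values exceeding $j$. Second, the set $V=\{\sigma_{i_0},\dots,\sigma_{i-1}\}$ of those values equals the interval $\{j+1,\dots,j+(i-i_0)\}$: all other values above $j$ occur among positions $>i$ (positions $<i_0$ hold values $<j$ and position $i$ holds $j$), and if some $v\in V$, at a position $p\in[i_0,i-1]$, exceeded some value $u>j$ occurring at a position $q>i$, then $\sigma_p,\sigma_i,\sigma_q$ (at positions $p<i<q$) would be a $312$ pattern; so each element of $V$ lies below every value above $j$ that appears after position $i$, which pins down $V$. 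Counting the $j-1$ values below $j$ — of which $i_0-1$ precede position $i$ and the remaining $j-i_0$ must occupy positions among the $t$ indices exceeding $i$ — yields the bounds $1\le i_0\le j$ and $j-i_0\le t$, i.e.\ $i_0\ge\max\{1,j-t\}$.

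With this in hand the bijection is a ``cut and relabel'' operation. From $\sigma$, let $\beta\in S_{i-i_0}(312)$ be the pattern of the block $\sigma_{i_0}\cdots\sigma_{i-1}$ (whose entries are exactly $j+1,\dots,j+(i-i_0)$), and let $\alpha$ be the pattern of $\sigma$ read off the $i_0+t$ positions $1,\dots,i_0-1,i,i+1,\dots,N$; the order-isomorphism defining $\alpha$ fixes every value $\le j$, so $\alpha$ has $j$ in its $i_0$-th slot with only values below $j$ before it, that is, $\alpha\in S^{\Box}(i_0+t,i_0,j)$. Conversely, given $i_0$ in the stated range and a pair $(\alpha,\beta)$, one rebuilds $\sigma$ by raising every letter of $\alpha$ that exceeds $j$ by $i-i_0$, and then inserting, between the $(i_0-1)$-st and $i_0$-th letters of the result, the word $\beta$ with $j$ added to each letter; a direct check shows this is a permutation of $[N]$ with $\sigma_i=j$, that the recovered index is again $i_0$ (the inserted block carries only values above $j$), and that the two maps are mutually inverse.

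The one step requiring real work is that the rebuilt $\sigma$ is $312$-avoiding. Split its positions into the three consecutive blocks $A=\{1,\dots,i_0-1\}$, $B=\{i_0,\dots,i-1\}$, $E=\{i,\dots,N\}$; then $\sigma$ restricted to $B$ is a copy of $\beta$ and $\sigma$ restricted to $A\cup E$ is a copy of $\alpha$, so any $312$ occurrence must meet both $B$ and $A\cup E$. One now runs through the few cases according to which block contains each of the three positions of a putative occurrence $\sigma_q<\sigma_r<\sigma_p$ ($p<q<r$), using that the values carried by $A$, $B$, $E$ lie in $[1,j-1]$, in $(j,j+(i-i_0)]$, and in $\{j\}\cup[1,j-1]\cup[j+(i-i_0)+1,N]$ respectively. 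Each case collapses either to a numerical contradiction or to a $312$ pattern already present in $\alpha$ — in the cases with $p\in B$ this pattern is produced by adjoining the entry $\alpha_{i_0}=j$ to two entries of $\alpha$ that are smaller than $j$. This case analysis is the main obstacle; once it is cleared, summing $|S^{\Box}(i_0+t,i_0,j)|\cdot C_{N-t-i_0}$ over $\max\{1,j-t\}\le i_0\le j$ and inserting the closed form from Remark~\ref{remMadras} gives exactly the stated expression.
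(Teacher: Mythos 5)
Your proposal is correct and follows essentially the same route as the paper: your index $i_0=1+|\{k<i:\sigma_k<j\}|$ coincides with the paper's $i_0=\min\{k:\sigma_k>j\}$, your two structural observations are the paper's Lemma \ref{lemmain}(b)--(e), and your cut-and-relabel maps are exactly the paper's $\phi_{\bullet}$ built from the $\ins$ operation, with the count finished via Remark \ref{remMadras}. The only part you defer (the case analysis showing the rebuilt permutation avoids $312$) is likewise left to the reader in the paper's Proposition \ref{propS_A(N+M,i+M,j)}, and your sketch of it is sound.
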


\noindent
We shall prove Theorem \ref{propS_A(N,N-t,j)} by constructing a natural bijection 
between $S^{\bullet}(N,N-t,j)$ and 

\noindent
$\bigcup_{i_0=\max\left\{1,j-t\right\}}^{j} S_{N-t-i_0}(312)\times S^{\Box}(t+i_0,i_0,j)$ for fixed $N,t$ and $j$
(see Definition \ref{phiA}). 
We shall use this bijection repeatedly for the proof of Theorem \ref{propS_B(N)}, and a closely related bijection for the proof of Theorem \ref{propS_C(N)}.  

\begin{rem}\label{Bullet}
It is not hard to check that $\left|S^{\bullet}(N,N-t,j)\right| = \left|S^{\bullet}(N,N-j,t)\right|.$
\end{rem}

\begin{rem}
Theorem \ref{propS_A(N,N-t,j)} 
also gives 
the probability $P_N(S^{\bullet}(N,i,j))$, since by definition it is equal to 
$\frac{\left|S^{\bullet}(N,N-t,j)\right|}{|S_N(312)|}$. 
This applies to  Theorems \ref{propS_B(N)} and \ref{propS_C(N)} as well.
\end{rem}

\begin{notation}  In preparation for upcoming results, we state our conventions on
asymptotics.  We write $f(N)\sim g(N)$ to mean that $\lim_{N\rightarrow\infty}f(N)/g(N) =1$.
We write $f(N)\asymp g(N)$ to mean that there exists a constant $C>0$ such 
that $C > g(N)/f(N) > C^{-1}$ for all sufficiently large $N$.  We write $f(N)\pprec g(N)$
to mean that there is an $\epsilon >0$ such that $g(N)-f(N) > \epsilon N$ for all sufficiently 
large $N$.  

For example, the statement  ``$f(i,j,N)\,=\,O(N)$ for $i\pprec j$'' would mean that for
any $\epsilon>0$, there exists a $C$ and $N_1$ such that $ \left|f(i,j,N) \right|\leq CN$ for all $i$, $j$, and
$N$ such that  $j-i >\epsilon N$ and $N\geq N_1$ (where $C$ and $N_1$ can depend on 
$\epsilon$).
\end{notation}

\begin{thm}\label{propapprox1}
Fix $0< \theta < \frac{1}{6}$.  Then 
	\begin{align*}
		P_N(S^{\bullet}(N,N-t,j))
		=\frac{N^{-3/2}}{2\sqrt{\pi}\left(1-\frac{N-t-j}{N}\right)^{3/2}\left(\frac{N-t-j}{N}\right)^{3/2}}(1+O(N^{3\theta-\frac{1}{2}}))
	\end{align*}
for $0\pprec j \pprec i \pprec N$.
\end{thm}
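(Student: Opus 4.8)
The starting point is the exact formula of Theorem~\ref{propS_A(N,N-t,j)} together with $|S_N(312)|=C_N$. Writing $n=N-t-j$ and re-indexing the sum by $m=j-i_0$, and observing that $\binom{i_0+2t-j}{t}=\binom{2t-m}{t}$ and $\binom{i_0+j-2}{j-1}=\binom{2(j-1)-m}{j-1}$, we obtain
\[
  P_N(S^{\bullet}(N,N-t,j))=\frac{1}{C_N}\sum_{m=0}^{\min\{j-1,t\}}a_m,\qquad
  a_m:=C_{n+m}\,\frac{(m+1)^2}{j(t+1)}\binom{2t-m}{t}\binom{2(j-1)-m}{j-1}.
\]
Under the hypothesis $0\pprec j\pprec i\pprec N$ one has $j\asymp N$, $t=N-i\asymp N$ and $n\asymp N$, so in particular the summation range has length $\asymp N$.

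The core of the proof is the asymptotics of $a_m$ on a window about the maximizing index. Stirling's formula gives $C_{n+m}\sim 4^{\,n+m}/\big(\sqrt{\pi}\,(n+m)^{3/2}\big)$, and for the binomials one writes $\log\binom{2t-m}{t}=t\,g(m/t)-\tfrac12\log(\pi t)+o(1)$ with $g(x)=(2-x)\log(2-x)-(1-x)\log(1-x)$, and Taylor-expands $g(x)=2\log2-x\log2-\tfrac{x^2}{4}-\tfrac{x^3}{8}+O(x^4)$, so that, for $m=o(t)$,
\[
  \binom{2t-m}{t}\sim\frac{4^{\,t}}{\sqrt{\pi t}}\,2^{-m}e^{-m^2/(4t)},\qquad
  \binom{2(j-1)-m}{j-1}\sim\frac{4^{\,j-1}}{\sqrt{\pi(j-1)}}\,2^{-m}e^{-m^2/(4j)}.
\]
The decisive point is that the exponential factors cancel exactly: $4^{\,n+m}\cdot4^{\,t}\cdot4^{\,j}\cdot2^{-m}\cdot2^{-m}=4^{\,N}$, which is annihilated by $1/C_N\sim\sqrt{\pi}\,N^{3/2}/4^{\,N}$. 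What survives is, up to relative errors handled below,
\[
  \frac{a_m}{C_N}\sim\frac{N^{3/2}}{4\pi\,n^{3/2}(jt)^{3/2}}\,(m+1)^2\exp\!\Big(-\frac{m^2}{4}\Big(\frac1t+\frac1j\Big)\Big),
\]
a polynomial times a Gaussian in $m$ of width of order $\sqrt N$; inspecting $a_{m+1}/a_m$ confirms that the maximizing $m$ is of that order and that $a_m$ decays for larger $m$.

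To carry out the summation I would truncate at $m\le N^{1/2+\theta}$. Since $\theta>0$, the Gaussian factor beyond this point is at most $e^{-cN^{2\theta}}$, so both the Gaussian tail and, by the above decay, the true tail $\sum_{m>N^{1/2+\theta}}a_m$ are negligible compared to any power of $N$. On the remaining window the Riemann sum may be replaced by an integral with error of lower order, and with $\beta=\tfrac14\big(\tfrac1t+\tfrac1j\big)=\tfrac{t+j}{4tj}$ one has $\int_0^\infty x^2e^{-\beta x^2}\,dx=\tfrac{\sqrt\pi}{4}\beta^{-3/2}$, hence $\sum_m(m+1)^2e^{-\beta m^2}\sim 2\sqrt\pi\,(tj)^{3/2}(t+j)^{-3/2}$. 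Substituting and using $t+j=N-n$ gives
\[
  P_N(S^{\bullet}(N,N-t,j))\sim\frac{N^{3/2}}{2\sqrt\pi\,(N-t-j)^{3/2}(t+j)^{3/2}},
\]
which is exactly the claimed main term after dividing numerator and denominator by $N^3$ and writing $\tfrac{t+j}{N}=1-\tfrac{N-t-j}{N}$.

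The remaining work, which I expect to be the main obstacle, is to make the error estimate uniform over the whole regime $0\pprec j\pprec i\pprec N$. The Euler--Maclaurin error in the Riemann-sum-to-integral step, the replacements of $(m+1)^2$ by $m^2$, of $t+1$ by $t$, and of $(n+m)^{-3/2}$ by $n^{-3/2}$, and the subleading part of the local-limit approximation to the binomials all contribute relative errors that are $o(N^{3\theta-1/2})$ on the truncation window. The dominant contribution is the cubic Stirling term $-m^3/(8t^2)$, together with its analogue $-m^3/(8j^2)$, in the exponents: bounded uniformly over $m\le N^{1/2+\theta}$ it has size $O(N^{3\theta-1/2})$, which is exactly the error term in the statement and is precisely what forces the restriction $\theta<\tfrac16$ (so that $N^{3\theta-1/2}\to0$). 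The algebraic ingredients --- the cancellation to $4^N$ and the evaluation of the Gaussian integral --- are routine once this uniform bookkeeping is in place.
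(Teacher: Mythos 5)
Your proposal is correct and follows essentially the same route as the paper's proof: rewrite the exact sum of Theorem~\ref{propS_A(N,N-t,j)} with the index measuring $j-i_0$, replace the two binomials and the Catalan ratio by their Gaussian/Stirling approximations, truncate at $N^{1/2+\theta}$ with an exponentially small tail, pass to the Gaussian integral, and identify the cubic correction of order $R_N^3/N^2=N^{3\theta-1/2}$ as the dominant error forcing $\theta<\tfrac16$ (the paper packages the binomial estimate as Lemma~\ref{lem-gamas} via Proposition~\ref{proplawler} and controls the tail by monotonicity of $\gamma(t,r)$ in $r$, but these are cosmetic differences from your direct Stirling expansion and ratio test). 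Your displayed asymptotics and final constant check out against the theorem, so no substantive gap remains.
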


\begin{figure}
\begin{tikzpicture}[scale=.5]
  \draw (0,0) rectangle (10,10);
  \draw [dashed] (0,0) -- (10,10);
  \draw [fill] (4,3) circle (0.125);
  \draw [dashed] (0,3) -- (4,3) -- (4,0);
  \draw [fill] (7,2) circle (0.125);
  \draw [dashed] (0,2) -- (7,2) -- (7,0);
  \node [left] at (0,3) {$j_1$};
  \node [left] at (0,2) {$j_2$};
  \node [left] at (0,10) {$N$};
  \node [below] at (0,0) {$1$};
  \node [below] at (4,0) {$N-t_1$};
  \node [below] at (7,0) {$N-t_2$};
  \node [below] at (10,0) {$N$};

  \draw (14,0) rectangle (24,10);
  \draw [dashed] (14,0) -- (24,10);
  \draw [fill] (18,3) circle (0.125);
  \draw [dashed] (14,3) -- (18,3) -- (18,0);
  \draw [fill] (21,5) circle (0.125);
  \draw [dashed] (14,5) -- (21,5) -- (21,0);
  \node [left] at (14,3) {$j_1$};
  \node [left] at (14,5) {$j_2$};
  \node [left] at (14,10) {$N$};
  \node [below] at (14,0) {$1$};
  \node [below] at (18,0) {$N-t_1$};
  \node [below] at (21,0) {$N-t_2$};
  \node [below] at (24,0) {$N$};

\end{tikzpicture}
\caption{These diagrams represents the requirement that a permutation has specified points $\sigma_{N-t_1}=j_1$ and $\sigma_{N-t_2}=j_2$ where $j_1<N-t_1$, $j_2<N-t_2$, and $N-t_1<N-t_2$. 
The left diagram corresponds to Theorem \ref{propS_B(N)} for $k=2$ and the right one corresponds to Theorem \ref{propS_C(N)}.}
\end{figure}
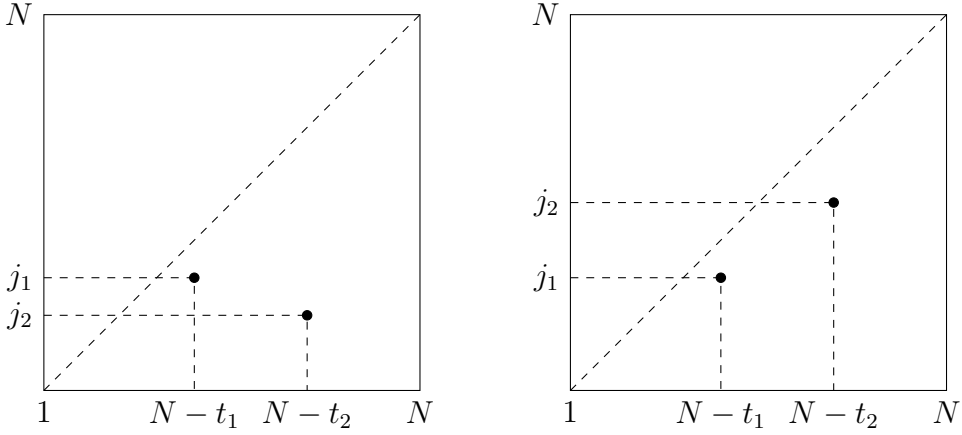

\begin{thm}\label{propS_B(N)}
 Let $j_k<j_{k-1} < \ldots < j_1 < N-t_1 < \ldots < N-t_k$ and define
\[		S^{{\searrow}_k}(N)\; \equiv\;  S^{{\searrow}_k}(N,N-t_1,\ldots,N-t_k,j_1,\ldots,j_k)=
		\left\{\sigma \in S_N(312):\sigma_{N-t_1}=j_1, \,\ldots \,\sigma_{N-t_k}=j_k \right\}.
\]
Then,
  \begin{align*}
		&\left|S^{\searrow_{k}}(N)\right|
		\;= \sum_{i=\max\left\{1,j_{k}-t_{k}\right\}}^{j_{k}}  \frac{(j_k-i+1)^2}{j_k(t_k+1)} \binom{i+2t_k-j_k}{t_k} \binom{i+j_k-2}{j_k-1} \times \\  
	& \left|S^{\searrow_{k-1}}(N-t_{k}-i,N-t_{1}-i+1\ldots,N-t_{k-1}-i+1,j_1-j_k,j_2-j_k,\ldots,j_{k-1}-j_{k})\right|.
	\end{align*}
	
	In particular for $k=2$ we have
	\begin{align*}
		\left|S^{\searrow_{2}}(N)\right| \equiv \left|S^{\searrow}(N)\right|\;=&
		\sum_{i_1=\max\left\{1,j_2-t_2\right\}}^{j_2}
		\frac{(j_2-i_1+1)^2}{j_2(t_2+1)}\binom{i_1+2t_2-j_2}{t_2}\binom{i_1+j_2-2}{j_2-1}\times \\
		&\sum_{i_0=\max\left\{1,(j_1-j_2)-(t_1-t_2-1)\right\}}^{j_1-j_2}C_{N-t_1-i_1-i_0+1}\frac{(j_1-j_2-i_0+1)^2}{(j_1-j_2)(t_1-t_2)}\times \\
		&\binom{i_0+2(t_1-t_2-1)-(j_1-j_2)}{t_1-t_2-1}\binom{i_0+j_1-j_2-2}{j_1-j_2-1}. 
	\end{align*}
\end{thm}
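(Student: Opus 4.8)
The plan is to prove the displayed recursion first, and then to read off the $k=2$ formula by substituting $k=2$ into it and invoking Theorem~\ref{propS_A(N,N-t,j)}. For the recursion we peel off the rightmost specified point $(N-t_k,j_k)$, which has the largest position and the smallest value since $t_1>\cdots>t_k$ and $j_k<\cdots<j_1$. Fix $\sigma\in S^{\searrow_k}(N)$ and let $i$ be the number of positions $p\le N-t_k$ with $\sigma_p\le j_k$. Exactly as in Remark~\ref{remMadras} and \cite{madras}, $312$-avoidance together with $\sigma_{N-t_k}=j_k$ forces $\max\{1,j_k-t_k\}\le i\le j_k$, and forces the values exceeding $j_k$ that occur among positions $\le N-t_k$ to be precisely $\{j_k+1,\ldots,j_k+(N-t_k-i)\}$. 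Applying the bijection $\phi_A$ of Definition~\ref{phiA} attached to the point $(N-t_k,j_k)$ sends $\sigma$ to a pair $(\alpha,\beta)$ with $\alpha\in S_{N-t_k-i}(312)$ and $\beta\in S^{\Box}(t_k+i,i,j_k)$ (here $i$ is the statistic just defined, which is the index $i_0$ of Definition~\ref{phiA}); and Remark~\ref{remMadras}, applied with $N$ replaced by $t_k+i$ and $j$ by $j_k$, gives
\[
\left|S^{\Box}(t_k+i,i,j_k)\right|\;=\;\frac{(j_k-i+1)^2}{j_k(t_k+1)}\binom{i+2t_k-j_k}{t_k}\binom{i+j_k-2}{j_k-1},
\]
which is exactly the coefficient in the recursion.

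The substance of the proof is a point-tracking statement for $\phi_A$: if $\sigma\in S^{\bullet}(N,N-t_k,j_k)$ has $\phi_A(\sigma)=(\alpha,\beta)$, then for each $\ell\in\{1,\ldots,k-1\}$ we have $\sigma_{N-t_\ell}=j_\ell$ if and only if $\alpha_{N-t_\ell-i+1}=j_\ell-j_k$. In the forward direction, the position $N-t_\ell$ carries the value $j_\ell>j_k$, and a one-line $312$-argument (a position $p$ with $N-t_\ell<p<N-t_k$ and $\sigma_p<j_k$ would form a $312$-pattern with $N-t_\ell$ and $N-t_k$) shows that no value below $j_k$ occurs strictly between $N-t_\ell$ and $N-t_k$; hence exactly $i-1$ positions below $N-t_\ell$ carry a value $\le j_k$, and reading off the explicit form of $\phi_A$ the point $(N-t_\ell,j_\ell)$ is carried to $(N-t_\ell-i+1,\,j_\ell-j_k)$. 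The reverse direction runs the same $312$-argument through $\phi_A^{-1}$: a point of $\alpha$ at $(N-t_\ell-i+1,\,j_\ell-j_k)$ has preimage at some position $q<N-t_k$ with $\sigma_q=j_\ell>j_k$, and counting the positions below $q$ ($i-1$ of them carrying a value $\le j_k$, the remaining $N-t_\ell-i$ carrying values above $j_k$) forces $q=N-t_\ell$. Granting this, for $\sigma\in S^{\bullet}(N,N-t_k,j_k)$ we have $\sigma\in S^{\searrow_k}(N)$ if and only if $\alpha\in S^{\searrow_{k-1}}\big(N-t_k-i,\,N-t_1-i+1,\ldots,N-t_{k-1}-i+1,\,j_1-j_k,\ldots,j_{k-1}-j_k\big)$; these shifted parameters again satisfy the hypotheses of the theorem, since $t_1>\cdots>t_{k-1}$, $j_1>\cdots>j_{k-1}$, $j_\ell<N-t_\ell$, and $i\le j_k$. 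Hence $\phi_A$ restricts to a bijection from $S^{\searrow_k}(N)$ onto the union over $i=\max\{1,j_k-t_k\},\ldots,j_k$ of the products $S^{\searrow_{k-1}}\big(N-t_k-i,\,N-t_1-i+1,\ldots,N-t_{k-1}-i+1,\,j_1-j_k,\ldots,j_{k-1}-j_k\big)\times S^{\Box}(t_k+i,i,j_k)$, and equating cardinalities yields the recursion.

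Finally, for $k=2$ we specialize the recursion and use that $S^{\searrow_1}(M,M-s,r)=S^{\bullet}(M,M-s,r)$, here with $M=N-t_2-i_1$, $s=t_1-t_2-1$, and $r=j_1-j_2$. Theorem~\ref{propS_A(N,N-t,j)} evaluates $|S^{\bullet}(M,M-s,r)|$ as a sum over an index $i_0$, and the substitutions $M-s-i_0=N-t_1-i_1-i_0+1$, $r(s+1)=(j_1-j_2)(t_1-t_2)$, and $i_0+2s-r=i_0+2(t_1-t_2-1)-(j_1-j_2)$ transform that sum into the stated double sum. (For general $k$ one may instead iterate the recursion.) We expect the point-tracking statement of the second paragraph to be the main obstacle --- in particular, pinning down the position shift as exactly $i-1$ rather than $i$, in both directions, from the explicit description of $\phi_A$; the remaining steps are bookkeeping, and the summation over $i$ is immediate.
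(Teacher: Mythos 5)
Your proposal is correct and follows essentially the same route as the paper: it restricts the Insert bijection $\phi_{\bullet;N,t_k,j_k}$ of Definition \ref{phiA} to permutations with the $k$ prescribed points, and your ``point-tracking'' claim (with the shift by $i-1$) is exactly what the paper obtains from Lemma \ref{lemmain}(e) and the definition of $\ins$, after which the $k=2$ case follows by the same substitution into Theorem \ref{propS_A(N,N-t,j)}.
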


\begin{thm}\label{propapprox2}

Fix $0< \theta < \frac{1}{6}$.  Then 
	\begin{align*}
		&P_N\left(S^{\searrow}(N)\right)=
		\frac{1}{4\pi}\frac{N^{-3}}{\left(\frac{(N-t_2-j_2)-(N-t_1-j_1)}{N}\right)^{3/2}\left(\frac{N-t_1-j_1}{N}\right)^{3/2}\left(1-\frac{N-t_2-j_2}{N}\right)^{3/2}}(1+O(N^{3\theta-1/2}))
	\end{align*}
for $0\pprec j_2 \pprec j_1\pprec N-t_1 \pprec N-t_2 \pprec N$.
\end{thm}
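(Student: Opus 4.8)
The plan is to feed the one-point asymptotics of Theorem~\ref{propapprox1} into the $k=2$ recursion of Theorem~\ref{propS_B(N)} and then repeat the Gaussian-sum estimate that proves Theorem~\ref{propapprox1}. Set $t'=t_1-t_2-1$, $L_0=N-t_1-j_1$, $M_0=N-t_2-j_2$, and re-index the $k=2$ case of Theorem~\ref{propS_B(N)} by $m=j_2-i$, so that
\[
\left|S^{\searrow}(N)\right|=\sum_{m}\frac{(m+1)^2}{j_2(t_2+1)}\binom{2t_2-m}{t_2}\binom{2j_2-m-2}{j_2-1}\,\bigl|S^{\bullet}\bigl(M_0+m,\,(M_0+m)-t',\,j_1-j_2\bigr)\bigr|,
\]
the sum running over $0\le m\le\min\{j_2-1,t_2\}$. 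First I would note that, under the hypothesis $0\pprec j_2\pprec j_1\pprec N-t_1\pprec N-t_2\pprec N$, for \emph{every} such $m$ the three relevant sizes of the inner one-point set---namely $j_1-j_2$, the difference $(M_0+m)-t'-(j_1-j_2)=L_0+m+1$, and $t'$---are all of order $N$, uniformly in $m$ (using $M_0+m\le N$). Hence Theorem~\ref{propapprox1}, combined with $C_n=4^n\pi^{-1/2}n^{-3/2}(1+O(1/n))$ and $|S_n(312)|=C_n$, applies to every term with the \emph{same} relative error $O(N^{3\theta-1/2})$ and gives, with $R:=t'+(j_1-j_2)=M_0-L_0-1$ (of order $N$, independent of $m$),
\[
\bigl|S^{\bullet}\bigl(M_0+m,\,(M_0+m)-t',\,j_1-j_2\bigr)\bigr|=\frac{4^{M_0+m}}{2\pi\,(L_0+m+1)^{3/2}R^{3/2}}\bigl(1+O(N^{3\theta-1/2})\bigr).
\]

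Substituting this, the factor $4^{m}$ cancels exactly against the $4^{-m}$ produced by the product of the two binomial coefficients, and the sum collapses to the Gaussian-type sum already handled in Theorem~\ref{propapprox1}. Concretely, from $\binom{2t-m}{t}\big/\binom{2t}{t}=\prod_{l=0}^{m-1}\frac{t-l}{2t-l}$ one extracts, with $\gamma:=\frac14\bigl(\frac1{t_2}+\frac1{j_2}\bigr)$, both the sharp expansion $\binom{2t_2-m}{t_2}=\binom{2t_2}{t_2}2^{-m}e^{-m^2/(4t_2)}\bigl(1+O(N^{3\theta-1/2})\bigr)$ valid for $m\le N^{1/2+\theta}$, and the global bound $\binom{2t_2-m}{t_2}\le\binom{2t_2}{t_2}2^{-m}e^{-m^2/(16t_2)}$ for all $m$ (and likewise for the $j_2$-binomial). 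Using the sharp expansions together with $(L_0+m+1)^{-3/2}=L_0^{-3/2}(1+O(N^{\theta-1/2}))$ on the window $m\le N^{1/2+\theta}$, and the global bounds on the complementary tail (whose contribution is super-polynomially small because $\gamma=\Theta(1/N)$ makes $\gamma N^{1+2\theta}\to\infty$), one obtains
\[
\left|S^{\searrow}(N)\right|=\frac{4^{M_0}\binom{2t_2}{t_2}\binom{2j_2-2}{j_2-1}}{2\pi\,j_2(t_2+1)\,L_0^{3/2}R^{3/2}}\sum_{m\ge0}(m+1)^2e^{-\gamma m^2}\bigl(1+O(N^{3\theta-1/2})\bigr).
\]
Replacing the last sum by $\int_0^\infty x^2e^{-\gamma x^2}\,dx=\frac{\sqrt\pi}{4}\gamma^{-3/2}$ (standard Euler--Maclaurin, error $O(N^{-1/2})$), inserting Stirling for $\binom{2t_2}{t_2}$, $\binom{2j_2-2}{j_2-1}$ and for $C_N$ in $P_N(S^{\searrow}(N))=|S^{\searrow}(N)|/C_N$, and simplifying: all powers of $t_2$ and $j_2$ cancel except the $(t_2+j_2)^{3/2}$ issuing from $\gamma^{-3/2}=(4t_2j_2/(t_2+j_2))^{3/2}$, the powers of $4$ sum to $N$, and one is left with $P_N(S^{\searrow}(N))=\frac{1}{4\pi}N^{3/2}\bigl(L_0\,R'\,(t_2+j_2)\bigr)^{-3/2}\bigl(1+O(N^{3\theta-1/2})\bigr)$, where $R'=(N-t_2-j_2)-(N-t_1-j_1)$ (equal to $R+1$). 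Since $L_0=N-t_1-j_1$ and $t_2+j_2=N-(N-t_2-j_2)$, dividing each of the three factors by $N$ converts this into the formula in the statement.

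The Stirling and Euler--Maclaurin steps are routine; \textbf{the main obstacle is the error accounting}. One must check that Theorem~\ref{propapprox1} is applicable with relative error $O(N^{3\theta-1/2})$ \emph{uniformly} over all $i$ in the summation range, and that each subsequent approximation---the binomial-ratio expansion (in which the cubic term of $\log\prod_{l<m}\frac{t_2-l}{2t_2-l}$ has size $O(m^3/t_2^2)$, which is why the window must be $m\lesssim N^{1/2+\theta}$ with $\theta<\tfrac16$), the replacement of $(L_0+m+1)^{-3/2}$ by $L_0^{-3/2}$, the sum-to-integral passage, and the tail truncation---contributes only a relative error $O(N^{3\theta-1/2})$. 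An alternative route, avoiding any appeal to uniformity in Theorem~\ref{propapprox1}, is to work directly from the explicit double sum in the $k=2$ case of Theorem~\ref{propS_B(N)}: the substitutions $i_1=j_2-m_1$ and $i_0=(j_1-j_2)-m_0$ turn the Catalan factor into $C_{L_0+m_0+m_1+1}$, whose $4^{m_0+m_1}$ growth cancels the $4^{-m_0-m_1}$ from the four binomial coefficients, so the double sum decouples into two independent copies of the same Gaussian-integral estimate used for Theorem~\ref{propapprox1}.
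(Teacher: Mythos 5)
Your proposal is correct and follows essentially the same route as the paper: start from the exact $k=2$ recursion of Theorem \ref{propS_B(N)}, evaluate the inner one-point count by the (uniform) asymptotics underlying Theorem \ref{propapprox1}, and then handle the remaining sum over $m$ by the same truncation-at-$N^{1/2+\theta}$ Gaussian estimate, with the $4^{\pm m}$ cancellation and the final Stirling bookkeeping matching the paper's computation. The only difference is organizational: the paper packages both the inner and the outer sums through Remark \ref{remrewriteapprox1} (recognizing the outer sum as another instance of the one-point sum with shifted $N$), whereas you invoke Theorem \ref{propapprox1} as a uniform black box for the inner cardinalities and redo the window/tail analysis by hand for the outer variable --- your closing ``alternative route'' via the decoupled double sum is in fact what the paper's argument amounts to.
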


\begin{thm}\label{propS_C(N)}
	For $N,j_1,j_2,t_1,t_2 \in \mathbb{N}$ such that $j_1 < N-t_1<N-t_2$, $j_1<j_2 < N-t_2$, define 
	$S^{\nearrow}(N)\equiv S^{\nearrow}(N,N-t_1,N-t_2,j_1,j_2)=\left\{ \sigma \in S_N(312):\sigma_{N-t_1}=j_1,\sigma_{N-t_2}=j_2 \right\}$.

\smallskip
\noindent
(a)  If $j_2 < N-t_1+1$, then $\left|S^{\nearrow}(N)\right|=0$.

\smallskip
\noindent
 (b) If $j_2 \geq N-t_1+1$, then
		\begin{align*}
			\left|S^{\nearrow}(N)\right|
			\;&=\sum_{i_1=\max\left\{1,j_1-j_2+N-t_1+1\right\}}^{j_1}\sum_{i_2=\max\left\{N-t_1+1,j_2-t_2\right\}}^{j_2} C_{N-t_1-i_1}C_{N-t_2-i_2}\times \\
			&\frac{(j_1-i_1+1)(j_2-i_2+1)}{j_1(t_2+1)}
			\binom{j_1+i_1-2}{j_1-1}\binom{i_2+2t_2-j_2}{t_2}\times \\
			& \left(\binom{i_1+i_2+j_2-j_1-2(N-t_1+1)}{j_2+(i_1-j_1-1)-(N-t_1)} - \binom{i_1+i_2+j_2-j_1-2(N-t_1+1)}{j_2-(N-t_1)}\right).
		\end{align*}
\end{thm}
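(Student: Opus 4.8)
The plan is to handle part (a) quickly and devote the main effort to part (b) via a decomposition adapted from the bijection of Definition \ref{phiA}. For part (a), suppose $\sigma \in S^{\nearrow}(N)$ with $j_2 < N-t_1+1$, i.e. $j_2 \le N-t_1$. Since $N-t_1 < N-t_2$ and $j_1 < j_2$, the three positions $N-t_1 < N-t_2$ carry values $j_1 < j_2$ with $j_2 \le N-t_1$. I would argue that there must exist an index between $N-t_1$ and $N-t_2$ (or at $N-t_1$ itself together with some earlier structure) forcing a $312$ pattern: concretely, because $\sigma_{N-t_1}=j_1 < j_2 \le N-t_1$, the value $j_2$ sits at position $N-t_2 > N-t_1$, and one shows every index $k \in (N-t_1, N-t_2)$ must satisfy $\sigma_k > j_2$ (else $(\sigma_{N-t_1},\sigma_k,\text{something})$ or $(\sigma_k,\ldots)$ gives a $312$), but then the values $\{1,\ldots,j_1-1,j_1+1,\ldots,j_2-1\}$ cannot be placed legally — a pigeonhole/counting contradiction. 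The cleanest route is probably: the points $(N-t_1,j_1)$ and $(N-t_2,j_2)$ are both below the diagonal with $N-t_1 < N-t_2$ and $j_1 < j_2$; for a $312$-avoider, any two points $(a,\sigma_a),(b,\sigma_b)$ with $a<b$ and $\sigma_a<\sigma_b$ force $\sigma_c \notin (\sigma_a,\sigma_b)$ for $a<c<b$ and also constrain positions before $a$; chasing this yields that $\sigma_a \ge a$ would be needed, contradicting $j_1 < N-t_1$ unless $j_2 \ge N-t_1+1$.

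For part (b), the strategy is to peel off the structure to the left of position $N-t_1$ and then between $N-t_1$ and $N-t_2$, reusing the machinery behind Theorem \ref{propS_A(N,N-t,j)}. Concretely, I would first apply the bijection $\phi_A$ (Definition \ref{phiA}) at the point $\sigma_{N-t_1}=j_1$ to write $\sigma$ in terms of a pair from $S_{N-t_1-i_1}(312) \times S^{\Box}(t_1+i_1,i_1,j_1)$, where $i_1$ ranges over $\max\{1,j_1-t_1\} \le i_1 \le j_1$; the factor $S_{N-t_1-i_1}(312)$ accounts for the entries at positions $\ge N-t_1$ whose values lie above a certain threshold, and $S^{\Box}$ accounts for the "staircase" of entries below $j_1$ appearing before position $N-t_1$. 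The new constraint $\sigma_{N-t_2}=j_2$ with $j_2 \ge N-t_1+1$ lives inside the first factor $S_{N-t_1-i_1}(312)$, but shifted: position $N-t_2$ becomes position $N-t_2-(N-t_1)= t_1-t_2$ after removing the $N-t_1$ leftmost columns... except the value $j_2$ also gets relocated because only $i_1$ of the small values survived to the left. I would track exactly which values remain, obtaining that inside $S_{N-t_1-i_1}(312)$ the forced point becomes $\sigma'_{?}= j_2 - (j_1 - i_1)$ or similar; here one must be careful that $j_2 \ge N-t_1+1$ guarantees this relocated point is \emph{on or above} the diagonal of the smaller square, so it is an $S^{\Box}$-type constraint rather than an $S^{\bullet}$ one. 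Applying Remark \ref{remMadras} (the formula for $|S^{\Box}|$) or rather a second application of $\phi_A$ at this relocated point then introduces the second summation index $i_2$ with range $\max\{N-t_1+1,j_2-t_2\} \le i_2 \le j_2$ and produces the $C_{N-t_2-i_2}$ factor, the binomial $\binom{i_2+2t_2-j_2}{t_2}$, and the multiplier $\tfrac{(j_2-i_2+1)}{t_2+1}$.

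The main obstacle — and where I expect the real work to lie — is the third factor, the difference of binomials $\binom{i_1+i_2+j_2-j_1-2(N-t_1+1)}{j_2+(i_1-j_1-1)-(N-t_1)} - \binom{\cdots}{j_2-(N-t_1)}$. This difference is not of "staircase" ($S^{\Box}$) type: it counts $312$-avoiding fillings of a region that must simultaneously lie to the left of the relocated point AND have its own lower-left corner constrained by $i_1$ leftover small values, i.e. it is the number of lattice paths in a box avoiding \emph{two} boundaries, which by a reflection/André-type argument equals a difference of two binomial coefficients. So the key step is: after both applications of $\phi_A$, identify the remaining free region as (in bijection with) paths from one corner to another staying weakly above a diagonal \emph{and} weakly below a horizontal line determined by $N-t_1$, count these by inclusion–exclusion (subtracting the reflected over-count), and verify the indices in the two binomials are exactly $j_2+(i_1-j_1-1)-(N-t_1)$ and $j_2-(N-t_1)$. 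I would also need to double-check the summation limits: the lower limit $\max\{1,j_1-j_2+N-t_1+1\}$ on $i_1$ comes precisely from requiring the argument of the first (positive) binomial to be nonnegative, i.e. the relocated point still fits, and this is the place where the hypothesis $j_2 \ge N-t_1+1$ resurfaces to keep the sum nonempty. Once the region is correctly identified, the rest is assembling the four multiplicative pieces and checking they telescope into the stated product; that bookkeeping is routine but must be done carefully because three length parameters ($N$, $t_1$, $t_2$) and two value parameters ($j_1$, $j_2$) are all in play at once.
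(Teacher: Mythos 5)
Your plan shares the paper's overall architecture (two applications of the Insert decomposition followed by a ballot-type lattice-path count), but both halves have genuine gaps as written. For part (a), the two facts you lean on are not consequences of $312$-avoidance: it is false that $a<b$ with $\sigma_a<\sigma_b$ forces $\sigma_c\notin(\sigma_a,\sigma_b)$ for $a<c<b$ (such a triple is a $123$ or $213$ pattern, which is allowed), and it is not forced that every $k\in(N-t_1,N-t_2)$ satisfies $\sigma_k>j_2$. The correct mechanism looks at positions to the \emph{left} of $N-t_1$: if $\sigma_i>j_2$ for some $i<N-t_1$, then $\sigma_i\,\sigma_{N-t_1}\,\sigma_{N-t_2}$ is a $312$; hence all $N-j_2$ values in $(j_2,N]$ must occupy the $t_1-1$ positions of $(N-t_1,N]\setminus\{N-t_2\}$, giving $N-j_2\le t_1-1$, i.e.\ $j_2\ge N-t_1+1$. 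Your pigeonhole instinct is right, but the argument you sketch does not go through.

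For part (b) there are two problems. First, your description of the decomposition at $(N-t_1,j_1)$ has the roles of the two factors backwards: in $\phi_{\bullet;N,t_1,j_1}$ the free factor $S_{N-t_1-i_1}(312)$ is the pattern of $\sigma_{i_1},\ldots,\sigma_{N-t_1-1}$ (positions strictly to the left of $N-t_1$), while all positions $\ge N-t_1$ --- including the forced point $(N-t_2,j_2)$ --- land inside the $S^{\Box}(t_1+i_1,i_1,j_1)$ factor. So the second constraint must be tracked inside the staircase factor, not inside the Catalan factor, and that bookkeeping (together with the resulting range of $i_1$, which is $\max\{1,j_1-j_2+N-t_1+1\}\le i_1\le j_1$, not $\max\{1,j_1-t_1\}\le i_1\le j_1$) is exactly where the work lies. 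Second, and decisively, the heart of the theorem --- the difference of binomials --- is left as an intention rather than proved. One must show that after both peelings the remaining core is a $312$-avoiding permutation in which \emph{both} relocated forced entries are left-to-right maxima (this is where $j_2\ge N-t_1+1$ and the inequality $i_1>j_1-j_2+N-t_1$ are genuinely used; it is the step you flag with ``one must be careful that \dots it is an $S^{\Box}$-type constraint'' but do not establish), and then count that set. The paper does this via Krattenthaler's bijection: the two left-to-right maxima become two prescribed peaks of a Dyck path, the count factors into three Dyck-segment counts, and the middle segment between the peaks is evaluated by the reflection formula of Lemma \ref{CardDyckseg1}, which is what produces the stated binomial difference with those exact indices. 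You correctly anticipate a reflection/Andr\'e-type count, but without the left-to-right-maximum claim and an explicit lattice-path model whose boundaries yield those entries, the proposal does not yet constitute a proof.
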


\begin{thm}\label{propapprox3}
Fix $0< \theta < \frac{1}{6}$.	Then, 
	\begin{align*}
		&P_N(S^{\nearrow}(N))=\\
&\frac{N^{-3}}{4\pi\left(1-\frac{(N-t_1-j_1)+(N-t_2-j_2)}{N}\right)^{3/2}\left(\frac{N-t_1-j_1}{N}\right)^{3/2}\left(\frac{N-t_2-j_2}{N}\right)^{3/2}}
		(1+O(N^{3\theta-\frac{1}{2}}))
	\end{align*}
for $0\pprec j_1 \pprec N-t_1 \pprec j_2 \pprec N-t_2 \pprec N$.
\end{thm}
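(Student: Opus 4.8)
\noindent\textit{Proof strategy.}
The plan is to start from the exact double-sum formula for $\left|S^{\nearrow}(N)\right|$ in Theorem~\ref{propS_C(N)}(b) and extract its large-$N$ behaviour by the same Stirling-plus-Laplace method that underlies the proofs of Theorems~\ref{propapprox1} and~\ref{propapprox2}, dividing the result by $\left|S_N(312)\right| = C_N \sim 4^N/(\sqrt{\pi}\,N^{3/2})$ at the end. (When $j_2 < N-t_1+1$ the probability is $0$ by Theorem~\ref{propS_C(N)}(a), but that case is excluded by the hypothesis $N-t_1 \pprec j_2$.) First I would apply Stirling's formula $n! = \sqrt{2\pi n}\,(n/e)^n(1+O(1/n))$ to every factorial occurring in the Catalan numbers $C_{N-t_1-i_1}$, $C_{N-t_2-i_2}$, in the binomial coefficients $\binom{j_1+i_1-2}{j_1-1}$ and $\binom{i_2+2t_2-j_2}{t_2}$, and in the two binomials of the difference. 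This rewrites each summand as a slowly varying algebraic prefactor times an exponential factor of the form $e^{N\Phi(i_1/N,\,i_2/N)}$, with a relative error controlled uniformly over the summation rectangle.

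Next I would locate the point $(i_1^{*},i_2^{*})$ maximizing the exponential factor over the summation region, use the hypothesis $0\pprec j_1 \pprec N-t_1 \pprec j_2 \pprec N-t_2 \pprec N$ to check that it sits at macroscopic distance from the relevant parts of the boundary (so that all arguments of the factorials at the maximizer are of order $N$), and compute the Hessian of $\Phi$ there. Splitting the double sum into a central box of side $N^{1/2+\theta}$ around $(i_1^{*},i_2^{*})$ and its complement, the complement contributes only $e^{-cN^{2\theta}}$ relative to the main term, while inside the box the second-order Taylor expansion of the logarithm of the summand has cubic remainder of size $O(N^{3\theta-1/2})$; this is exactly where $\theta<\tfrac16$ is used. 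Summing the resulting Gaussian over the lattice points of the box and comparing with the corresponding two-dimensional Gaussian integral produces a factor proportional to $\big(\det \mathrm{Hess}\,\Phi\big)^{-1/2}$, and combining this with the prefactor evaluated at $(i_1^{*},i_2^{*})$, simplifying, and dividing by $C_N$ should yield the stated expression in terms of $v=(N-t_1-j_1)/N$, $w=(N-t_2-j_2)/N$ and $1-v-w$ (which is positive since $j_2\ge N-t_1+1$).

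The step I expect to be the main obstacle is the treatment of the difference of binomial coefficients $\binom{A}{B_1}-\binom{A}{B_2}$ in Theorem~\ref{propS_C(N)}(b); there is no analogue of this in the proofs of Theorems~\ref{propapprox1} and~\ref{propapprox2}, where every summand is a product of Catalan numbers and single binomials. This difference is a reflection-principle (ballot-type) quantity that encodes the $312$-avoidance constraint \emph{between} the two marked points --- indeed it is what makes $\left|S^{\nearrow}(N)\right|$ vanish in part~(a) --- so it cannot simply be bounded above by one of its terms and discarded. I would need to determine, at and near $(i_1^{*},i_2^{*})$, the sizes of $B_1$ and of $B_2-B_1 = j_1+1-i_1$ relative to $A$, and replace the difference by the appropriate asymptotic: by $\binom{A}{B_1}(1+o(1))$ if the reflected index lies at macroscopic distance from $A/2$, and otherwise by a genuine ballot-number estimate whose order --- a power-of-$N$ correction to $\binom{A}{B_1}$ --- controls the final power of $N$ in the answer. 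Verifying that this replacement is valid uniformly over the summation box (not merely at the single point $(i_1^{*},i_2^{*})$), and that it reproduces exactly the $N^{-3}$ order and the symmetric $\left(uvw\right)^{-3/2}$ shape of the answer, is the delicate part; once it is in place, the remaining estimates are the routine Laplace computation described above, carried out uniformly over the stated range of $(j_1,j_2,t_1,t_2)$ just as in Theorem~\ref{propapprox2}.
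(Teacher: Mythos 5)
Your overall framework (exact formula from Theorem~\ref{propS_C(N)}(b), Stirling/local-CLT approximation, truncation to a box of side $N^{1/2+\theta}$, comparison with an integral) is in the right family, but the central localization step is wrong, and it is precisely the step on which your Hessian formula rests. If you normalize the summand by powers of $2$ you find that the pure exponential factor is \emph{constant} in $(i_1,i_2)$: the product of $4^{N-t_1-i_1}$, $4^{N-t_2-i_2}$, $2^{j_1+i_1-2}$, $2^{i_2+2t_2-j_2}$ and $2^{i_1+i_2+j_2-j_1-2(N-t_1+1)}$ equals $2^{2N-4}$ identically. What remains is a product of normalized binomials whose entropy deficits vanish exactly when each lower index sits at the center of its upper index, i.e.\ when $i_1=j_1$, $i_2=j_2$ (and $r_1=r_2$ for the third binomial, where $r_1=j_1-i_1+1$, $r_2=j_2-i_2+1$). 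Hence the summand is maximized at distance $O(\sqrt N)$ from the \emph{corner} $(i_1,i_2)=(j_1,j_2)$ of the summation rectangle, not at an interior critical point; your check that the maximizer ``sits at macroscopic distance from the boundary, so that all arguments of the factorials are of order $N$'' fails, since $j_1-i_1+1$ and $j_2-i_2+1$ are only $O(N^{1/2+\theta})$ throughout the dominant region. Because the location of the maximum and the Gaussian width are of the same order $\sqrt N$, the ``value at the maximum times $(\det \mathrm{Hess}\,\Phi)^{-1/2}$'' formula is not the right limit object: the prefactor $(j_1-i_1+1)(j_2-i_2+1)=r_1r_2$ vanishes at the corner, and the constrained quadrant geometry matters. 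The correct limit (this is how the paper proceeds, after substituting $r_1,r_2$ and writing everything in terms of $h$ and $\gamma$, truncating at $R_N=\lceil N^{1/2+\theta}\rceil$, and invoking Lemma~\ref{lem-gamas} and Proposition~\ref{thmapprox2}) is the quadrant integral
\begin{equation*}
\int_{0}^{\infty}\!\!\int_{0}^{\infty} xy\,e^{-K_1x^2}e^{-K_2y^2}\left(e^{-K_3(y-x)^2}-e^{-K_3(x+y)^2}\right)dx\,dy
\;=\;\frac{\pi K_3}{4\,(K_1K_2+K_1K_3+K_2K_3)^{3/2}},
\end{equation*}
evaluated in Lemma~\ref{doubleintegral}, with $K_1,K_2,K_3$ built from $j_1/N$, $t_2/N$, $(j_2+t_1-N)/N$; this is not expressible as a full-plane Gaussian volume around an interior peak.

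The same misdiagnosis affects your treatment of the binomial difference. In the dominant region both lower indices $B_1$ and $B_2$ lie within $O(N^{1/2+\theta})$ of $A/2$ (indeed $B_1-A/2\approx(r_2-r_1)/2$ and $B_2-A/2\approx(r_1+r_2)/2$), so neither branch of your dichotomy applies: the two binomials are of the same order and genuinely cancel, the difference behaving like $e^{-(r_2-r_1)^2/4(j_2+t_1-N)}-e^{-(r_1+r_2)^2/4(j_2+t_1-N)}$, which for $r_1r_2\ll N$ is smaller than each term by a factor $\asymp r_1r_2/(j_2+t_1-N)$. This cancellation supplies one of the factors of $r_1r_2$ that makes the answer $N^{-3}$ rather than $N^{-2}$, and it must be tracked with relative-error care, as in the paper's observation that $A_N(1+O(N^{-\xi}))-B_N(1+O(N^{-\xi}))=A_N-B_N+\max\{A_N,B_N\}O(N^{-\xi})$; replacing the difference by its larger term ``up to $1+o(1)$'' would destroy exactly the quantity you need. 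So the proposal as written has a genuine gap: the interior-maximizer Laplace picture, and the resulting $(\det\mathrm{Hess})^{-1/2}$ constant, do not apply to this sum, and the argument must instead be organized around the corner region with the quadrant integral above.
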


 The next corollary restates  Theorem \ref{propapprox1} for the special case that $j= \left\lfloor \alpha N \right\rfloor$, $i=N-t= \left\lfloor  \beta N \right\rfloor$.  Corollary  \ref{cor2} 
contains the analogous statements for Theorems
 \ref{propapprox2} and \ref{propapprox3}.
Corollary \ref{cor2} also frames these results in terms of a random field corresponding 
to points in the graph of a random $\sigma$.

\begin{cor}\label{cor1}
Assume that $0 <\alpha <\beta < 1$. Then
	\begin{align*}
		P_N(S^{\bullet}(N,\left\lfloor  \beta N \right\rfloor,\left\lfloor  \alpha N \right\rfloor))\;\sim\;
		  \frac{N^{-3/2}}{2\sqrt{\pi}(1-( \beta- \alpha))^{3/2}(\beta - \alpha)^{3/2}}.
	\end{align*}
\end{cor}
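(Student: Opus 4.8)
The plan is to derive Corollary~\ref{cor1} directly from Theorem~\ref{propapprox1} by making the substitution $j=\lfloor\alpha N\rfloor$, $i=N-t=\lfloor\beta N\rfloor$, and then passing to the limit $N\to\infty$.

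First I would verify that this choice of $i$ and $j$ satisfies the hypothesis $0\pprec j\pprec i\pprec N$ of Theorem~\ref{propapprox1}. Since $0<\alpha<\beta<1$, we have $j/N\to\alpha$ and $i/N\to\beta$, so $j-0\sim\alpha N$, $i-j=\lfloor\beta N\rfloor-\lfloor\alpha N\rfloor\sim(\beta-\alpha)N$, and $N-i\sim(1-\beta)N$; each of these three differences exceeds $\epsilon N$ for a suitable fixed $\epsilon>0$ once $N$ is large, which is exactly the meaning of $0\pprec j\pprec i\pprec N$. Hence Theorem~\ref{propapprox1} applies, say with any fixed $\theta\in(0,\tfrac16)$.

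Next I would evaluate the quantities appearing in the approximation. We have
\[
 \frac{N-t-j}{N}=\frac{i-j}{N}=\frac{\lfloor\beta N\rfloor-\lfloor\alpha N\rfloor}{N}\longrightarrow\beta-\alpha ,
\]
and therefore $1-\frac{N-t-j}{N}\to1-(\beta-\alpha)$; note both limits lie strictly in $(0,1)$ by the assumption $0<\alpha<\beta<1$, so raising to the power $\tfrac32$ and taking reciprocals is harmless. Moreover, fixing $\theta<\tfrac16$ makes the exponent $3\theta-\tfrac12$ negative, so the error factor satisfies $1+O(N^{3\theta-1/2})\to1$. Combining these observations, the right-hand side of Theorem~\ref{propapprox1} equals
\[
 \frac{N^{-3/2}}{2\sqrt{\pi}\,(1-(\beta-\alpha))^{3/2}(\beta-\alpha)^{3/2}}\bigl(1+o(1)\bigr),
\]
which is precisely the asserted relation $P_N(S^{\bullet}(N,\lfloor\beta N\rfloor,\lfloor\alpha N\rfloor))\sim\frac{N^{-3/2}}{2\sqrt{\pi}(1-(\beta-\alpha))^{3/2}(\beta-\alpha)^{3/2}}$.

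Since the argument is a routine specialization of Theorem~\ref{propapprox1}, there is no serious difficulty; the only point requiring any care — and the closest thing to an obstacle — is confirming that the floor functions introduce only $O(1/N)$ perturbations in the base $\frac{i-j}{N}$ (absorbed by the $\sim$ relation) and that the $\pprec$ hypotheses genuinely hold uniformly for the chosen $i,j$, which is what the paragraph above checks.
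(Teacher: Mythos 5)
Your proposal is correct and matches the paper's treatment: Corollary~\ref{cor1} is presented there as a direct restatement of Theorem~\ref{propapprox1} with $j=\lfloor\alpha N\rfloor$, $i=N-t=\lfloor\beta N\rfloor$, and your verification of the $\pprec$ hypotheses, the convergence of $\frac{N-t-j}{N}$ to $\beta-\alpha$, and the vanishing of the $O(N^{3\theta-1/2})$ error is exactly the routine specialization intended. Nothing is missing.
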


\begin{cor}\label{cor2}
Let  $i_1= \left\lfloor \beta_1 N \right\rfloor, i_2=\left\lfloor  \beta_2 N \right\rfloor, j_1= \left\lfloor \alpha_1 N \right\rfloor, j_2= \left\lfloor  \alpha_2 N \right\rfloor$. 
Also let $\Delta_1=\beta_1-\alpha_1$ and $\Delta_2=\beta_2-\alpha_2$.  
For a random  $\sigma$ having distribution $P_N$, let $Z(i,j)$ be the indicator 
of the event that $\sigma_i=j$, and let {\em Cov}$_N$ denote covariance with respect to $P_N$.
\\
(a) 
Assume that $0<\alpha_2<\alpha_1 <\beta_1<\beta_2< 1$. Then 	
\begin{align*}
		&P_N\left(S^{\searrow}(N)\right)\;\sim\;
        \frac{1}{4\pi}\frac{N^{-3}}{[(\Delta_2-\Delta_1)\Delta_1(1-\Delta_2)]^{3/2}}
        \hspace{8mm}\hbox{and}   \\
   \lim_{N\rightarrow\infty} & N^3 \,\textrm{\em Cov}_N(Z(i_1,j_1),Z(i_2,j_2))  \;=\;
    \frac{   \left(  \left[\frac{\Delta_2(1-\Delta_1)}{\Delta_2-\Delta_1} \right]^{3/2}-1\right)
    }{4\pi \,[\Delta_1\Delta_2(1-\Delta_1)(1-\Delta_2)]^{3/2}}
      \;>\; 0\,.
	\end{align*}
(b) 
Assume that $0 <\alpha_1 < \beta_1 <\alpha_2 < \beta_2 <1$. Then 
\begin{align*}
&P_N\left(S^{\nearrow}(N)\right)\;\sim\; \frac{1}{4\pi}\frac{N^{-3}}{[(1-\Delta_1-\Delta_2)
\Delta_1\Delta_2]^{3/2}}    
     \hspace{8mm}\hbox{and}   \\
   \lim_{N\rightarrow\infty} & N^3 \,\textrm{\em Cov}_N(Z(i_1,j_1),Z(i_2,j_2))  \;=\;
    \frac{   \left(  \left[\frac{(1-\Delta_1)(1-\Delta_2)}{1-\Delta_1-\Delta_2} \right]^{3/2}-1\right)
    }{4\pi \,[\Delta_1\Delta_2(1-\Delta_1)(1-\Delta_2)]^{3/2}}
      \;>\; 0\,.
\end{align*}
(c)  Assume that $0 <\alpha_1 < \alpha_2 <\beta_1 < \beta_2 <1$.  Then 
$P_N\left(S^{\nearrow}(N)\right)\,=\,0$ and 
\begin{equation}
  \lim_{N\rightarrow\infty} N^3 \,\textrm{\em Cov}_N(Z(i_1,j_1),Z(i_2,j_2))  \;=\;
    \frac{  -1
    }{4\pi \,[\Delta_1\Delta_2(1-\Delta_1)(1-\Delta_2)]^{3/2}}
      \;<\; 0\,.
\end{equation}
\end{cor}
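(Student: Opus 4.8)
The plan is to derive all three parts of Corollary~\ref{cor2} directly from Corollary~\ref{cor1} and Theorems~\ref{propapprox2} and~\ref{propapprox3} (together with Theorem~\ref{propS_C(N)}(a)), by substituting $i_k=\lfloor\beta_k N\rfloor$, $j_k=\lfloor\alpha_k N\rfloor$ and letting $N\to\infty$. The first step is pure bookkeeping. Writing $t_k=N-i_k$ gives $N-t_k-j_k=i_k-j_k$, so $(N-t_k-j_k)/N\to\beta_k-\alpha_k=\Delta_k$; likewise $\big((N-t_2-j_2)-(N-t_1-j_1)\big)/N\to\Delta_2-\Delta_1$, $\big((N-t_1-j_1)+(N-t_2-j_2)\big)/N\to\Delta_1+\Delta_2$, and $1-(N-t_2-j_2)/N\to 1-\Delta_2$. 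Since the coefficients in Theorems~\ref{propapprox2} and~\ref{propapprox3} are continuous functions of these ratios, replacing $\lfloor\alpha_k N\rfloor,\lfloor\beta_k N\rfloor$ by $\alpha_k N,\beta_k N$ changes nothing in the limit, and the relative errors $O(N^{3\theta-1/2})$ vanish.

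Next I match each ordering to the correct event and check hypotheses. In~(a), $\alpha_2<\alpha_1<\beta_1<\beta_2$ means $j_2<j_1<i_1<i_2$, so $\{\sigma_{i_1}=j_1,\,\sigma_{i_2}=j_2\}=S^{\searrow}(N)$ with $k=2$; the strict inequalities give $0\pprec j_2\pprec j_1\pprec N-t_1\pprec N-t_2\pprec N$ and $\Delta_2-\Delta_1=(\beta_2-\beta_1)+(\alpha_1-\alpha_2)>0$, so Theorem~\ref{propapprox2} applies and gives the stated asymptotic for $P_N(S^{\searrow}(N))$. In~(b), $\alpha_1<\beta_1<\alpha_2<\beta_2$ means $j_1<i_1<j_2<i_2$, so the event is $S^{\nearrow}(N)$ with $j_2\geq N-t_1+1$ for large $N$, and $\Delta_1+\Delta_2<\beta_2-\alpha_1<1$; Theorem~\ref{propapprox3} then gives the asymptotic for $P_N(S^{\nearrow}(N))$. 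In~(c), $\alpha_1<\alpha_2<\beta_1<\beta_2$ means $j_1<j_2<i_1<i_2$, and $\alpha_2<\beta_1$ forces $j_2<N-t_1+1$ for all large $N$, so Theorem~\ref{propS_C(N)}(a) gives $\left|S^{\nearrow}(N)\right|=0$ and hence $P_N(S^{\nearrow}(N))=0$.

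For the covariances I use $\mathrm{Cov}_N(Z(i_1,j_1),Z(i_2,j_2))=P_N(\sigma_{i_1}=j_1,\,\sigma_{i_2}=j_2)-P_N(\sigma_{i_1}=j_1)\,P_N(\sigma_{i_2}=j_2)$. The joint probability is the quantity just computed, while $N^{3/2}P_N(S^{\bullet}(N,i_k,j_k))\to\big(2\sqrt{\pi}\,[(1-\Delta_k)\Delta_k]^{3/2}\big)^{-1}$ by Corollary~\ref{cor1}. Multiplying by $N^3$, the product of the two marginals tends to $\big(4\pi\,[\Delta_1\Delta_2(1-\Delta_1)(1-\Delta_2)]^{3/2}\big)^{-1}$, which is precisely the common denominator in all three displayed limits; the joint part tends to its leading coefficient $a$, namely $\big(4\pi[(\Delta_2-\Delta_1)\Delta_1(1-\Delta_2)]^{3/2}\big)^{-1}$ in~(a), $\big(4\pi[(1-\Delta_1-\Delta_2)\Delta_1\Delta_2]^{3/2}\big)^{-1}$ in~(b), and $0$ in~(c). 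Factoring the common denominator out of $a-\big(4\pi[\Delta_1\Delta_2(1-\Delta_1)(1-\Delta_2)]^{3/2}\big)^{-1}$ yields the factor $[\Delta_2(1-\Delta_1)/(\Delta_2-\Delta_1)]^{3/2}-1$ in~(a), $[(1-\Delta_1)(1-\Delta_2)/(1-\Delta_1-\Delta_2)]^{3/2}-1$ in~(b), and $-1$ in~(c). The sign claims then reduce to $\Delta_2(1-\Delta_1)-(\Delta_2-\Delta_1)=\Delta_1(1-\Delta_2)>0$ in~(a), to $(1-\Delta_1)(1-\Delta_2)-(1-\Delta_1-\Delta_2)=\Delta_1\Delta_2>0$ in~(b), and to $-1<0$ in~(c).

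The only point needing care is this last one: in~(a) and~(b) one must be sure that the leading $N^{-3}$ term of the joint probability does not exactly cancel the leading term of the product of the marginals, i.e.\ that $a\neq\big(4\pi[\Delta_1\Delta_2(1-\Delta_1)(1-\Delta_2)]^{3/2}\big)^{-1}$; but this is exactly the two positivity inequalities just displayed, so it follows from elementary algebra. Everything else is routine substitution into the already-established asymptotic formulas.
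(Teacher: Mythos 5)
Your proposal is correct and follows exactly the route the paper intends: the corollary is presented there as a restatement of Theorems \ref{propapprox1}--\ref{propapprox3} (together with Theorem \ref{propS_C(N)}(a)) under the substitution $i_k=\lfloor\beta_k N\rfloor$, $j_k=\lfloor\alpha_k N\rfloor$, with the covariance limits obtained by subtracting the product of the one-point asymptotics, just as you do. Your identification of the orderings with $S^{\searrow}$, $S^{\nearrow}$, the verification of the $\pprec$ hypotheses, and the algebraic simplification and sign checks (via $\Delta_1(1-\Delta_2)>0$ and $\Delta_1\Delta_2>0$) match the intended argument.
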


The above asymptotic results hold for points well below the diagonal and well away from 
the sides of the square $[1,N]^2$.  

The next results concern the lower right corner of 
the square.  Our starting point is Proposition \ref{k-1-case}, of which part (\textit{a}) 
has also been observed by Miner and Pak \cite{pak}.

\begin{defi}
   \label{def-rho}
For all $a,b \in \mathbb{N}$  define 
\[
\rho(a,b) 
\; = \; \sum_{i_0=\max \left\{1,b-a+1\right\}}^{b} \frac{(b-i_0+1)^2}{ba 4^{i_0+a-1}} \binom{i_0+2(a-1)-b}{a-1} \binom{i_0+b-2}{b-1} .
\]
\end{defi}
\noindent
We note that we can also write $\rho(a,b)\,=\,  \sum_{i=\max \left\{1,b-a+1\right\}}^{b} 
| S^{\Box}(i+a-1,i,b) |/4^{i+a-1}$ (recall Definition \ref{def-SNij}
and Remark \ref{remMadras}).

\begin{prop}\label{k-1-case}
(a)  Fix $a, b \in  \mathbb{N}$. Then
\begin{align*}
\lim_{N \rightarrow \infty}
 P_N(S^{\bullet}(N,N{-}a{+}1,b) ) \;=\;  \rho(a,b).
\end{align*}   
(b) More generally, let $k\in \mathbb{N}$ and let 
$a_1,\ldots, a_{k}, b_1, \ldots b_{k} \in \mathbb{N}$.  Define  
$A_m= \sum_{l=1}^{m} a_l$, $B_m = \sum_{l=1}^{m} b_l$ for $m= 1,\ldots, k$. Then
\[
 P_N(S^{\searrow k}(N,N{-}A_{k}{+}1,\ldots,N{-}A_{1}{+}1,B_{k},\ldots,B_{1})
 ) \;= \; \left[ \prod_{v=1}^k\rho(a_v, b_{v}) \right] (1+O(N^{-1})).
\]
\end{prop}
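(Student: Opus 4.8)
\textbf{Proof proposal for Proposition \ref{k-1-case}.}

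The plan is to derive both parts from the exact formulas in Theorems \ref{propS_A(N,N-t,j)} and \ref{propS_B(N)} together with the standard Catalan asymptotics $C_m \sim 4^m/(\sqrt{\pi}\,m^{3/2})$ and $|S_N(312)| = C_N$. For part (\textit{a}), I would start from Theorem \ref{propS_A(N,N-t,j)} with $t = a-1$ and $j = b$, so that $i = N-t = N-a+1$ is within a bounded distance of $N$. Dividing by $|S_N(312)| = C_N$ gives
\[
P_N(S^{\bullet}(N,N{-}a{+}1,b)) \;=\; \frac{1}{C_N}\sum_{i_0=\max\{1,b-a+1\}}^{b} C_{N-a+1-i_0}\,\frac{(b-i_0+1)^2}{b(a)}\binom{i_0+2(a-1)-b}{a-1}\binom{i_0+b-2}{b-1}.
\]
Here the summation index $i_0$ ranges over a \emph{fixed} finite set (independent of $N$), and every factor except the Catalan numbers is a constant. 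So the only $N$-dependence sits in the ratio $C_{N-a+1-i_0}/C_N$. Using $C_{N-c}/C_N \to 4^{-c}$ as $N\to\infty$ for each fixed $c = a-1+i_0$, the limit of each term is exactly $\frac{(b-i_0+1)^2}{ba\,4^{i_0+a-1}}\binom{i_0+2(a-1)-b}{a-1}\binom{i_0+b-2}{b-1}$, and summing over the finitely many $i_0$ reproduces $\rho(a,b)$ by Definition \ref{def-rho}. (The alternate description $\rho(a,b)=\sum_i |S^{\Box}(i+a-1,i,b)|/4^{i+a-1}$ noted after the definition is a useful sanity check on the bookkeeping.)

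For part (\textit{b}) I would iterate the recursion in Theorem \ref{propS_B(N)}. Writing $t_v = A_v - 1$ and $j_v = B_v$ so that $N - t_v = N - A_v + 1$, observe that the recursion peels off one point at a time: the outer sum over $i := i_k$ runs over $\max\{1, B_k - A_k + 1\} \le i \le B_k$, again a bounded set, with weight $\frac{(B_k-i+1)^2}{B_k A_k}\binom{i+2(A_k-1)-B_k}{A_k-1}\binom{i+B_k-2}{B_k-1}$, multiplied by $|S^{\searrow_{k-1}}|$ evaluated at $N' = N - t_k - i = N - A_k - i + 1$ with shifted parameters. A short check shows that in these shifted parameters, after re-indexing, the "distance from the right edge" parameters are again $O(1)$ and the "height" parameters are again the partial sums $B_{k-1}, \ldots, B_1$ shifted by $-B_k$, which differ from each other exactly by $b_{k-1}, \ldots, b_1$ — so the inductive structure is preserved. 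By induction on $k$, $|S^{\searrow_{k-1}}(N')| / C_{N'} \to \prod_{v=1}^{k-1}\rho(a_v,b_v)$, and since $C_{N'}/C_N \to 4^{-(A_k - 1 + i)}$ as $N\to\infty$, each term of the outer sum converges, and the finite sum over $i$ collapses (by Definition \ref{def-rho} applied with $a = a_k$, $b = b_k$) to $\rho(a_k,b_k)\prod_{v=1}^{k-1}\rho(a_v,b_v)$. To upgrade the limit statement to the claimed $(1+O(N^{-1}))$ rate, I would use the sharper expansion $C_{N-c} = 4^{-c} C_N (1 + O(1/N))$ uniformly over the bounded range of $c$ that occurs, propagate this relative error through the finitely many multiplications and the finite sum (a bounded number of $(1+O(1/N))$ factors multiply to $1+O(1/N)$), and invoke the induction hypothesis's own $O(N^{-1})$ error for $|S^{\searrow_{k-1}}(N')|$; since $N' = N - O(1)$, that error is also $O(N^{-1})$ in terms of $N$.

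The main obstacle, and the only step requiring genuine care rather than bookkeeping, is making the error term uniform and tracking that the shifted parameters in the recursion stay in the regime where all the "edge distances" ($A_v - 1 + \text{(inner indices)}$) remain bounded independently of $N$. Concretely one must verify that at each stage of the peeling the quantities playing the role of $t_v$ in the sub-problem are still $O(1)$, so that the Catalan ratios behave like $4^{-O(1)}$ and the finitely-many-terms structure persists; this is where one checks that the re-indexing $N - t_k - i \mapsto$ new $N$, together with the parameter shifts $N - t_v \mapsto N - t_v - i + 1$ and $j_v \mapsto j_v - j_k$, does not secretly introduce an $N$-dependent number of summands. Once that is pinned down, everything else is the elementary asymptotics of Catalan ratios and a routine induction. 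I expect this verification to be short but worth spelling out, since it is exactly the point where the "corner" nature of the result (all points within $O(1)$ of the lower-right corner) is used.
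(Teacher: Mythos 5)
Your part (a) is correct and is exactly the paper's argument: substitute $t=a-1$, $j=b$ into Theorem \ref{propS_A(N,N-t,j)}, divide by $C_N$, and use $C_{N-c}=4^{-c}C_N(1+O(1/N))$ uniformly over the bounded range of $c=a-1+i_0$; the finite sum is $\rho(a,b)$ by Definition \ref{def-rho}. Your strategy for part (b) (induct on $k$ through the recursion of Theorem \ref{propS_B(N)}, with all Catalan shifts bounded) is also the paper's, but your execution of the key step fails because the parameter matching is backwards. In the Proposition the arguments of $S^{\searrow k}$ are listed in the order $(N,N-t_1,\ldots,N-t_k,j_1,\ldots,j_k)$, so the correct identification is $t_v=A_{k-v+1}-1$ and $j_v=B_{k-v+1}$, not $t_v=A_v-1$, $j_v=B_v$ (your choice violates the required ordering $j_k<\cdots<j_1<N-t_1<\cdots<N-t_k$). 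The recursion in Theorem \ref{propS_B(N)} peels off the point $(N-t_k,j_k)$, i.e.\ the point \emph{closest to the corner}, whose parameters are $(A_1,B_1)=(a_1,b_1)$; its weight is $\bigl|S^{\Box}(i+a_1-1,i,b_1)\bigr|$, which together with $C_{\tilde N}/C_N=4^{-(a_1+i-1)}(1+O(N^{-1}))$ collapses, via the identity $\rho(a,b)=\sum_i \bigl|S^{\Box}(i+a-1,i,b)\bigr|/4^{i+a-1}$, to $\rho(a_1,b_1)(1+O(N^{-1}))$, while the residual configuration lives at $\tilde N=N-a_1-i+1$ with parameters $A_m-A_1$, $B_m-B_1$, exactly the partial sums of $(a_2,b_2),\ldots,(a_k,b_k)$, so the induction hypothesis applies.

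As you wrote it, the outer sum instead carries the weight $\frac{(B_k-i+1)^2}{B_kA_k}\binom{i+2(A_k-1)-B_k}{A_k-1}\binom{i+B_k-2}{B_k-1}$ with $i\le B_k$; combined with the Catalan ratio this would collapse to $\rho(A_k,B_k)$, not to $\rho(a_k,b_k)$ as you claim, and these differ for $k\ge 2$. Worse, your shifted parameters are not a legitimate instance of $S^{\searrow_{k-1}}$: the heights $B_v-B_k$ (for $v<k$) are negative, and the shifted positions $N-A_v-i+2$ exceed the new permutation length $N-A_k-i+1$, so the object you hand to the induction hypothesis does not exist. This is precisely the re-indexing check you yourself flagged as the one step needing genuine care, and as stated it fails. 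Once the identification is corrected -- peel off $(a_1,b_1)$ and keep $(a_2,b_2),\ldots,(a_k,b_k)$ -- your outline coincides with the paper's proof, including the $O(N^{-1})$ error propagation, since all shifts are $O(1)$ and $\tilde N=N-O(1)$.
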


By Remark \ref{Bullet}, we also observe that $\rho$ is symmetric in $a$ and $b$. 

Our next task is to expand part (b) above into a more complete limiting
description of $\sigma$ near the lower right corner of the square $[1,N]^2$.
Since we consider $N\rightarrow\infty$, we shall translate the lower right corner
so that the square expands to fill the second quadrant.

\begin{defi}
    \label{defi-Q}
Let $Q := \{(-i,j): i,j\in \mathbb{N}\}$ be the integer points inside the second quadrant.
For $N\in \mathbb{N}$, let $W_N :=[-N,-1]\times [1,N]$ be the $N\times N$ 
square in the lower right corner of $Q$.
For each $N$, define the collection of (dependent) binary 
random variables $\{X^N_{(-i,j)}: (-i,j)\in Q\}$ by
\[     X^N_{(-i,j)} \;=\;   \begin{cases}   1 & \mbox{if $(-i,j)\in W_N$ and $\sigma_{N-i+1}=j$ } \\
    0 & \mbox{otherwise}   \end{cases}
\]
where $\sigma$ has the distribution $P_N$.  We will often write ``$q$'' (or sometimes ``$r$''
or ``$s$'') to represent a generic
element $(-i,j)$ of $Q$; e.g.\ we can refer to the above collection as $\{X^N_q:q\in Q\}$.
\end{defi}

\setlength{\unitlength}{0.6mm}
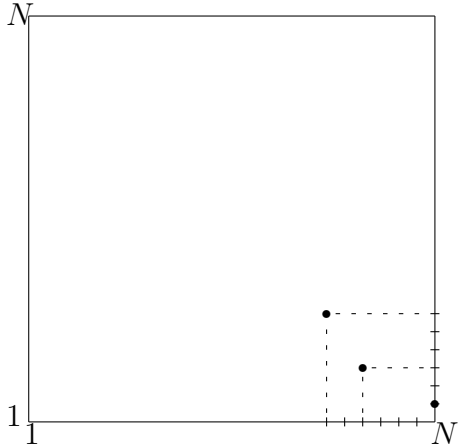
\begin{figure}
\begin{center}
\begin{picture}(100,100)
\put(5,5){\line(1,0){90}}
\put(5,5){\line(0,1){90}}
\put(5,95){\line(1,0){90}}
\put(95,5){\line(0,1){90}}
\put(95,9){\circle*{2}}
\put(79,17){\circle*{2}}
\put(71,29){\circle*{2}}
\multiput(73,29)(3.5,0){6}{\line(1,0){1}}
\multiput(71,7)(0,3.5){6}{\line(0,1){1}}
\multiput(81,17)(3.5,0){4}{\line(1,0){1}}
\multiput(79,7)(0,3.5){3}{\line(0,1){1}}
\put(91,4){\line(0,1){2}}
\put(87,4){\line(0,1){2}}
\put(83,4){\line(0,1){2}}
\put(79,4){\line(0,1){2}}
\put(75,4){\line(0,1){2}}
\put(71,4){\line(0,1){2}}
\put(94,9){\line(1,0){2}}
\put(94,13){\line(1,0){2}}
\put(94,17){\line(1,0){2}}
\put(94,21){\line(1,0){2}}
\put(94,25){\line(1,0){2}}
\put(94,29){\line(1,0){2}}
\put(4,0){1}
\put(0,4){1}
\put(94,0){$N$}
\put(0,93){$N$}
\end{picture}
\end{center}
\caption{The solid circles show three points of the graph of a permutation in 
$S^{{\searrow}3}(N,N{-}4,N{-}6,2,4,7)$.  Such a permutation gives the values
$X^N_{(-1,2)}=1=X^N_{(-5,4)}=X^N_{(-7,7)}$ (and hence $X^N_{(-7,j)}=0$ for all 
$j\neq 7$, etc.).   
\label{fig:X}
}
\end{figure}

See Figure \ref{fig:X}.
Note that the random set $\{q\in Q:  X^N_q=1\}$ is essentially the graph of 
the random permutation $\sigma$.

With the above definition, Proposition \ref{k-1-case}(b) concludes that 
the probability that $X^N_{(-A_m,B_m)}=1$ for every $m=1,\ldots,k$ converges to 
$\rho(a_1,\ldots,b_k)$ as $N\rightarrow\infty$.  Our next result builds on this to show
that there is a limiting collection of random variables indexed by points of $Q$.

\begin{thm}
   \label{thm-limX}
There exists a collection of $\{0,1\}$-valued random variables $\{X_{q}:q\in Q\}$ (with joint
distribution $P_{\infty}$) such that
for every finite subset $C$ of $Q$, the collection $\{X^N_{q}:q\in C\}$
converges in distribution to $\{X_{q}:q\in C\}$ as $N\rightarrow\infty$.
\end{thm}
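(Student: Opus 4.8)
The plan is to establish the existence of the limiting collection $\{X_q : q \in Q\}$ by a standard consistency/compactness argument, leveraging the convergence of finite-dimensional marginals that we already control via Proposition \ref{k-1-case}(b) and its analogues. Concretely, I would first argue that for \emph{every} finite subset $C$ of $Q$, the distribution of $\{X^N_q : q \in C\}$ converges as $N \to \infty$. Since each $X^N_q$ is $\{0,1\}$-valued and $C$ is finite, the space of possible configurations $\{0,1\}^C$ is finite, so convergence in distribution is equivalent to convergence of the probability of each of the finitely many configurations. Thus it suffices to show that for each fixed configuration $\omega \in \{0,1\}^C$, the limit $\lim_{N\to\infty} P_N(X^N_q = \omega_q \text{ for all } q \in C)$ exists.

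To handle an arbitrary configuration (not just the "all ones" configurations covered directly by Proposition \ref{k-1-case}), I would use inclusion–exclusion. Writing $C = C_1 \cup C_0$ according to where $\omega$ is $1$ or $0$, the probability of the event $\{X^N_q = 1 \ \forall q\in C_1,\ X^N_q = 0 \ \forall q \in C_0\}$ can be expanded as an alternating sum over subsets $D \subseteq C_0$ of the probabilities of the "all ones on $C_1 \cup D$" events. Here one must be slightly careful: since $\sigma$ is a permutation, the events $X^N_{(-i,j)}=1$ and $X^N_{(-i,j')}=1$ for $j\neq j'$ are mutually exclusive, so some of these joint "all ones" events are automatically empty (and their probability is trivially $0$, converging to $0$); the remaining ones are exactly of the form treated by Proposition \ref{k-1-case}(b), after ordering the points appropriately. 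Each such term converges to a product of $\rho$-values (or to $0$ when two requested points share a coordinate or are otherwise incompatible with $312$-avoidance in the corner, e.g.\ a point strictly northwest of another in a way forbidden by the pattern — but again such a term is identically $0$ for large $N$). Since a finite alternating sum of convergent sequences converges, the finite-dimensional marginal converges; call the limit $P_\infty^C(\omega)$.

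Next I would check Kolmogorov consistency of the family $\{P_\infty^C : C \subseteq Q \text{ finite}\}$: if $C \subseteq C'$, then the $C$-marginal of $P_\infty^{C'}$ equals $P_\infty^C$. This is immediate because consistency holds at each finite level $N$ (the $\{X^N_q\}$ are genuine random variables on one probability space, namely $S_N(312)$ with $P_N$), and consistency of the limits follows by passing $N\to\infty$ in the finite-$N$ consistency identity, using the marginal convergences just established. By the Kolmogorov extension theorem applied to the product space $\{0,1\}^Q$ (a compact metrizable space, so no measurability subtleties arise), there is a probability measure $P_\infty$ on $\{0,1\}^Q$ whose finite-dimensional marginals are exactly the $P_\infty^C$. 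Defining $X_q$ to be the coordinate projections, we get precisely the asserted convergence in distribution of $\{X^N_q : q \in C\}$ to $\{X_q : q\in C\}$ for every finite $C$.

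The main obstacle is the bookkeeping in the inclusion–exclusion step: one needs to verify that every "all ones" event appearing in the expansion either is identically empty for large $N$ (contributing $0$ in the limit) or can be rewritten, after a permutation of indices and a check of the order conditions $j_k < \cdots < j_1 < N - t_1 < \cdots < N - t_k$, in the exact form to which Proposition \ref{k-1-case}(b) applies — including confirming that points incompatible with a $312$-avoiding permutation in the lower-right corner (such as two requested points in "$312$ position" relative to each other) indeed force emptiness. This requires a short lemma characterizing which finite subsets of $Q$ can simultaneously lie in the graph of some $\sigma \in S_N(312)$ for all large $N$, but it is combinatorially routine given the structure theory of $312$-avoiding permutations already developed in Section 3 and used throughout. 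Everything else is soft: finiteness of the configuration space turns distributional convergence into convergence of finitely many numbers, and Kolmogorov extension does the rest.
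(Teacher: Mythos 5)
Your proposal is correct and follows essentially the same route as the paper: inclusion--exclusion over the zero-set reduces each finite-dimensional configuration probability to ``all-ones'' probabilities, which converge to products of $\rho$-values for northwest-decreasing point sets by Proposition \ref{k-1-case}(b) and are eventually zero otherwise, after which Kolmogorov extension yields $P_{\infty}$. The only correction: the two-point configurations that force eventual emptiness are those in \emph{increasing} (northeast) relative position, not ``northwest'' ones, and the ``short lemma'' you defer is exactly Proposition \ref{propS_C(N)}(a) (pairs sharing a coordinate being trivially impossible for a permutation), so no new combinatorial work is required.
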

 
The product form of the limit in Proposition \ref{k-1-case}(b) implies
 that the limiting collection of random variables $\{X_q:q\in Q\}$
has a kind of two-dimensional regenerative property, analogous to the more standard
regenerative property on $\mathbb{N}$ possessed by discrete renewal processes
(see \cite{Kingman}).  Our two-dimensional renewal structure is fully described 
in Theorem \ref{thm-RW}, using the following notation.

\begin{defi}
   \label{def-pi}
For $a, b \in \mathbb{N}$, let 
\begin{align*}
\pi(-a,b)\;=\;\frac{C_{a-1}C_{b-1}}{4^{a+b-1}}.
\end{align*}
\end{defi}

\noindent
It is not hard to see that $\pi$ is a probability distribution on $Q$.
(Indeed, using the well-known Catalan generating function 
$G(z) \,=\, \sum_{i=0}^{\infty}C_iz^i \,=\, (1-\sqrt{1-4z})/2z$ (e.g.\ \cite{bona1}),
we have
$\sum_{a,b=1}^{\infty}\pi(-a,b) \,=\, \frac{1}{4}\,G\left(\frac{1}{4}\right)^2\,=\,1$.)
This distribution plays a key role in the following theorem.
\begin{thm}
  \label{thm-RW}
The set $W^{*}=\{q\in Q:  X_q=1\}$ is an infinite random set of the form $\{\vec{V}_m: m\in \mathbb{N}\}$
where $\{(\vec{V}_m-\vec{V}_{m-1}):m\in \mathbb{N}\}$ are
i.i.d.\ $Q$-valued random vectors with distribution $\pi(-a,b)$ [writing $\vec{V}_0=(0,0)$].  
Moreover, the components of $\vec{V}_1$ have infinite means. 
\end{thm}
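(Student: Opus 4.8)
The plan is to build the renewal structure step by step from Proposition \ref{k-1-case}(b) and Theorem \ref{thm-limX}, reading off the points of $W^*=\{q\in Q: X_q=1\}$ in a canonical order and showing the successive increments are i.i.d.\ with law $\pi$. First I would fix a total order on $Q$ that is compatible with the ``northwest'' geometry coming from the 312-avoiding constraint: order $(-i,j)$ by the value of $i+j$ (equivalently, by anti-diagonal), breaking ties by $i$ (say). The key structural fact, already implicit in the bijections of Section 4 and in Remark \ref{remMadras}, is that if $\sigma$ is 312-avoiding and $\sigma_{N-i+1}=j$ with $j<N-i+1$, then every value $\sigma_k$ for $k<N-i+1$ must exceed $j$ whenever that value is itself below the diagonal near the corner --- more precisely, the points of the graph that lie in the lower-right corner region form a decreasing sequence as one moves left, i.e.\ they are exactly configurations of the type appearing in $S^{\searrow k}$. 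So the random set $\{q\in Q: X^N_q=1\}$, restricted to any fixed window $W_N$, is almost surely a ``staircase'' $q^{(1)}, q^{(2)},\ldots$ with $q^{(m)}=(-A_m,B_m)$ for increasing $A_m,B_m$; passing to the limit via Theorem \ref{thm-limX}, the same holds for $\{X_q=1\}$ on all of $Q$, so $W^*$ is automatically of the form $\{\vec V_m\}$ with $\vec V_m-\vec V_{m-1}\in Q$ (strictly negative first coordinate, strictly positive second coordinate).

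Next I would identify the joint law of the increments. For any $k$ and any $a_1,\ldots,a_k,b_1,\ldots,b_k\in\mathbb{N}$, the event $\{\vec V_m=(-A_m,B_m)\text{ for }m=1,\ldots,k\}$ is precisely $\bigcap_{m=1}^k\{X_{(-A_m,B_m)}=1\}$ \emph{intersected with} the event that no other $q$ with $X_q=1$ interleaves --- but by the staircase description the latter is automatic once the $k$ listed points are occupied and one additionally knows that $\vec V_1$ is the \emph{first} point. Here I must be slightly careful: $\{X^N_{(-A_m,B_m)}=1\ \forall m\}$ allows extra occupied points with smaller $A_1$. To handle this I would instead compute, using Proposition \ref{k-1-case}(b), the limit of $P_N$ of the event that $X^N_q=0$ for all $q$ with $i$-coordinate less than $A_1$ together with $X^N_{(-A_m,B_m)}=1$ for all $m$; equivalently, sum/inclusion-exclude over the finitely many ways extra corner points could appear. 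The cleanest route is: show $P_\infty(\vec V_1=(-a_1,b_1)) = \lim_N P_N(\sigma_{N-a_1+1}=b_1,\ \sigma_k>b_1\ \forall k>N-a_1+1,\ \sigma_k\ \text{not below diagonal in the corner for }k<N-a_1+1)$, and recognize the counting problem as $|S_{b_1-1}(312)|\cdot|S_{a_1-1}(312)|/4^{\,a_1+b_1-1}$ in the limit --- exactly because conditioning on $\sigma_{N-a_1+1}=b_1$ being the first corner point splits the permutation into an independent $312$-avoiding block of size $a_1-1$ below-left and one of size $b_1-1$ above-left, each contributing a Catalan factor, with the $4^{a_1+b_1-1}$ coming from $C_N\sim 4^N/(\sqrt\pi N^{3/2})$ ratios as in the proof of Proposition \ref{k-1-case}. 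This gives $P_\infty(\vec V_1 = (-a_1,b_1))=\pi(-a_1,b_1)=C_{a_1-1}C_{b_1-1}/4^{a_1+b_1-1}$.

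For the i.i.d.\ property, I would iterate: by the regenerative/product form in Proposition \ref{k-1-case}(b), $P_\infty(\vec V_m-\vec V_{m-1}=(-a_m,b_m),\ m=1,\ldots,k)=\prod_{v=1}^k\pi(-a_v,b_v)$. This identity on all finite-dimensional marginals, together with the fact (noted after Definition \ref{def-pi}) that $\pi$ is a genuine probability distribution on $Q$ so no mass escapes, shows the increment sequence is i.i.d.\ $\pi$ and that $W^*$ is a.s.\ infinite (the renewal process never terminates). Finally, the infinite-mean statement: $E|\,\text{first coordinate of }\vec V_1| = \sum_{a,b\ge1} a\,\pi(-a,b) = \tfrac14\big(\sum_a a C_{a-1}4^{-a}\big)\big(\sum_b C_{b-1}4^{-b}\big)$, and since $C_{a-1}4^{-a}\asymp a^{-3/2}$ the first sum is $\sum_a a\cdot a^{-3/2}=\sum a^{-1/2}=\infty$ (while the second sum converges to a constant, by $G(1/4)$); by symmetry of $\pi$ in its two coordinates the second component also has infinite mean. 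The main obstacle I anticipate is the bookkeeping in the second paragraph: rigorously showing that ``$X_q=1$ on the prescribed points'' can be upgraded to ``these are \emph{exactly} the first $k$ renewal points with nothing interleaving,'' i.e.\ justifying the staircase structure in the $N\to\infty$ limit and controlling the inclusion-exclusion over spurious extra corner points; everything else is a direct consequence of Proposition \ref{k-1-case}(b) and standard Catalan asymptotics.
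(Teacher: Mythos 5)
Your overall architecture (staircase structure of $W^*$ from Theorem \ref{thm-limX} and Proposition \ref{propS_C(N)}(a), then identifying the law of the exact configuration in the rectangle spanned by the first $k$ points, then infiniteness because $\pi$ sums to $1$, then the $a^{-3/2}$ tail for the infinite mean) is the same as the paper's, but there is a genuine gap at the central step. You assert that ``by the regenerative/product form in Proposition \ref{k-1-case}(b), $P_\infty(\vec V_m-\vec V_{m-1}=(-a_m,b_m),\ m=1,\ldots,k)=\prod_{v}\pi(-a_v,b_v)$.'' Proposition \ref{k-1-case}(b) does not say this: it gives the limiting probability that the points $(-A_m,B_m)$ are occupied with \emph{no} constraint on the rest of the rectangle, and that limit is $\prod_{v}\rho(a_v,b_v)$, not $\prod_{v}\pi(-a_v,b_v)$; these genuinely differ (for instance $\rho(2,2)=5/64$ while $\pi(-2,2)=1/64$). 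What the increment law actually requires is the probability that the $k$ prescribed points are \emph{exactly} the occupied sites of $[-A_k,-1]\times[1,B_k]$, i.e.\ the paper's $\theta(T,B(T))$, and showing that this equals $\prod_m\pi(-a_m,b_m)$ is precisely the content of Theorem \ref{theorem63} and Proposition \ref{prop64}: the paper counts $\Lambda_N(T)$ exactly as $C_{N-A_k-B_k+k}\prod_m C_{a_m-1}C_{b_m-1}$ by an induction on $k$ using the Insert bijection (Lemmas \ref{lemmain} and \ref{lemonetoone}, Definition \ref{phiA}), and then $C_{N-m}/C_N\to 4^{-m}$ yields the $\pi$-product. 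Your $k=1$ sketch (the event splits into independent $312$-avoiding blocks of sizes $a_1-1$ and $b_1-1$ plus a free block) is exactly the right idea and is the $k=1$ case of that theorem, but you only assert the splitting rather than prove it, and for $k\ge 2$ you replace the needed argument by a citation that does not apply. The alternative you mention --- inclusion--exclusion over spurious occupied points starting from the $\rho$-products --- is in principle a finite computation, but establishing that this alternating sum collapses to $\prod_m\pi(-a_m,b_m)$ is a nontrivial identity that you do not carry out; you flag it yourself as the ``main obstacle,'' and it is the heart of the proof rather than bookkeeping.

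The remaining ingredients are fine and match the paper: the reduction to Seq$\nwarrow$ configurations, the infiniteness argument via $P_\infty(|W^*|\ge k)=\sum_{|T|=k}\theta(T,B(T))=\left(\sum_{q\in Q}\pi(q)\right)^k=1$, and the infinite-mean computation from $C_{a-1}4^{-a}\asymp a^{-3/2}$ (up to a harmless constant-factor slip: $\pi(-a,b)=4\cdot\frac{C_{a-1}}{4^{a}}\cdot\frac{C_{b-1}}{4^{b}}$, not $\tfrac14$ times that product).
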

In particular, Theorem \ref{thm-RW} tells us that, with probability one, the random set 
$W^{*}=\{q\in Q: X_q=1\}$ is
an infinite sequence of points $\left\{(-A_m,B_m)\right\}$ such that the sequences
$\{A_m\}$ and $\{B_m\}$ are both strictly increasing.  Moreover, the distribution of $W^{*}$ 
is exactly that of the set of points visited by a random walk with jump distribution $\pi$.
Observe that such a random walk only jumps to the north and west.
(Here, ``random walk'' denotes a process which is the sequence of partial sums of 
an i.i.d.\ sequence of vectors.)

To illustrate our results, note that Proposition \ref{k-1-case}(b) tells us that
the probability of observing the three points shown in Figure \ref{fig:X} is 
approximately $\rho(1,2)\rho(4,2)\rho(2,3)$ for large $N$.  In contrast, Theorem \ref{thm-RW} says
that the probability of observing these three points \textit{and having no other points in 
$[N{-}6,N]\times [1,7]$} is approximately $\pi(-1,2)\pi(-4,2)\pi(-2,3)$.

Our final theorem says that if we condition on the event $\{\sigma_{N-t+1}=j\}$ 
(i.e.\ $\{X^N_{(-t,j)}=1\}$) and let $N$ get large while $(N-t,j)$ remains well below the diagonal,
then the conditional distribution of points above and to the left of $(N-t,j)$ (and near $(N-t,j)$) approaches the (unconditional) distribution of points in the lower right corner of the 
square $[1,N]^2$.  

\begin{thm}\label{thrm-conditional}
Let $D$ and $F$ be disjoint finite subsets of $Q$.  
Then
\begin{align*}
    \lim_{N-t-j \rightarrow \infty} & P_N(X^N_{q+(-t,j)}=1\;\; \forall q\in D \mbox{ and }
       X^N_{r+(-t,j)}=0 \;\; \forall r\in F\,|\, X^N_{(-t,j)}=1)  \\
       = &\;  P_{\infty}(X_{q}=1\;\; \forall q\in D \mbox{ and } X_{r}=0 \;\; \forall r\in F)\,.
 \end{align*}
\end{thm}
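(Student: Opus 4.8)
The plan is to reduce the conditional statement to the unconditional limiting behavior described by Proposition~\ref{k-1-case}(b) and Theorem~\ref{thm-RW}, by exploiting the bijection underlying Theorem~\ref{propS_A(N,N-t,j)} (Definition~\ref{phiA}). The key structural observation is this: the bijection between $S^{\bullet}(N,N-t,j)$ and $\bigcup_{i_0} S_{N-t-i_0}(312)\times S^{\Box}(t+i_0,i_0,j)$ decomposes a $312$-avoiding permutation with $\sigma_{N-t}=j$ into a ``northwest part'' (living in a square whose lower-right corner is essentially at $(N-t,j)$, after accounting for the shift by $i_0$) and a ``southeast remainder'' (an arbitrary $312$-avoiding permutation of length $N-t-i_0$). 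Crucially, the points of $\sigma$ lying in a fixed finite window $q+(-t,j)$, $q\in D\cup F$, near $(N-t,j)$ are governed entirely by the $S^{\Box}(t+i_0,i_0,j)$ factor together with the small parameter $i_0$ — and for $N-t-j\to\infty$ the dominant contribution comes from $i_0$ ranging over a window whose size is $O(1)$ in the relevant sense, exactly as in the computation of $\rho(a,b)$. So first I would make this decomposition explicit and identify which of the random variables $X^N_{q+(-t,j)}$ are determined by which factor.

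Next I would carry out the conditional probability computation. Writing $P_N(\,\cdot\,|\,X^N_{(-t,j)}=1)$ as a ratio of cardinalities, the denominator is $|S^{\bullet}(N,N-t,j)|$ and the numerator counts those $\sigma$ that additionally have the prescribed window pattern. Both numerator and denominator factor, via the bijection, into (a sum over $i_0$ of) a ``local'' count involving $S^{\Box}$-type quantities and $i_0$, times a Catalan number $C_{N-t-i_0}$ for the southeast remainder. In the ratio, the Catalan factors $C_{N-t-i_0}$ do not exactly cancel because $i_0$ varies; but using the standard asymptotics $C_{M-i_0}/C_M \to 4^{-i_0}$ as $M\to\infty$ (uniformly for $i_0$ in the relevant range, with the tail suppressed by the $4^{-i_0}$ decay), the ratio converges to a ratio of the corresponding $\pi$- and $\rho$-weighted local sums — which, by construction, is precisely $P_\infty(X_q=1\ \forall q\in D,\ X_r=0\ \forall r\in F)$. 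Here I would lean on the explicit form of $P_\infty$ from Theorem~\ref{thm-RW}: the limiting field is the range of a $\pi$-random walk started at the corner, so conditioning on a point $(-t,j)$ being hit and asking about the configuration to the northwest is, by the i.i.d.\ increment (regenerative) structure, the same as asking about the configuration near the corner of a fresh copy. This is the conceptual heart: the conditional statement is really a restatement of the regenerative property of Theorem~\ref{thm-RW}, pulled back through the finite-$N$ bijection.

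The main obstacle I anticipate is controlling the interchange of limits and the tail in the sum over $i_0$ (and, for $D\cup F$ not just a single point, over the several nested decomposition indices $i_0,i_1,\dots$ that arise from iterating the bijection as in Theorem~\ref{propS_B(N)}). One must show that (i) for $i_0$ in any fixed finite range the local counts converge with the right $4^{-i_0}$ normalization, and (ii) the contribution of large $i_0$ is negligible \emph{uniformly in $N$} — the latter requires a uniform bound of the form $|S^{\Box}(t+i_0,i_0,j)|\cdot C_{N-t-i_0}/|S^{\bullet}(N,N-t,j)| \le c\, r^{i_0}$ for some $r<1$, which should follow from the binomial/Catalan estimates already developed for the proofs of Theorems~\ref{propapprox1}--\ref{propapprox3} together with $C_{N-t-i_0}/C_{N-t}\le 4^{-i_0}$. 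A secondary technical point is bookkeeping the unit shifts ($i_0$ vs.\ $i_0-1$, the ``$+1$''s appearing in Theorem~\ref{propS_B(N)}) so that the finite-$N$ window $q+(-t,j)$ lines up correctly with the $Q$-indexed limit; this is routine but must be done carefully. Once the tail bound is in hand, dominated convergence finishes the argument, and the handling of the ``$X_r=0$'' constraints is the usual inclusion--exclusion, converting ``no point in a finite region'' into a signed sum of ``prescribed points present'' events, each handled by Proposition~\ref{k-1-case}(b).
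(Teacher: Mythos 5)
Your high-level skeleton (factor through the Insert bijection, reduce to the corner asymptotics of Proposition \ref{k-1-case}(b), and handle the ``$X_r=0$'' constraints by inclusion--exclusion as in Theorem \ref{thm-limX}) is the right one, but the key structural claim on which your computation rests is backwards, and the technical program you build on it would fail. In the decomposition $\sigma=\ins(\tilde\sigma,\hat\sigma,i_0)$ of Lemma \ref{lemmain}, the factor $\hat\sigma\in S^{\Box}(t+i_0,i_0,j)$ consists of the initial segment with values below $j$ together with the conditioning point and everything to its \emph{right}; the block immediately northwest of the conditioning point is the factor $\tilde\sigma\in S_{N-t-i_0}(312)$. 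Since the window points $q+(-t,j)$, $q\in D\cup F$, have positions just left of $N-t+1$ and values just above $j$, they are carried by $\tilde\sigma$, not by ``the $S^{\Box}$ factor together with $i_0$'' as you claim: the point $\sigma_{N-t-a+1}=j+b$ corresponds to a \emph{corner} point $\tilde\sigma_{\tilde N-a+1}=b$ of $\tilde\sigma$, where $\tilde N=N-t-i_0+1\geq N-t-j+1$. Because of this misidentification, the difficulties you then set out to overcome are artifacts: you do not need $C_{N-t-i_0}/C_N\to 4^{-i_0}$, you do not need the contribution to concentrate on an $O(1)$ range of $i_0$, and the uniform geometric tail bound $|S^{\Box}(t+i_0,i_0,j)|\,C_{N-t-i_0}/|S^{\bullet}(N,N-t,j)|\le c\,r^{i_0}$ is simply not available in the generality of the theorem --- only $N-t-j\to\infty$ is assumed, so $t$ and $j$ may be of order $N$, in which case (by the computation in the proof of Theorem \ref{propapprox1}) the conditional law of $i_0$ spreads over a range of order $\sqrt N$ near $j$ and the terms decay like a Gaussian in $(j-i_0)/\sqrt N$, not geometrically in $i_0$. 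Relatedly, appealing to the regenerative property of the limiting walk (Theorem \ref{thm-RW}) cannot by itself yield the statement: that theorem describes the limit field at the corner, whereas here one must control finite-$N$ conditional probabilities at an interior point, which is exactly where the work lies.

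The correct route, which is the paper's, needs none of this: apply the recursion of Theorem \ref{propS_B(N)} with the conditioning point as the outermost (lowest, rightmost) point. For $D\cup\{(0,0)\}$ forming a northwest chain, the numerator is $\sum_{i}|S^{\Box}(i+t-1,i,j)|\cdot|S^{\searrow k-1}(\tilde N,\ldots)|$, and by Proposition \ref{k-1-case}(b) the ratio $|S^{\searrow k-1}(\tilde N,\ldots)|/C_{\tilde N}$ equals $\prod_v\rho(a_v,b_v)\,(1+O(1/(N-t-j)))$ \emph{uniformly in $i$} because $\tilde N\geq N-t-j+1$; pulling this factor out, the remaining sum $\sum_i|S^{\Box}(i+t-1,i,j)|\,C_{\tilde N}$ is exactly the denominator $|S^{\bullet}(N,N-t+1,j)|$ by Theorem \ref{propS_A(N,N-t,j)}, so the $i_0$-sum cancels identically and no tail estimate or dominated convergence is required (this is Proposition \ref{prop-conditional}). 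Configurations $D\cup G$ not forming a northwest chain contribute $0$ for large $N-t-j$ by Theorem \ref{propS_C(N)}(a), and the signed sum $\sum_{G\subset F}(-1)^{|G|}$ then matches term by term the expansion defining $P_{\infty}$, completing the proof. If you correct the identification of the factors and replace your tail-bound/dominated-convergence step by this exact cancellation plus the uniformity of the $O(1/(N-t-j))$ error, your argument becomes the paper's.
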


\section{Terminology and Useful Results} 

\subsection{Pattern Avoiding Permutations and Dyck Paths}

Although we focus on the pattern 312, we first give a general definition
of pattern avoidance.

\begin{defi}
Let $k$ be a positive integer $k\geq 2$ and $\tau=\tau_1\ldots \tau_k \in S_k.$
\begin{itemize}
\item [(a)] We say that a string of $k$ distinct integers $\alpha_1\ldots\alpha_k$ forms the 
pattern $\tau$ if for each $i=1,\ldots,k$, $\alpha_i$ is the $\tau_i$th smallest element of 
$\{\alpha_1,\ldots,\alpha_k\}$. In this case we also write $\tau\,=\,\patt(\alpha_1,\ldots,\alpha_k)$. 
\item [(b)] We say that $\sigma \in S_N$ contains pattern $\tau$ if some $k$-element subsequence $\sigma_{i_1}\sigma_{i_2}\ldots\sigma_{i_k}$ of $\sigma$
occurs with the same relative order as $\tau=\tau_1\ldots \tau_k$, i.e. $\tau=\patt(\sigma_{i_1},
\ldots,\sigma_{i_k})$. If $\sigma$ does not contain the pattern $\tau$, then we 
say $\sigma$ avoids $\tau$. Let $S_N(\tau)$ be the set of permutations of $\left\{1,\ldots N \right\}$ that avoid $\tau$.
\end{itemize}
\end{defi}

\noindent
Explicitly,
a permutation $\sigma$ avoids the pattern $312$ if $\sigma$ has no subsequence of three 
elements that has same relative order as $312$, i.e., if there does not exist $i_1 < i_2 < i_3$ such
that $\sigma_{i_1}>\sigma_{i_3}> \sigma_{i_2}$.  See Figure \ref{fig:exp} for an example.
\begin{defi}\label{Dyckpath}
A Dyck segment from  $(X_0,Y_0)$ to $(X_K,Y_K)$ is a sequence $(X_0,Y_0), (X_1,Y_1),
 \ldots,$ $(X_K,Y_K)$ in $\mathbb{Z}^2$ such that and $X_0$ and $Y_0$ are nonnegative integers
 and
 \begin{equation}\label{eq:Dyck1}
X_i-X_{i-1}=1\text{, }\;
Y_i-Y_{i-1} \in \left\{ -1,+1\right\} \text{, and }\; Y_i \geq 0
\text{\hspace{3mm}for every $i=1, \ldots , K$.} 
\end{equation}
 A Dyck path of length $2L$ is a Dyck segment from $(0,0)$ to $(2L,0)$.
See Figure \ref{fig:dyckpath} below for an example. 
\end{defi}

\begin{lem}\label{CardDyckseg1}\cite{solomon}
The set $D$ of all Dyck segments from $(X_0,Y_0)$ to $(X_K,Y_K)$ has $\left|D\right|=0$ iff at least one of the following conditions holds:

\smallskip
\noindent
(a) $X_0\,>\, X_K$,

\smallskip
\noindent
(b) $|Y_K-Y_0|\, >\, X_K-X_0$, 

\smallskip
\noindent
(c) $Y_K-Y_0 \neq X_K-X_0 \, (mod \,2)$ 
(i.e., one of $Y_K-Y_0$ or $X_K-X_0$ is even and the other is odd).

\smallskip
\noindent
Otherwise, 
\begin{align*}
\left|D\right|\;=\;\binom{X_K-X_0}{\frac{X_K-X_0+\left|Y_K-Y_0 \right|}{2}}-\binom{X_K-X_0}{\frac{X_K-X_0+Y_K+Y_0+2}{2}} \;=\; 
\binom{X_K-X_0}{\frac{X_K-X_0+Y_K-Y_0}{2}}-\binom{X_K-X_0}{\frac{X_K-X_0+Y_K+Y_0+2}{2}}.
\end{align*} 
\end{lem}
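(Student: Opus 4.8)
The plan is to reduce the count to a classical ballot-type problem and then apply the reflection principle; this is the standard route, and since the lemma is quoted from \cite{solomon} I would only need to reproduce that argument cleanly. First I would translate horizontally: since every step increases $X$ by exactly one, I may assume $X_0=0$ and $X_K=n:=X_K-X_0$, so that a Dyck segment is just a sequence $Y_0,Y_1,\ldots,Y_n$ with $Y_i-Y_{i-1}\in\{-1,+1\}$, $Y_i\ge 0$, and the prescribed endpoints $Y_0$ and $Y_n=Y_K$ (both of which are $\ge 0$ by the definition of a Dyck segment). If such a segment uses $u$ up-steps and $d$ down-steps then $u+d=n$ and $u-d=Y_K-Y_0$, forcing $u=(n+Y_K-Y_0)/2$ and $d=(n-Y_K+Y_0)/2$. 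Hence no segment of any kind---even one allowed to dip below $0$---exists unless $n\ge 0$, $|Y_K-Y_0|\le n$, and $n\equiv Y_K-Y_0\pmod 2$, i.e.\ unless all three of (a), (b), (c) fail.

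Next I would check the converse direction of the ``$|D|=0$'' characterization. Suppose (a), (b), (c) all fail, so $u$ and $d$ are nonnegative integers with $u+d=n$. The ``tent'' segment that takes all $u$ up-steps first and then all $d$ down-steps stays at height $\ge Y_0\ge 0$ on its ascending part and at height $\ge Y_K\ge 0$ on its descending part, so it is a genuine Dyck segment. Thus $|D|\ge 1$, which together with the previous paragraph establishes exactly when $|D|=0$.

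Assuming now that (a), (b), (c) all fail, I would count. The number of \emph{unconstrained} $\pm 1$ paths from $Y_0$ to $Y_K$ in $n$ steps equals the number of choices of which $u$ steps go up, namely $\binom{n}{u}=\binom{n}{(n+Y_K-Y_0)/2}$, which by $\binom{n}{k}=\binom{n}{n-k}$ also equals $\binom{n}{(n+|Y_K-Y_0|)/2}$---the first term in the claimed formula. Call such a path \emph{bad} if $Y_i=-1$ for some $i$. Reflecting a bad path in the line $y=-1$ from its first visit to $-1$ onward (apply $y\mapsto -2-y$ to the tail) gives a bijection between bad paths from $Y_0$ to $Y_K$ and \emph{all} $\pm1$ paths of length $n$ from $-2-Y_0$ to $Y_K$; there are $\binom{n}{(n+Y_K-(-2-Y_0))/2}=\binom{n}{(n+Y_K+Y_0+2)/2}$ of these, with the convention that a binomial coefficient is $0$ when its lower index leaves $[0,n]$. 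Subtracting bad from total yields $|D|=\binom{n}{(n+|Y_K-Y_0|)/2}-\binom{n}{(n+Y_K+Y_0+2)/2}$; restoring $n=X_K-X_0$ and reapplying $\binom{n}{k}=\binom{n}{n-k}$ to the first term gives the second displayed expression in the lemma, so the two forms agree.

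The one point needing care is the reflection bijection. I would verify that the reflected tail of a bad path genuinely starts at $-2-Y_0$ and still ends at $Y_K$ (true because the first-hitting point $-1$ is a fixed point of the reflection and lies before the endpoint), and that the map is invertible because every path from $-2-Y_0\le -2$ to $Y_K\ge 0$ must cross level $-1$, so reflecting \emph{its} first visit to $-1$ recovers a unique bad path from $Y_0$ to $Y_K$. The remaining content is purely bookkeeping with parities and with the binomial-coefficient conventions, which also transparently handles the degenerate cases ($n=0$, or the subtracted binomial vanishing).
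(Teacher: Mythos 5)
The paper does not prove this lemma at all: it is quoted from \cite{solomon}, so there is no internal argument to compare yours against. Your proposal is the standard (and surely the intended) route---reduce to $\pm 1$ ballot paths, count unconstrained paths by choosing the up-steps, and remove the paths touching level $-1$ by the reflection principle---and it is essentially correct, including the treatment of the ``$|D|=0$'' characterization in both directions (the step-count/parity argument and the tent path, which does use the implicit standing assumption $Y_K\geq 0$, the same assumption the lemma itself makes). One detail is stated inconsistently: if you reflect the \emph{tail} of a bad path (from its first visit to $-1$ onward) via $y\mapsto -2-y$, the resulting path runs from $Y_0$ to $-2-Y_K$, not from $-2-Y_0$ to $Y_K$ as you wrote; the latter is what you get if you instead reflect the \emph{initial} segment up to the first visit. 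Your verification paragraph mixes the two versions (a ``reflected tail'' cannot change the starting height, and your inverse map reflects the head). This is harmless for the count, since the number of $n$-step paths from $Y_0$ to $-2-Y_K$ equals the number from $-2-Y_0$ to $Y_K$, namely $\binom{n}{(n+Y_K+Y_0+2)/2}$, so the final formula and the identification of the two displayed forms via $\binom{n}{k}=\binom{n}{n-k}$ stand; but you should pick one version of the reflection and state the forward and inverse maps consistently.
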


\begin{rem}
\label{remDseg}
For Dyck segments from $(0,0)$ to $(X_K,Y_K)$, the result of Lemma \ref{CardDyckseg1} can be rewritten as 
\begin{align*}
\left|D\right|
\;=\;\frac{2Y_k+2}{X_k+Y_k+2}\binom{X_k}{\frac{X_k+Y_k}{2}}.
\end{align*} 
\end{rem}
\begin{defi}
$(X_i,Y_i)$ is a peak of a given Dyck path if $Y_{i-1}=Y_{i+1}=Y_i - 1$. 
\end{defi}

Krattenthaler~\cite{Kratt1} proves that there is a bijection between $S_N(132)$ and the set $D_N$ of all Dyck paths of length $2N$. We restate his result by replacing $132$-avoiding 	permutations by its complement $312$-avoiding permutations. A different bijective proof between Dyck paths and $S_N(312)$ can also be found in~\cite{bandlow}. 

	Let $\pi \in S_{N}(312)$ and $\pi=\pi_1\pi_2\ldots \pi_N$. Following the steps of Krattenthaler's proof we first determine the left-to-right maxima in $\pi$. 	A left-to-right maximum is an element $\pi_i$ which is greater than all the elements to its left, i.e., larger than all $\pi_j$ with $j<i$. For example 	left-to-right maxima in the permutation $25647318$ are $2,5,6,7$ and $8$. Let the left-to-right maxima in $\pi$ be $M_1,M_2,\dots, M_s$ so that $\pi= M_1W_1M_2W_2 \ldots M_{s}W_s$ where $W_i$ is the (possibly empty) subword of $\pi$ in between $M_i$ and $M_{i+1}$. Then any left-to-right maximum is translated into $M_i-M_{i-1}$ up-steps (with the convention that $M_0=0$). Any subword $W_i$ is translated into $\left|W_i\right|+1$ down-steps (where $\left|W_i \right|$ denotes the number of elements of $W_i$). Hence, if $\pi_i = M_t$ for some $t \leq i$, then
 the corresponding point on the Dyck path has its horizontal component equal to
$\sum_{k=1}^{t} (M_k - M_{k-1}) + \sum_{k=1}^{t-1} \left|W_k \right| +(t-1) $. 
Observe that  $\sum_{k=1}^{t-1}\left| W_k \right| $ counts the number of elements that are not a maximum up until the $t$-th maximum $M_t$, and $t-1$ counts the previous maxima $M_1,\ldots, M_{t-1}$. Together they count the number of all positions to the left of $i$, which is $i-1$, i.e., 
$\sum_{k=1}^{t-1} \left|W_k \right| +(t-1) = i-1$. Hence, the horizontal component of the point on the Dyck path corresponding to $\pi_i=M_t$ is $M_t +(i-1)$. Similarly, the vertical component of the point on the Dyck path corresponding to $\pi_i=M_t $ is $M_t-(i-1)$. Therefore, the left-to-right maximum $M_t$ at position $i$ corresponds to a peak $(M_t+(i-1),M_t-(i-1))$ in the corresponding Dyck path. 
For example, Figures \ref{fig:perm} and \ref{fig:dyckpath} show the correspondence between 
the permutation $\pi= 25647318$ and its Dyck path.  The fourth left-to-right maximum in Figure 
\ref{fig:perm}, namely 7 (circled), corresponds to the peak $(11,3)$ in Figure \ref{fig:dyckpath} 
(dashed lines).
Observe that a clockwise $45^{\circ}$ rotation of the dashed lines in Figure \ref{fig:perm} produces
the diagonal lines and horizontal axis of Figure \ref{fig:dyckpath}.  Explicitly, this rotation maps
a point $(x,y)$ in Figure \ref{fig:perm} to the point $(x+y-1,y-x+1)$ in Figure \ref{fig:dyckpath}.

	\begin{figure}[ht]
	\begin{minipage}[b]{0.45 \linewidth}
		\centering
		\begin{tikzpicture}[scale=0.5]
			\draw (0,0) rectangle (9,8);
	  	\draw [dashed] (1,0) -- (9,8); 
	  	\draw [dashed] (1,0) -- (1,2);
	  	\draw [dashed] (1,2) -- (2,2);
	  	\draw [dashed] (2,2) -- (2,5);
	  	\draw [dashed] (2,5) -- (3,5);
	  	\draw [dashed] (3,5) -- (3,6);
	  	\draw [dashed] (3,6) -- (5,6);
	  	\draw [dashed] (5,6) -- (5,7);
	  	\draw [dashed] (5,7) -- (8,7);
	  	\draw [dashed] (8,7) -- (8,7.95);
	  	\draw [dashed] (8,7.95) -- (9,7.95);	
	
		\draw [fill] (1,2) circle (0.1);
		\draw [fill] (2,5) circle (0.1);
		\draw [fill] (3,6) circle (0.1);
		\draw [fill] (4,4) circle (0.1);
		\draw [fill] (5,7) circle (0.1);
		\draw (5,7) circle (0.2);
		\draw [fill] (6,3) circle (0.1);
		\draw [fill] (7,1) circle (0.1);
		\draw [fill] (8,8) circle (0.1);

		\node [below] at (0,0) {$0$};
		\node [below] at (1,0) {$1$};
		\node [below] at (2,0) {$2$};
		\node [below] at (3,0) {$3$};
		\node [below] at (4,0) {$4$};
		\node [below] at (5,0) {$5$};
		\node [below] at (6,0) {$6$};
		\node [below] at (7,0) {$7$};
		\node [below] at (8,0) {$8$};
		\node [below] at (9,0) {$9$};
		\node [left] at (0,1) {$1$};
		\node [left] at (0,2) {$2$};
		\node [left] at (0,3) {$3$};
		\node [left] at (0,4) {$4$};
		\node [left] at (0,5) {$5$};
		\node [left] at (0,6) {$6$};
		\node [left] at (0,7) {$7$};
		\node [left] at (0,8) {$8$};
	\end{tikzpicture}
	\caption{$\pi=25647318$ with diagonal segment corresponding to x-axis in Figure \ref{fig:dyckpath}.  Here, $M_1=2$, $M_4=7$, $W_1$ is empty, and $W_4=31$.}
	\label{fig:perm}
	\end{minipage}
	\hspace{0.5cm}
	\begin{minipage}[b]{0.5\linewidth}
		\centering
		\begin{tikzpicture}[yscale=.5, xscale=.5]
		  \draw [<->] (27,0) -- (10,0) -- (10,5);
	    \draw [dashed] (10,3) -- (21,3) -- (21,0);
	    \node [left] at (10,1) {$1$};
	    \node [left] at (10,2) {$2$};
	    \node [left] at (10,3) {$3$};
	    \node [left] at (10,4) {$4$};
	    \node [below] at (10,0) {$0$};
	    \node [below] at (12,0) {$2$};
	    \node [below] at (16,0) {$6$};
	    \node [below] at (18,0) {$8$};
	    \node [below] at (21,0) {$11$};
	    \node [below] at (25,0) {$15$};
	    \draw (10,0) -- (12,2) -- (13,1) -- (16,4) -- (17,3) -- (18,4) -- (20,2) -- (21,3) -- (24,0) -- (25,1)  -- (26,0);
	  \end{tikzpicture}
	  \caption{Dyck path of length 16 corresponding to $\pi=25647318$.}
	  \label{fig:dyckpath}
	\end{minipage}
	\end{figure}

\subsection{Approximations of Integrals}

We first record  two results that will be useful in Section \ref{sec-asymp} for obtaining the 
asymptotic  behaviors of sums.   
For a function $g$, let $\|g\|_{\infty}$ be the supremum of $|g|$ over its domain.	

\begin{prop}\label{thmapprox}
	Let $f:[0,\infty) \rightarrow \mathbb{R} $ be continuous and differentiable. Let $\Delta>0$,
$R\in \mathbb{N}$, and define $X_i=i\Delta$ for $i=1,2,\ldots,R$.  Also let 
$J=\sum_{i=1}^{R}f(X_i)\Delta$ and $I=\int_{0}^{R\Delta} f(x)\,dx$.
Then 
\[
	\left| J-I \right|\;\leq \; \|f^{'} \|_{\infty} \frac{\Delta^2 R}{2}.
\]
\end{prop}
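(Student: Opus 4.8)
The plan is to bound the error on each subinterval $[X_{i-1}, X_i]$ separately and then sum, using the fact that the left-endpoint (or rather, right-endpoint) Riemann sum error on an interval of length $\Delta$ is controlled by the derivative of $f$. Concretely, write $I = \int_0^{R\Delta} f(x)\,dx = \sum_{i=1}^{R} \int_{X_{i-1}}^{X_i} f(x)\,dx$ and $J = \sum_{i=1}^{R} f(X_i)\Delta = \sum_{i=1}^{R} \int_{X_{i-1}}^{X_i} f(X_i)\,dx$, so that
\[
  |J - I| \;\le\; \sum_{i=1}^{R} \int_{X_{i-1}}^{X_i} |f(X_i) - f(x)|\,dx.
\]
This reduces everything to estimating $|f(X_i) - f(x)|$ for $x \in [X_{i-1}, X_i]$.

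For that estimate I would invoke the Mean Value Theorem (this is where the hypothesis that $f$ is differentiable is used): for each such $x$ there is a point $\xi$ between $x$ and $X_i$ with $f(X_i) - f(x) = f'(\xi)(X_i - x)$, hence $|f(X_i) - f(x)| \le \|f'\|_\infty (X_i - x) \le \|f'\|_\infty \Delta$. Actually, to get the sharp constant $\tfrac12$ it is better to keep the factor $(X_i - x)$ and integrate it: $\int_{X_{i-1}}^{X_i} (X_i - x)\,dx = \Delta^2/2$. So each term is bounded by $\|f'\|_\infty \Delta^2 / 2$, and summing over the $R$ subintervals gives $|J - I| \le \|f'\|_\infty \, \Delta^2 R / 2$, exactly as claimed.

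There is no real obstacle here — this is a standard quadrature error bound — but the one point that deserves a moment's care is making sure the constant comes out as $\tfrac12$ rather than $1$; that is precisely why one integrates the weight $(X_i - x)$ against the MVT bound rather than crudely bounding $|f(X_i)-f(x)| \le \|f'\|_\infty \Delta$ on the whole subinterval. One should also note that $\|f'\|_\infty$ is taken over the domain $[0, R\Delta]$ (or $[0,\infty)$), which is exactly what the statement's convention for $\|\cdot\|_\infty$ provides, and that if $\|f'\|_\infty = \infty$ the bound is vacuously true, so we may assume $f'$ is bounded. The whole argument is a few lines.
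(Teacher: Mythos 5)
Your proof is correct and matches the paper's approach: the paper only remarks that such quadrature bounds are well known and sketches the two-dimensional analogue (Proposition \ref{thmapprox2}) by bounding $|\tilde{f}(x,y)-\tilde{f}(a,b)|$ by the derivative bound times the distance and integrating over each cell, which is exactly your cellwise argument $\int_{X_{i-1}}^{X_i}|f(X_i)-f(x)|\,dx\leq \|f'\|_{\infty}\Delta^2/2$ summed over the $R$ subintervals.
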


\begin{prop}\label{thmapprox2}
Let $\tilde{f}:[0,\infty)^2 \rightarrow \mathbb{R} $ be continuous and differentiable.  Let  $\Delta>0$, $R\in\mathbb{N}$, and $Y_i=X_i=i\Delta$ for $i=1,2,\ldots,R$.  
Let $\tilde{J}=\sum_{i=1}^{R}\sum_{j=1}^{R} \tilde{f}(X_i,Y_j)\Delta^2$, $\tilde{I}=\int_{0}^{R\Delta}\int_{0}^{R\Delta} \tilde{f}(x,y)\,dx\,dy$, and
$C_{\tilde{f}}:=\max\left\{ \left\|\frac{\partial \tilde{f}}{\partial x}\right\|_{\infty},
\left\|\frac{\partial \tilde{f}}{\partial y} \right\|_{\infty}\right\}$. Then
	\[\left| \tilde{J}-\tilde{I} \right|\;\leq\; C_{\tilde{f}}\Delta^3 R^2.\]
\end{prop}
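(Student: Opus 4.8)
\emph{Proof plan.} The plan is to reduce the two-dimensional estimate to two applications of the one-dimensional bound in Proposition \ref{thmapprox} by inserting an intermediate, half-discretized quantity and invoking the triangle inequality. Concretely, I would set
\[
   \tilde K \;=\; \sum_{j=1}^{R}\left(\int_{0}^{R\Delta}\tilde f(x,Y_j)\,dx\right)\Delta ,
\]
which is simultaneously the Riemann sum in the $y$-variable of the single-variable function $G(y):=\int_{0}^{R\Delta}\tilde f(x,y)\,dx$ and the $y$-integral of the $x$-Riemann sum of $\tilde f$. Then $|\tilde J-\tilde I|\le|\tilde J-\tilde K|+|\tilde K-\tilde I|$, and I will show each term is at most $\tfrac12 C_{\tilde f}\Delta^3R^2$.

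First I would bound $|\tilde J-\tilde K|$. For each fixed $j$, the inner difference $\sum_{i=1}^{R}\tilde f(X_i,Y_j)\Delta-\int_{0}^{R\Delta}\tilde f(x,Y_j)\,dx$ is precisely the quantity $J-I$ of Proposition \ref{thmapprox} applied to the one-variable function $x\mapsto\tilde f(x,Y_j)$, whose derivative has sup-norm at most $\|\partial\tilde f/\partial x\|_\infty\le C_{\tilde f}$. Hence that difference has absolute value at most $C_{\tilde f}\Delta^2R/2$, and summing the $R$ such terms, each weighted by $\Delta$, gives $|\tilde J-\tilde K|\le R\cdot(C_{\tilde f}\Delta^2R/2)\cdot\Delta=\tfrac12 C_{\tilde f}\Delta^3R^2$.

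Next I would bound $|\tilde K-\tilde I|=\bigl|\sum_{j=1}^{R}G(Y_j)\Delta-\int_{0}^{R\Delta}G(y)\,dy\bigr|$ by a second use of Proposition \ref{thmapprox}, this time with $f=G$ on $[0,R\Delta]$. For this I need that $G$ is differentiable with $G'(y)=\int_{0}^{R\Delta}(\partial\tilde f/\partial y)(x,y)\,dx$, i.e.\ the interchange of $d/dy$ with $\int_{0}^{R\Delta}dx$, which follows from differentiation under the integral sign given the continuity of $\tilde f$ and its first partials on the compact rectangle $[0,R\Delta]^2$. Then $\|G'\|_\infty\le R\Delta\,\|\partial\tilde f/\partial y\|_\infty\le R\Delta\,C_{\tilde f}$, so Proposition \ref{thmapprox} yields $|\tilde K-\tilde I|\le(R\Delta\,C_{\tilde f})\cdot\Delta^2R/2=\tfrac12 C_{\tilde f}\Delta^3R^2$. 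Adding the two bounds gives $|\tilde J-\tilde I|\le C_{\tilde f}\Delta^3R^2$, as claimed.

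The only step needing any care is the justification that $G$ is differentiable with the expected derivative (the interchange of differentiation and the inner integral), but this is entirely routine under the stated smoothness hypotheses, so I anticipate no real obstacle. One could equally well carry out the two discretizations in the opposite order—first in $y$, then in $x$—which produces the same bound.
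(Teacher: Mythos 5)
Your proof is correct, but it follows a genuinely different route from the paper's. The paper argues directly, cell by cell: on each square $[X_i-\Delta,X_i]\times[Y_j-\Delta,Y_j]$ it bounds
\[
\left|\int_{Y_j-\Delta}^{Y_j}\int_{X_i-\Delta}^{X_i}\bigl(\tilde{f}(x,y)-\tilde{f}(X_i,Y_j)\bigr)\,dx\,dy\right|
\;\leq\;\int_{Y_j-\Delta}^{Y_j}\int_{X_i-\Delta}^{X_i}\bigl(|X_i-x|+|Y_j-y|\bigr)C_{\tilde{f}}\,dx\,dy
\;=\;C_{\tilde{f}}\,\Delta^3,
\]
and sums over the $R^2$ cells; no appeal to Proposition \ref{thmapprox} and no interchange of differentiation and integration is needed. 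You instead factor the two-dimensional estimate through two applications of the one-dimensional Proposition \ref{thmapprox}, using the half-discretized quantity $\tilde{K}$ as a pivot; the extra ingredient is differentiation under the integral sign for $G(y)=\int_0^{R\Delta}\tilde{f}(x,y)\,dx$, which is indeed routine under the stated smoothness (the paper's own Lipschitz bound implicitly uses the same regularity), and the resulting $\|G'\|_{\infty}\leq R\Delta\,C_{\tilde{f}}$ exactly supplies the factor of $R$ that the second application would otherwise lack, so both routes land on the same constant $C_{\tilde{f}}\Delta^3R^2$. Your version is more modular --- it recycles the one-dimensional result and extends to higher dimensions by induction --- while the paper's is more elementary and self-contained, avoiding the Leibniz-rule step entirely.
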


In particular, suppose $\theta>0$, $R=R_N=\left\lceil N^{1/2+\theta} \right\rceil$, and $\Delta=\Delta_N = N^{-1/2}$.  Then in Proposition \ref{thmapprox}, the sums
 $I=I(N)$ and $J=J(N)$  satisfy
\[  \left| J(N)-I(N) \right| \;=\; O(N^{\theta - 1/2}), \]
and in Proposition \ref{thmapprox2} the sums $\tilde{I}=\tilde{I}(N)$ and $\tilde{J}=\tilde{J}(N)$ satisfy
\[\left| \tilde{J}(N)-\tilde{I}(N) \right| \;=\; O(N^{2\theta-1/2}).  \]

Such results are well known.  For example, 
Proposition \ref{thmapprox2} holds because
\[   \left|  \int^b_{b-\Delta}\int^a_{a-\Delta}(\tilde{f}(x,y)-\tilde{f}(a,b))\, dx\,dy\right|  \;\leq\;
   \int^b_{b-\Delta}\int^a_{a-\Delta}(|a-x|+|b-y|)\,C_{\tilde{f}}\,dx\,dy   \;=\; 
  \Delta^3 \,C_{\tilde{f}} \,.  
  \]

The following lemma is useful in the proof of Theorem \ref{propapprox1}. 
\begin{lem}\label{remintegral}
Let $T > 0$ and $K > 0$. Then 
$\int_{0}^{T}z^2e^{-Kz^2}dz=\frac{\sqrt{\pi}}{4K^{3/2}}(1+O(e^{-T\sqrt{K}})),$ 
where the term $O(e^{-T\sqrt{K}})$ is uniform over $T$ and $K$ such that $T\sqrt{K}$ is 
sufficiently large.	
\end{lem}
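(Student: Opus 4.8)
The statement to prove is Lemma~\ref{remintegral}: that $\int_0^T z^2 e^{-Kz^2}\,dz = \frac{\sqrt\pi}{4K^{3/2}}(1+O(e^{-T\sqrt K}))$ uniformly as $T\sqrt K \to \infty$.

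\medskip

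The plan is to reduce everything to the standard Gaussian moment integral $\int_0^\infty z^2 e^{-Kz^2}\,dz = \frac{\sqrt\pi}{4K^{3/2}}$ and then bound the tail $\int_T^\infty z^2 e^{-Kz^2}\,dz$. First I would perform the substitution $u = z\sqrt K$ (so $z = u/\sqrt K$, $dz = du/\sqrt K$) to get
\[
   \int_0^T z^2 e^{-Kz^2}\,dz \;=\; \frac{1}{K^{3/2}}\int_0^{T\sqrt K} u^2 e^{-u^2}\,du.
\]
This isolates the $K^{-3/2}$ prefactor and shows that the whole problem is really about the single-variable function $T\sqrt K =: S$, which is exactly the quantity the error term is claimed to be uniform in. So it suffices to show $\int_0^{S} u^2 e^{-u^2}\,du = \frac{\sqrt\pi}{4}(1 + O(e^{-S}))$ as $S\to\infty$, i.e.\ that $\int_S^\infty u^2 e^{-u^2}\,du = O(e^{-S})$ (using that $\int_0^\infty u^2 e^{-u^2}\,du = \frac{\sqrt\pi}{4}$, which is the classical value obtained e.g.\ by differentiating $\int_0^\infty e^{-au^2}\,du = \frac12\sqrt{\pi/a}$ in $a$ at $a=1$, or by integration by parts against $\frac12\int e^{-u^2}$).

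\medskip

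The one remaining step is the tail bound. For $S\ge 1$ I would integrate by parts, writing $u^2 e^{-u^2} = u\cdot(u e^{-u^2})$ and using $\int u e^{-u^2}\,du = -\tfrac12 e^{-u^2}$:
\[
   \int_S^\infty u^2 e^{-u^2}\,du \;=\; \frac{S}{2}e^{-S^2} + \frac12\int_S^\infty e^{-u^2}\,du
   \;\le\; \frac{S}{2}e^{-S^2} + \frac12\int_S^\infty e^{-u}\,du \;=\; \frac{S}{2}e^{-S^2} + \frac12 e^{-S},
\]
where I bounded $e^{-u^2}\le e^{-u}$ for $u\ge 1$. Since $\frac{S}{2}e^{-S^2}\le \frac12 e^{-S}$ for $S$ large (indeed $S e^{-S^2}/e^{-S} = S e^{S - S^2}\to 0$), the whole tail is $O(e^{-S})$ with an absolute implied constant, uniformly in $S$ for $S$ large. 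Dividing by $\frac{\sqrt\pi}{4}$ converts this into the multiplicative form $1 + O(e^{-S}) = 1 + O(e^{-T\sqrt K})$, and multiplying back by $K^{-3/2}$ gives the claim. There is no real obstacle here; the only point requiring a little care is making sure the error term is genuinely uniform, which the substitution $u=z\sqrt K$ guarantees automatically since after it the statement depends on $T$ and $K$ only through $S = T\sqrt K$.
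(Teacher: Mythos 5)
Your proof is correct and follows essentially the same route as the paper: write $\int_0^T = \int_0^\infty - \int_T^\infty$, use the exact value $\frac{\sqrt{\pi}}{4K^{3/2}}$ of the full integral, substitute $u=z\sqrt{K}$, and show the tail is $O(e^{-T\sqrt{K}})$ uniformly. The only (immaterial) difference is how the tail is bounded --- you integrate by parts and use $e^{-u^2}\leq e^{-u}$ for $u\geq 1$, while the paper uses the pointwise bound $u^2e^{-u^2}\leq e^{2u-u^2}\leq e^{-u}$ for $u\geq 3$.
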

\begin{proof}
 We know that 
	$\int_{0}^{\infty}z^2e^{-Kz^2}dz\,=\,\sqrt{\pi}/4K^{3/2}$.
	 Also, $\int_{T}^{\infty}z^2e^{-Kz^2}dz \,=\, K^{-3/2}\int_{T\sqrt{K}}^{\infty} u^2e^{-u^2}du 
	 \,$ $\leq\, K^{-3/2}\int_{T\sqrt{K}}^{\infty} e^{2u-u^2}du$.
	Since $2u-u^2 \leq -u$ for $u \geq 3$, we have
	\begin{align*}
		\int_{T\sqrt{K}}^{\infty}e^{2u - u^2} du \; \leq\; \int_{T\sqrt{K}}^{\infty}e^{-u} du \;=\;
		 e^{-T\sqrt{K}} \hspace{4mm}\mbox{ for } T\sqrt{K} \geq 3.
	\end{align*}
	Therefore, 
	$\int_{T}^{\infty}z^2e^{-Kz^2}dz=K^{-3/2}O(e^{-T\sqrt{K}})$. We conclude that
	\begin{align*}
\int_{0}^{T}z^2e^{-Kz^2}dz=\int_{0}^{\infty}z^2e^{-Kz^2}dz-\int_{T}^{\infty}z^2e^{-Kz^2}dz=\frac{\sqrt{\pi}}{4K^{3/2}}(1+O(e^{-T\sqrt{K}})).
 \end{align*}
\end{proof}

The next result is used in the proof of Theorem \ref{propapprox3}.
\begin{lem}\label{doubleintegral}
For positive $K_1$, $K_2$, $K_3$, $w_1$, and $w_2$,  we have
\begin{align*}
   \int_{0}^{w_2}\int_{0}^{w_1}xy\, & e^{-K_1x^2}e^{-K_2y^2}\left(e^{-K_3(y-x)^2}-e^{-K_3(x+y)^2}
\right)\,dx\,dy   \\
    &=\; \frac{\pi K_3}{4(K_1K_2+K_1K_3+K_2K_3)^{3/2}} \,+\,
    O\left(\frac{e^{-K_1w_1^2}+e^{-K_2w_2^2}}{K_1K_2}\right) \,.  
    \end{align*}
\end{lem}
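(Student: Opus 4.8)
The plan is to evaluate the double integral
\begin{align*}
\mathcal{I}(w_1,w_2)\;=\;\int_0^{w_2}\!\!\int_0^{w_1} xy\, e^{-K_1x^2}e^{-K_2y^2}\bigl(e^{-K_3(y-x)^2}-e^{-K_3(x+y)^2}\bigr)\,dx\,dy
\end{align*}
by first computing the limiting value $\mathcal{I}(\infty,\infty)$ exactly via a Gaussian computation, and then bounding the tail error incurred by truncating at finite $w_1,w_2$. For the main term, expand the exponent: $K_1x^2+K_2y^2+K_3(y-x)^2 = (K_1+K_3)x^2 - 2K_3xy + (K_2+K_3)y^2$, a positive-definite quadratic form in $(x,y)$ with matrix $M=\begin{pmatrix}K_1+K_3 & -K_3\\ -K_3 & K_2+K_3\end{pmatrix}$ whose determinant is exactly $K_1K_2+K_1K_3+K_2K_3$; similarly the second exponent replaces $-2K_3xy$ with $+2K_3xy$, i.e.\ uses $M'$ with the sign of the off-diagonal flipped. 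So $\mathcal{I}(\infty,\infty)$ is a difference of two integrals of the form $\int\!\!\int_{[0,\infty)^2} xy\, e^{-(x,y)M(x,y)^{\mathsf T}}\,dx\,dy$.

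To evaluate $\int_0^\infty\!\!\int_0^\infty xy\, e^{-ax^2+2cxy-by^2}\,dx\,dy$ (with $a,b>0$, $ab>c^2$), I would integrate in $x$ first by completing the square: $-ax^2+2cxy = -a(x-\tfrac{c}{a}y)^2 + \tfrac{c^2}{a}y^2$, substitute $u=x-\tfrac{c}{a}y$, and use $\int_{-cy/a}^\infty (u+\tfrac{c}{a}y)e^{-au^2}\,du$. The odd part contributes a $\tfrac{1}{2a}e^{-c^2y^2/a^2}$ term and the even part contributes $\tfrac{c}{a}y$ times a (truncated) Gaussian, but since we ultimately want the difference of the $+c$ and $-c$ versions and $\mathcal{I}(\infty,\infty)$ is finite, it is cleanest to symmetrize: replacing $c\mapsto -c$ corresponds to $y\mapsto -y$ inside, so the difference of the two double integrals over $[0,\infty)^2$ equals a single integral of $xy\,e^{-ax^2-by^2}\cdot 2\sinh(2cxy)$ over $[0,\infty)^2$, or alternatively the full-plane integral $\int_{\mathbb{R}^2} x|y|\cdots$. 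In any case the standard moment formula for a bivariate Gaussian gives, up to the normalization $\pi/\sqrt{\det M}$, a second mixed moment equal to $\tfrac{c}{2(ab-c^2)}$ per unit of the off-diagonal; carrying the constants through yields the claimed $\dfrac{\pi K_3}{4(K_1K_2+K_1K_3+K_2K_3)^{3/2}}$. This is a routine but slightly fiddly Gaussian calculation, so I would do it carefully with a clear substitution rather than quoting a formula.

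For the error term, I would write $\mathcal{I}(w_1,w_2)-\mathcal{I}(\infty,\infty)$ as a sum of three pieces: the integral over $\{x>w_1,\,y>0\}$, over $\{x>0,\,y>w_2\}$, and (with a sign) over $\{x>w_1,\,y>w_2\}$ (inclusion–exclusion for the complement of $[0,w_1]\times[0,w_2]$ in $[0,\infty)^2$). On each tail region I bound the integrand crudely: the factor $\bigl(e^{-K_3(y-x)^2}-e^{-K_3(x+y)^2}\bigr)$ lies in $[0,1]$ since $(x+y)^2\geq (y-x)^2$ for $x,y\geq 0$, so the integrand is at most $xy\,e^{-K_1x^2}e^{-K_2y^2}$. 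Then, e.g., $\int_{w_1}^\infty\!\!\int_0^\infty xy\,e^{-K_1x^2}e^{-K_2y^2}\,dy\,dx = \bigl(\int_0^\infty y e^{-K_2y^2}dy\bigr)\bigl(\int_{w_1}^\infty x e^{-K_1x^2}dx\bigr) = \tfrac{1}{2K_2}\cdot\tfrac{1}{2K_1}e^{-K_1w_1^2}$, which is $O\!\bigl(\tfrac{e^{-K_1w_1^2}}{K_1K_2}\bigr)$; symmetrically for the $y>w_2$ piece, and the corner piece is dominated by either of these. Summing gives the stated $O\!\bigl(\tfrac{e^{-K_1w_1^2}+e^{-K_2w_2^2}}{K_1K_2}\bigr)$.

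The main obstacle is the Gaussian moment computation for the leading term: getting the constant exactly right (the power $3/2$, the factor $4$, and verifying that $\det M = K_1K_2+K_1K_3+K_2K_3$ is what appears) requires care with the completion of the square and the mixed-moment formula, and it is easy to drop a factor of $2$. The truncation estimates, by contrast, are genuinely routine once one uses the elementary bound $0\le e^{-K_3(y-x)^2}-e^{-K_3(x+y)^2}\le 1$ on the first quadrant. I would also note for later use that this lemma will be applied with $K_1,K_2,K_3$ of constant order and $w_1,w_2$ growing like $N^\theta$, so $e^{-K_iw_i^2}$ is super-polynomially small and the error term is negligible against the $N^{-3}$ main term in Theorem~\ref{propapprox3}.
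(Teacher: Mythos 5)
Your proposal is correct and follows essentially the same route as the paper: the main term is obtained from the full-plane bivariate Gaussian mixed moment (the paper quotes $\int_{\mathbb{R}^2} xy\,e^{-(Ax^2+By^2+Cxy)}dx\,dy = -2\pi C/(4AB-C^2)^{3/2}$ with $A=K_1+K_3$, $B=K_2+K_3$, $C=\pm 2K_3$ and then uses evenness of the integrand in $x$ and $y$ to pass to the first quadrant, which is the same parity observation as your $\sinh$ symmetrization), and the truncation error is handled exactly as you propose, via $0\le e^{-K_3(y-x)^2}-e^{-K_3(x+y)^2}\le 1$ and the product tail bound $e^{-(K_1a^2+K_2b^2)}/4K_1K_2$. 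No gaps; your determinant $K_1K_2+K_1K_3+K_2K_3$ and the tail estimates match the paper's computation.
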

\begin{proof}
Using standard properties of bivariate Gaussian integrals, 
we know that for positive $A$ and $B$ and real  $C$ such that $4AB-C^2 > 0$, 
\[  \int_{-\infty}^{\infty}\int_{-\infty}^{\infty}xye^{-(Ax^2+By^2+Cxy)}\,dx\,dy\;=\;
    \frac{-2\pi C}{(4AB-C^2)^{3/2}}.\]
Letting $A=K_1+K_3$, $B=K_2+K_3$ and $C= \pm\,2K_3$, we obtain
\begin{equation}
  \nonumber
  \int_{-\infty}^{\infty}\int_{-\infty}^{\infty}xye^{-K_1x^2}e^{-K_2y^2}e^{-K_3(x\pm y)^2} \,dx\,dy
  \;=\; \frac{\mp\,\pi K_3}{2(K_1K_2+K_1K_3+K_2K_3)^{3/2}}.
  \end{equation}
Let  $h(x,y)\,=\,xy\,e^{-K_1x^2}e^{-K_2y^2}(e^{-K_3(y-x)^2}-e^{-K_3(x+y)^2})$.  Since $h(x,y)$ is an even function of $x$ and of $y$, we have 
\begin{equation}
\label{eq.quad2}
\int_{0}^{\infty}\int_{0}^{\infty}h(x,y)\,dx\,dy
\;=\;\frac{1}{4}\int_{-\infty}^{\infty}\int_{-\infty}^{\infty}h(x,y)\,dx\,dy
\;=\; \frac{\pi K_3}{4(K_1K_2+K_1K_3+K_2K_3)^{3/2}}.
\end{equation}
For $a,b\geq 0$, we also have the simple bounds
\begin{equation}
\label{eq.quad3}
0  \;<\; \int_b^{\infty}\int_a^{\infty} h(x,y)\, dx\,dy 
\;< \; \int_b^{\infty}\int_a^{\infty} xy\,e^{-K_1x^2}e^{-K_2y^2}\,dx\,dy
 \;=\;\frac{e^{-(K_1a^2+K_2b^2)}}{4K_1K_2}.
\end{equation}
Using Equation (\ref{eq.quad3}), we see that
\[
   \left| \int_0^{w_2}\int_0^{w_1} h  \,-\, \int_0^{\infty}\int_0^{\infty} h\right|  \;\leq \;
      \int_0^{\infty}\int_{w_1}^{\infty}h \,+\, \int_{w_2}^{\infty}\int_0^{\infty}h  \;=\;
      O\left(\frac{e^{-K_1w_1^2}+e^{-K_2w_2^2}}{K_1K_2}\right). 
\]
The lemma follows from this and Equation (\ref{eq.quad2}).
\end{proof}

\section{Proofs of the Exact Results}

\begin{defi}\label{defiinsert}
	Let $\sigma$ and $\tau$ be permutations of lengths $N$ and $M$ respectively.
For $i\in [1,N]$, we define $\ins(\tau,\sigma,i)$ (see Figure \ref{fig:insert1}) to be 
the permutation $\theta$ in $S_{N+M}$ given by
	$$\theta_k=\left\{ \begin{array}{cll}
		\sigma_k			&\mbox{ if } k < i \mbox{ and } \sigma_k < \sigma_i ,\\ 
		\sigma_k+M                  &\mbox{ if } k < i \mbox{ and } \sigma_k > \sigma_i ,\\  
		\tau_{k-i+1}+\sigma_i		&\mbox{ if } i\leq k < i+M, \\
		\sigma_{k-M}		&\mbox{ if } k \geq i+M \mbox{ and } \sigma_{k-M} \leq \sigma_i ,\\
		\sigma_{k-M}+M	&\mbox{ if } k\geq i+M \mbox{ and } \sigma_{k-M}>\sigma_i.
	\end{array}\right.$$
\end{defi}

\begin{center}
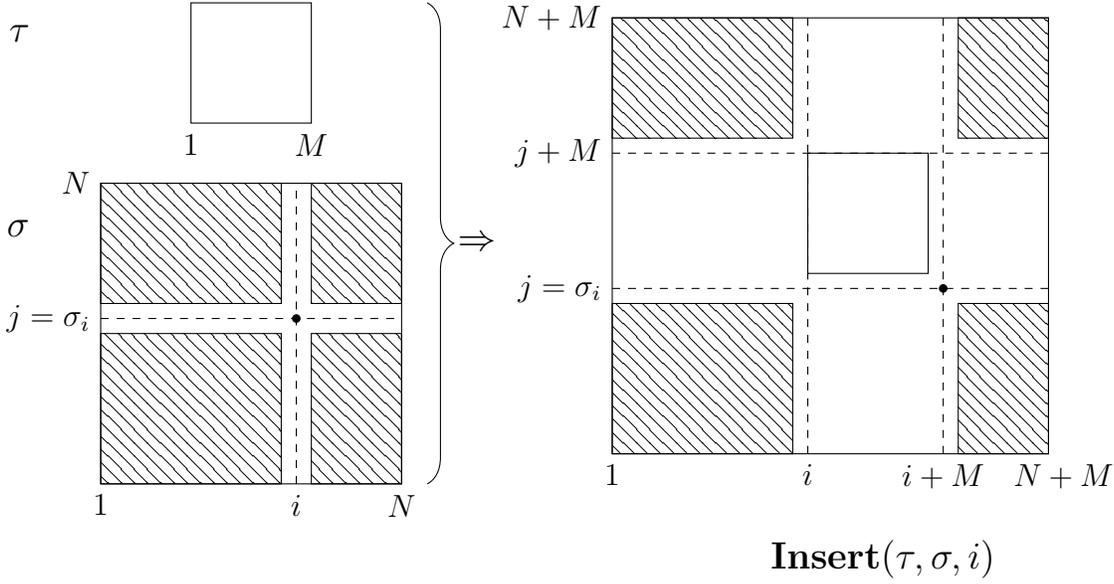
\begin{figure}[htp]
\begin{tikzpicture}[scale=.20]
	\draw (0,0) rectangle (20,20);
  \draw [dashed] (0,11) -- (20,11);
  \draw [dashed] (13,0) -- (13,20);
  \draw [fill] (13,11) circle (0.25);
  \node [below] at (0,0) {1};
  \node [left] at (0,11) {$j=\sigma_i$};
  \node [left] at (0,20) {$N$};
  \node [below] at (13,0) {$i$};
  \node [below] at (20,0) {$N$};
  \node [left] at (-4,17) {\large $\sigma$};
  \draw [pattern=custom north west lines] (0,0) rectangle (12,10);     
  \draw [pattern=custom north west lines] (14,0) rectangle (20,10);    
  \draw [pattern=custom north west lines] (0,12) rectangle (12,20);    
  \draw [pattern=custom north west lines] (14,12) rectangle (20,20);   

  \draw (6,24) rectangle (14,32);
  \node [below] at (6,24) {$1$};
  \node [below] at (14,24) {$M$};
  \node [left] at (-4,30) {\large $\tau$};

  \draw [decorate,decoration={brace,amplitude=10pt,mirror,raise=4pt},yshift=0pt]
  (21,0) -- (21,32) node [black,midway,xshift=0.8cm] {\large $\Rightarrow$};

  \draw (34,2) rectangle (63,31);
  \draw [dashed] (34,13) -- (63,13); 
  \draw [dashed] (34,22) -- (63,22);
  \draw [dashed] (47,2) -- (47,31); 
  \draw [dashed] (56,2) -- (56,31);
  \draw [fill] (56,13) circle (0.25);
  \node [below] at (34,2) {1};
  \node [left] at (34,13) {$j=\sigma_i$};
  \node [left] at (34,22) {$j+M$};
  \node [left] at (34,31) {$N+M$};
  \node [below] at (47,2) {$i$};
  \node [below] at (56,2) {$i+M$};
  \node [below] at (64,2) {$N+M$};
  \node [below] at (52,-3) {\large $\ins(\tau,\sigma,i)$};

  \draw [pattern=custom north west lines] (34,2) rectangle (46,12);     
  \draw [pattern=custom north west lines] (57,2) rectangle (63,12);     
  \draw [pattern=custom north west lines] (34,23) rectangle (46,31);     
  \draw [pattern=custom north west lines] (57,23) rectangle (63,31);     
  \draw (47,14) rectangle (55,22);	

\end{tikzpicture}
\caption{Diagram of how the graphs of $\tau$ and $\sigma$ combine to make the graph
of $\ins(\tau,\sigma,i)$.  The graph of $\sigma$ is broken into four rectangles, which 
are then moved apart to make room for $\tau$.} 
\label{fig:insert1}
\end{figure}
\end{center}

The next result shows that the Insert operation preserves $312$-avoidance in a certain 
situation.

\begin{prop}\label{propS_A(N+M,i+M,j)}
Let $\sigma \in S^{\Box}(N,i,j)$ and $\tau \in S_{M}(312)$. 
Then $\ins(\tau,\sigma,i) \in S^{\bullet}(N+M,i+M,j)$.
\end{prop}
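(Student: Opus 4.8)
The plan is to verify directly that $\theta := \ins(\tau,\sigma,i)$ both satisfies $\theta_{i+M}=j$ and avoids the pattern $312$. The first part is immediate from the definition: since $\sigma \in S^{\Box}(N,i,j)$ we have $\sigma_i = j$, and the third case of the definition of $\ins$ gives $\theta_{i+M} = \tau_{(i+M)-i+1} + \sigma_i = \tau_{M+1}+j$ — wait, one must be careful with the index range. Actually $i \le k < i+M$ covers $k = i,\dots,i+M-1$, so the block occupied by $\tau$ is the positions $i,\dots,i+M-1$, and position $i+M$ falls into the fifth case: $\theta_{i+M} = \sigma_{(i+M)-M} + M$ or $\sigma_i + M$ according to whether $\sigma_i \le \sigma_i$, i.e.\ $\theta_{i+M} = \sigma_i + M = j+M$. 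Hmm — so in fact the correct claim must be $\theta_{i+M}=j+M$, or the indices must be read so that $\tau$ occupies positions $i+1,\dots,i+M$; I would first pin down the exact indexing convention in Definition \ref{defiinsert} and the picture in Figure \ref{fig:insert1} (which shows the $\tau$-block running from $i+M$ leftward and the marked point at height $j=\sigma_i$ sitting at abscissa $i+M$), and then record the correct value of $\theta$ at the relevant coordinate so that membership in $S^{\bullet}(N+M,i+M,j)$ holds as stated.

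The substance is the $312$-avoidance. Suppose for contradiction that $\theta$ contains a $312$ pattern at positions $k_1<k_2<k_3$, so $\theta_{k_1}>\theta_{k_3}>\theta_{k_2}$. The $\ins$ construction partitions the positions $\{1,\dots,N+M\}$ into three consecutive blocks: the ``left'' block $L=[1,i)$, the ``middle'' block $B$ occupied by the inserted copy of $\tau$, and the ``right'' block $R$ beyond it. Correspondingly the values split as: low values $\{1,\dots,j-1\}$ appearing (unchanged) among $L\cup R$ at positions where $\sigma<\sigma_i$; the block of $M$ values $\{j+1,\dots,j+M\}$ appearing exactly in $B$; and the high values $\{j+M+1,\dots,N+M\}$ appearing (shifted up by $M$) among $L\cup R$ where $\sigma>\sigma_i$; plus the single value $j$ at the distinguished position. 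The key structural facts I would isolate are: (i) the restriction of $\theta$ to $L\cup R$ is, up to the order-preserving relabeling of values, exactly $\sigma$ with its $i$th entry deleted — hence $312$-avoiding since $\sigma$ is; (ii) the restriction of $\theta$ to $B$ is an order-isomorphic copy of $\tau$ — hence $312$-avoiding; (iii) crucially, because $\sigma\in S^{\Box}$, every entry of $\sigma$ to the \emph{left} of position $i$ is $<j$, so in $\theta$ every value appearing in block $L$ is $<j$, hence smaller than every value appearing in block $B$ and smaller than $j$ itself.

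Armed with (i)–(iii) I would do a short case analysis on how the three positions $k_1<k_2<k_3$ distribute among the blocks $L$, $B$, $R$. If all three lie in $L\cup R$, (i) gives a contradiction; if all three lie in $B$, (ii) does. In the mixed cases, fact (iii) is the workhorse for the ``$3$'' of the $312$: the largest element $\theta_{k_1}$ must come from an early position, but anything in $L$ has a small value (by (iii)), so $\theta_{k_1}$ being large forces $k_1\in R$ — but then $k_1<k_2<k_3$ forces all three into $R$, reducing to case (i). The only remaining possibility is $k_1\in L$; then $\theta_{k_1}<j$, so $\theta_{k_2}<\theta_{k_1}<j$ as well, which (again by (iii), since values $<j$ occur only in $L$ and $R$) forces $k_2\in L\cup R$, and $\theta_{k_3}\in(\theta_{k_2},\theta_{k_1})$ forces $k_3\in L\cup R$ too — so once more all three lie in $L\cup R$, contradicting (i). I expect the main obstacle to be nothing deep but rather the bookkeeping: getting the block/value correspondence stated cleanly (especially the value shifts and the placement of the distinguished entry $j$), and being scrupulous that the ``left entries are all $<j$'' hypothesis from $S^{\Box}$ is exactly what rules out the mixed $312$ patterns — without it the proposition is false. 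I would organize the write-up around observations (i)–(iii) as named sub-claims, then dispatch the case analysis in a couple of lines each.
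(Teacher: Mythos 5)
Your first worry is a miscalculation rather than an ambiguity in the paper: at $k=i+M$ we have $\sigma_{k-M}=\sigma_i$, so the \emph{fourth} case of Definition \ref{defiinsert} applies (its condition is $\sigma_{k-M}\leq\sigma_i$; the fifth case requires strict inequality), giving $\theta_{i+M}=\sigma_i=j$ exactly as the proposition asserts. No re-indexing is needed, and the inserted copy of $\tau$ occupies positions $i,\ldots,i+M-1$ with values $j+1,\ldots,j+M$, while the distinguished point of $\theta$ sits at $(i+M,j)$. As written, your proposal does not actually establish $\theta_{i+M}=j$, which is half of membership in $S^{\bullet}(N+M,i+M,j)$, so this needs to be fixed rather than left as a convention to ``pin down.''

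The more substantive gap is in the case analysis for $312$-avoidance. With $L=[1,i)$, $B=[i,i+M)$, $R=[i+M,N+M]$, you dispose of $k_1\in R$ and $k_1\in L$, but the claim that ``$\theta_{k_1}$ being large forces $k_1\in R$'' silently discards the case $k_1\in B$ with $k_2$ or $k_3$ outside $B$ (necessarily in $R$), and that case is not vacuous --- it is precisely the middle case ($k_1\in[i,i+M)$) that the paper's proof instructs the reader to check. The needed observation is that values at positions in $R$ are either $\leq j$ or $>j+M$ (indeed $\theta_{i+M}=j$, and for $k>i+M$ either $\theta_k=\sigma_{k-M}\leq j$ or $\theta_k=\sigma_{k-M}+M>j+M$), whereas $\theta_{k_1}\in(j,j+M]$ when $k_1\in B$. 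Then: if $k_2\in B$ and $k_3\in R$, the inequalities $\theta_{k_3}>\theta_{k_2}>j$ force $\theta_{k_3}>j+M\geq\theta_{k_1}$, a contradiction; if $k_2,k_3\in R$, then $\theta_{k_3}<\theta_{k_1}\leq j+M$ forces $\theta_{k_3}=\sigma_{k_3-M}<j$ and likewise $\theta_{k_2}=\sigma_{k_2-M}<\theta_{k_3}$, so $\sigma_i\,\sigma_{k_2-M}\,\sigma_{k_3-M}$ would be a $312$ pattern in $\sigma$. Note that this last step uses the distinguished entry $\sigma_i=j$ as the ``$3$,'' which is not captured by your fact (i) as you apply it (only to triples wholly inside $L\cup R$); relatedly, (i) is better stated as: the restriction of $\theta$ to $L\cup R$, \emph{including} position $i+M$, is order-isomorphic to all of $\sigma$. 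With these repairs your block-and-values strategy is exactly the case check the paper sketches and leaves to the reader; without them, the argument as written has a missing case and an unjustified dismissal of it.
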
 

\begin{proof}
	Let $\theta= \ins(\tau,\sigma,i)$.
Since $\sigma_i=j$, Definition \ref{defiinsert} implies that 	
$\theta_{i+M}=j$.  
We must now
show that $\patt(\theta_{k_1}\theta_{k_2}\theta_{k_3}) \neq 312$
whenever $1\leq k_1 < k_2 < k_3 \leq N+M$.  It is not hard to
do this by checking the cases that $k_1$ is in $[1,i)$, $[i,i+M)$,
or $[i+M,N+M]$.  We leave this to the reader, with the aid of
Figure \ref{fig:insert1}.
\end{proof}

The next lemma may be viewed as a converse of Proposition \ref{propS_A(N+M,i+M,j)}.
Roughly speaking, part (f) shows that every element of $S^{\bullet}(\cdot,\cdot,\cdot)$ may be expressed
as the result of an Insert operation.  Lemma \ref{lemonetoone} then shows that such an 
expression is unique.  This will permit us to evaluate the cardinality of $S^{\bullet}(\cdot,\cdot,\cdot)$
by using the Insert operation to construct an explicit bijection.

\begin{lem}\label{lemmain}
Let $\sigma \in S^{\bullet}(M,M-t,j)$ with $j < M-t$. Let $i_0=\min \left\{i \in [1,M]: \sigma_i > j \right\}$. Then \\
(a) $i_0 \in [1,M-t)$;  \\
(b) $\sigma_i > j$ for every $i \in [i_0,M-t)$;
\\
(c) $\sigma_i \in [1,j)$ for every  $i \in [1,i_0)$;
\\
(d) $j \geq i_0 \geq \max \left\{1,j-t \right\}$;  \\
(e) The image of the domain $[i_0,M-t)$ under $\sigma$ equals $(j,j+M-t-i_0]$;  \\
(f) Let $\hat{\sigma}=\patt(\sigma_1,\ldots,\sigma_{i_0-1},\sigma_{M-t},\ldots,\sigma_{M})$ and $\tilde{\sigma}=\patt(\sigma_{i_0},\ldots,\sigma_{M-t-1})$. Then $\hat{\sigma} \in S^{\Box}(t+i_0,i_0,j)$, $\tilde{\sigma} \in S_{M-t-i_0}(312)$ and $\sigma=\ins(\tilde{\sigma},\hat{\sigma},i_0)$.
\end{lem}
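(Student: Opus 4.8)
The plan is to prove parts (a)--(f) in order, since each relies on the previous ones, and the key tool throughout is the $312$-avoidance of $\sigma$ together with the defining property $\sigma_{M-t}=j$. First, for part (a): since $j<M-t$ and $\sigma$ is a permutation of $[1,M]$, the value $\sigma_{M-t}=j$ is attained at position $M-t$, but there must be some position to the \emph{left} of $M-t$ carrying a value larger than $j$ — indeed, only $j-1$ positions can carry values in $[1,j)$, so if all positions in $[1,M-t)$ carried values $<j$ we would need $M-t-1\le j-1$, contradicting $j<M-t$. Hence $i_0$, the first position with $\sigma_{i_0}>j$, lies in $[1,M-t)$.

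Next, parts (b) and (c) are really a single observation about $312$-avoidance. Part (c) is immediate from the minimality of $i_0$: every position before $i_0$ carries a value that is not $>j$, hence $<j$ (it cannot equal $j$ since $\sigma_{M-t}=j$). For part (b), suppose toward a contradiction that some $i\in(i_0,M-t)$ has $\sigma_i<j$ (it is $\neq j$). Then the triple of positions $i_0<i<M-t$ has values $\sigma_{i_0}>j$, $\sigma_i<j$, and $\sigma_{M-t}=j$, so $\sigma_{i_0}>\sigma_{M-t}>\sigma_i$ — that is $\patt(\sigma_{i_0},\sigma_i,\sigma_{M-t})=312$, contradicting $\sigma\in S_N(312)$. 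So $\sigma_i>j$ for all $i\in[i_0,M-t)$, giving (b). Part (d): the values $\sigma_1,\ldots,\sigma_{i_0-1}$ are $i_0-1$ distinct elements of $[1,j)$, so $i_0-1\le j-1$, i.e.\ $i_0\le j$; and by (b) the values $\sigma_{i_0},\ldots,\sigma_{M-t-1}$ are $M-t-i_0$ distinct elements of $(j,M]$, whose complement-below-$M-t$... more simply, the $t$ positions $M-t+1,\ldots,M$ together with position $M-t$ occupy $t+1$ slots, and the values in $(j,M]$ number $M-j$; the $M-t-i_0$ values $\sigma_{i_0},\ldots,\sigma_{M-t-1}$ all lie in $(j,M]$, forcing $M-t-i_0\le M-j$, i.e.\ $i_0\ge j-t$. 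Combined with $i_0\ge1$, this gives (d).

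For part (e): by (b), $\sigma$ maps $[i_0,M-t)$ into $(j,M]$, and this is an injection of a set of size $M-t-i_0$. I must show the image is exactly the \emph{lowest} block $(j,j+M-t-i_0]$. Suppose not: then some value $v\in(j,j+M-t-i_0]$ is attained at a position outside $[i_0,M-t)$, hence (since $v>j$ rules out positions in $[1,i_0)$ by (c), and $v\neq j$) at some position $p>M-t$. Since the image of $[i_0,M-t)$ has size $M-t-i_0$ but misses $v\le j+M-t-i_0$, it must contain some value $w>j+M-t-i_0\ge v$, attained at some $q\in[i_0,M-t)$. But then the triple $q<M-t<p$ has values $\sigma_q=w>v=\sigma_p>j=\sigma_{M-t}$, i.e.\ $\patt(\sigma_q,\sigma_{M-t},\sigma_p)=312$, a contradiction. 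Hence the image is $(j,j+M-t-i_0]$, proving (e). Finally, part (f) assembles everything: by (c) the prefix $\sigma_1\ldots\sigma_{i_0-1}$ uses values in $[1,j)$ and by (e) the suffix $\sigma_{M-t}\ldots\sigma_M$ together with the prefix uses the values $[1,j]\cup(j+M-t-i_0,M]$, which has $i_0-1+1+t=t+i_0$ elements; so $\hat\sigma$ is a permutation of $[1,t+i_0]$, it is $312$-avoiding as a pattern of $\sigma$, its value at the position corresponding to the old position $M-t$ is $i_0$ (the $i_0$-th smallest of those $t+i_0$ values, since the prefix contributes the $i_0-1$ smallest), and every earlier entry is $<i_0$ — so $\hat\sigma\in S^{\Box}(t+i_0,i_0,j)$ (here I am renaming: in $S^{\Box}(t+i_0,i_0,j)$ the role of the "$j$" of Definition~\ref{def-SNij} is played by $i_0$, but I will match it to the statement's notation). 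Wait — rereading the statement, $\hat\sigma\in S^{\Box}(t+i_0,i_0,j)$ is claimed with parameters $(N',i',j')=(t+i_0,i_0,j)$, which would require $\hat\sigma$ to have length $t+i_0$, its $i_0$-th entry equal to $j$, and earlier entries $<j$; but $j$ can exceed $t+i_0$, so this must instead be read with the understanding that $S^{\Box}(t+i_0,i_0,j)$ is interpreted via the formula of Remark~\ref{remMadras} — I will clarify this indexing in the write-up, following the paper's convention. The key structural content is: $\tilde\sigma=\patt(\sigma_{i_0},\ldots,\sigma_{M-t-1})$ is $312$-avoiding of length $M-t-i_0$ (a subpattern of $\sigma$), and reversing the Insert construction — the prefix and suffix of $\sigma$ are exactly the four rectangles of Figure~\ref{fig:insert1} applied to $\hat\sigma$ at position $i_0$, with the middle block $\sigma_{i_0}\ldots\sigma_{M-t-1}$ occupying the values $(j,j+M-t-i_0]$ in the order prescribed by $\tilde\sigma$ — shows $\sigma=\ins(\tilde\sigma,\hat\sigma,i_0)$ by directly checking the five cases of Definition~\ref{defiinsert}.

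The main obstacle is part (e) together with the bookkeeping in (f): proving the image block is the \emph{lowest} possible values (not just some $(M-t-i_0)$-subset of $(j,M]$) is where $312$-avoidance is used most delicately, and then matching the reconstructed $\hat\sigma,\tilde\sigma$ to the precise cases of the Insert definition requires careful but routine case analysis. Everything else is elementary counting.
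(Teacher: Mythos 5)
Parts (a)--(e) of your argument are correct and essentially follow the paper's own proof: (c) from minimality of $i_0$, (b) and (e) by exhibiting a $312$ pattern through the point $(M-t,j)$, and (a), (d) by pigeonhole counting (your counts differ slightly from the paper's but are equivalent).

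Part (f), however, contains a genuine error. You claim that the entry of $\hat{\sigma}=\patt(\sigma_1,\ldots,\sigma_{i_0-1},\sigma_{M-t},\ldots,\sigma_M)$ at position $i_0$ is $i_0$, ``the $i_0$-th smallest of those $t+i_0$ values, since the prefix contributes the $i_0-1$ smallest.'' That is false. By (b)/(e) the middle block $[i_0,M-t)$ uses only values in $(j,\,j+M-t-i_0]$, so \emph{every} value of $[1,j)$ appears either in the prefix or in the suffix $\sigma_{M-t+1},\ldots,\sigma_M$: the prefix contributes $i_0-1$ of them and the suffix the remaining $j-i_0$. Hence the rank of $j$ in the subsequence is exactly $j$, giving $\hat{\sigma}_{i_0}=j$, and likewise $\hat{\sigma}_i=\sigma_i<j$ for $i<i_0$; there is nothing to reinterpret, since part (d) gives $i_0\le j\le t+i_0$, so the membership $\hat{\sigma}\in S^{\Box}(t+i_0,i_0,j)$ is literally true as stated (this is how the paper argues). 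Your worry that ``$j$ can exceed $t+i_0$'' is unfounded, and your proposed fix --- reading $S^{\Box}(t+i_0,i_0,j)$ only ``via the formula of Remark \ref{remMadras}'' with the roles renamed --- would not salvage the proof: the exact identity $\hat{\sigma}_{i_0}=j$ is needed for the reconstruction $\sigma=\ins(\tilde{\sigma},\hat{\sigma},i_0)$, because Definition \ref{defiinsert} assigns the inserted block the values $\tilde{\sigma}_{k-i_0+1}+\hat{\sigma}_{i_0}$, which match the middle values in $(j,\,j+M-t-i_0]$ only when $\hat{\sigma}_{i_0}=j$; with $\hat{\sigma}_{i_0}=i_0$ the case-check of the Insert definition you invoke would fail. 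It is also this literal membership that feeds the counting via Remark \ref{remMadras} in Lemma \ref{lemphiA} and Theorem \ref{propS_A(N,N-t,j)}. So the structural idea (reverse the Insert operation) is right, but the pattern computation at the heart of (f) must be corrected as above.
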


	Figure \ref{fig:1} depicts the properties of $\sigma \in S^{\bullet}(M,M-t,j)$ that are  stated in Lemma \ref{lemmain}.

\begin{proof}
	Let $\sigma \in S_M(312)$ such that $\sigma_{M-t}=j$. 

\begin{figure}[htp]
\begin{tikzpicture}[scale=.5]
    \draw [<->] (-5,13) -- (-5,-5) -- (13,-5); 

    \draw [pattern=custom north west lines] (-5,-5) rectangle (-1,0);
    \draw (0,2) rectangle (6,8); 
    \draw [pattern=custom north west lines] (8,-5) rectangle (12,0); 
    \draw [pattern=custom north west lines] (8,9) rectangle (12,12); 

    \draw [fill] (7,1) circle (.1);

    \node [below] at (-1.3,-5) {$i_0-1$};
    \node [below] at (0.2,-5) {$i_0$};
    \draw [dashed] (0,-5) -- (0,2) ;
    \node [below] at (7,-5) {$M-t$};
    \draw [dashed] (7,1) -- (7,-5);
    \draw [dashed] (8,-5) -- (8,12);
    \node [below] at (12,-5) {$M$};
    \draw [dashed] (12,-5) -- (12,12);

    \node [left] at (-5,0) {$j-1$};
    \draw [dashed] (-5,0) -- (12,0);
    \node [left] at (-5,1) {$j$};
    \draw [dashed] (-5,1) -- (7,1);
    \node [left] at (-5,2) {$j+1$};
    \draw [dashed] (-5,2) -- (0,2);
    \node [left] at (-5,8) {$j+M-t-i_0$};
    \draw [dashed] (-5,8) -- (0,8);
    \draw [dashed] (-5,9) -- (12,9);
    \node [left] at (-5,12) {$M$};
    \draw [dashed] (-5,12) -- (12,12);
    \node at (3,5) {\LARGE$\tilde{\sigma}$};

\end{tikzpicture}
	\caption{Illustration of properties described in Lemma \ref{lemmain} for a permutation 
	$\sigma \in S_M{(312)}$ with $\sigma_{M-t}=j$.  Except for the point $(M-t,j)$, all points of 
	the graph of $\sigma$ are inside one of the four rectangles bounded by solid lines.}
	\label{fig:1}
\end{figure}
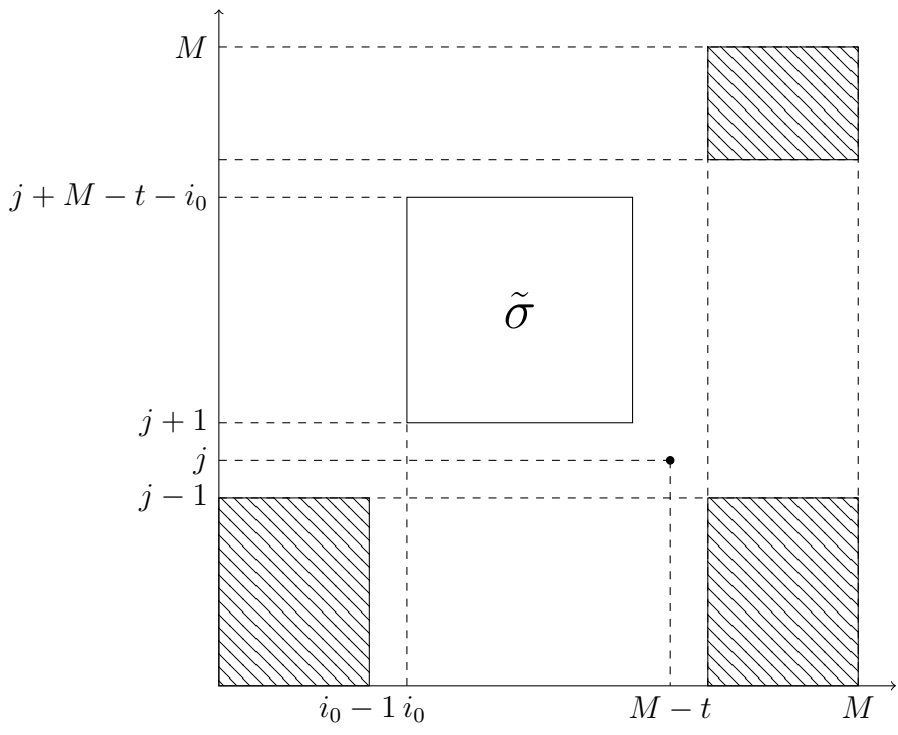

\smallskip\noindent
(a) $j < M-t$ implies that there is at least one element $i \in \left[ 1, M-t \right)$ such that $\sigma_i > j$. Hence, $i_0 \in \left[ 1, M-t \right)$.

\smallskip\noindent
(b)  If this were false, then there would exist $ i\in (i_0,M-t) $ such that $\sigma_i=j_i< j <\sigma_{i_0}$.  But then we would have $\patt(\sigma_{i_0},\sigma_{i},\sigma_{M-t})\,=\,312$. 

\smallskip\noindent
(c) The definition of $i_0$ and the fact that $M-t \notin [1,i_0)$ imply that $\forall i \in [1,i_0)$, $\sigma_{i} \in [1,j)$.

\smallskip\noindent
(d)
Part (c) and the Pigeonhole Principle imply that $i_0 \leq j$. Once $i_0 -1$ elements are mapped 
to the range $[1,j)$, the remaining $j-i_0$ elements in $[1,j)$ should be the images of elements 
in the domain $(M-t,M]$ (using part (b) and $\sigma_{M-t}=j$). Hence, $j-i_0 \leq |(M-t,M]| = t$ 
and therefore $i_0 \geq \max \left\{1,j-t\right\}$.  

\smallskip\noindent
(e) 
Assume $\exists$ $ I \in \left[i_0,M-t \right)$ such that $\sigma_{I}= J > j+M-t-i_0$. This implies that there is an element $N$ in $(j,j+M-t-i_0 ]$ that is not the image of any element in $[i_0,M-t)$. Hence, there is an element $K$ in $(M-t, M]$ such that 
$\sigma_{K} = N$. Therefore, $\sigma_{I}\sigma_{M-t}\sigma_{K}=JjN$ is a 312-pattern. This contradiction, together with part (b), proves part (e).

\smallskip\noindent
(f)  Since $\sigma$ avoids 312, clearly $\hat{\sigma}\in S_{t+i_0}(312)$ and 
$\tilde{\sigma} \in S_{M-t-i_0}(312)$.   By part (b), $\hat{\sigma}_{i_0}=\sigma_{M-t}=j$.  
Also, for all $i< i_0$, $\hat{\sigma}_{i}=\sigma_i <j$ by part (c). Hence, $\hat{\sigma} \in S^{\Box}(t+i_0,i_0,j)$.   Finally, from part (e), we deduce that $\sigma_i=\tilde{\sigma}_{i-i_0+1}+j$ for 
all $i \in [i_0,M-t)$. Using this and part (c) 
we conclude that $\sigma=\ins(\tilde{\sigma},\hat{\sigma},i_0)$.
\end{proof}

\begin{lem}\label{lemonetoone}
	Fix $N$, $t$, and $j$.  Assume that $\ins(\tau^0,\rho^0,i^0)=\ins(\tau^1,\rho^1,i^1)$, where $\tau^{k} \in S_{N-t-i^k}(312)$ and $\rho^k \in S^{\Box}(t+i^k,i^k,j)$ for $k=0,1$. 
	Then $i^0=i^1$, $\tau^0=\tau^1$ and $\rho^0=\rho^1$. 
\end{lem}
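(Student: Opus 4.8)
The plan is to show that the permutation $\theta:=\ins(\tau^k,\rho^k,i^k)$ determines the triple $(i^k,\tau^k,\rho^k)$, so that this triple — and hence each of its components — cannot depend on $k$. Write $M^k:=N-t-i^k$ for the length of $\tau^k$. I would first dispose of the degenerate case $M^k=0$ (i.e.\ $i^k=N-t$): here Definition \ref{defiinsert} collapses to $\theta=\rho^k$, so that $\tau^k$ is the empty permutation and $i^k=\theta^{-1}(j)$ is read off from $\theta$; moreover this case cannot occur for one $k$ and not the other, since if $M^k=0$ then $\theta=\rho^k\in S^{\Box}(N,N-t,j)$ has $\theta_m<j$ for all $m<N-t$, whereas (as shown next) $M^{1-k}\ge1$ would force $\theta_{i^{1-k}}>j$ with $i^{1-k}<N-t$. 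So from now on assume $M^0,M^1\ge1$.

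The crucial step is to recover $i^k$ from $\theta$ alone, via the claim $i^k=\min\{m\in[1,N]:\theta_m>j\}$. Since $\rho^k\in S^{\Box}(t+i^k,i^k,j)$, we have $\rho^k_m<\rho^k_{i^k}=j$ for every $m<i^k$, so the first clause of Definition \ref{defiinsert} gives $\theta_m=\rho^k_m<j$ for $m<i^k$; and the third clause gives $\theta_{i^k}=\tau^k_1+j>j$ since $\tau^k_1\ge1$. This proves the claim, whence $i^0=i^1=:i$. (When $j<N-t$ one could instead note that this $i$ is exactly the index $i_0$ that Lemma \ref{lemmain} attaches to $\theta$ and quote Lemma \ref{lemmain}(f) directly; the argument above avoids that hypothesis.)

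With $i$ — and hence $M:=N-t-i$ — determined, the remaining entries of $\theta$ split cleanly. The third clause of Definition \ref{defiinsert} gives $\theta_m=\tau^k_{m-i+1}+j$ for $i\le m<i+M$, so $\patt(\theta_i,\ldots,\theta_{N-t-1})=\tau^k$, which recovers $\tau^k$. The remaining clauses show that $\theta_1,\ldots,\theta_{i-1},\theta_{i+M},\ldots,\theta_N$ are the entries $\rho^k_1,\ldots,\rho^k_{t+i}$ listed in that order, with every value exceeding $j$ increased by the same constant $M$; since such a shift preserves relative order, $\patt(\theta_1,\ldots,\theta_{i-1},\theta_{N-t},\ldots,\theta_N)=\rho^k$, which recovers $\rho^k$. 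As the right-hand sides of both pattern identities are free of $k$, we conclude $\tau^0=\tau^1$ and $\rho^0=\rho^1$.

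I do not anticipate any real obstacle: once one observes that $i^k$ is the first index at which $\theta$ exceeds $j$, the rest is unwinding Definition \ref{defiinsert}. The only points needing a little care are the trivial edge case $i^k=N-t$ and the remark that permuting the four blocks of $\rho^k$ and shifting its large values by a single constant leaves patterns unchanged — both immediate from Figure \ref{fig:insert1}.
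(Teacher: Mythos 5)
Your proof is correct and follows essentially the same route as the paper: both arguments hinge on the observation that the insertion point is the first index at which the resulting permutation exceeds $j$ (the paper phrases this as a contradiction at position $i^0<i^1$, you phrase it as recovering $i^k=\min\{m:\theta_m>j\}$), after which $\tau^k$ and $\rho^k$ are read off from $\theta$. Your explicit recovery of $\tau^k$ and $\rho^k$ and your treatment of the empty-$\tau$ case merely flesh out what the paper dismisses as ``easy to see,'' so there is nothing substantively different to flag.
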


\begin{proof}
	If $i^0=i^1$ it is easy to see that $(\tau^0,\rho^0)=(\tau^1,\rho^1)$. 
	Assume that $i^0<i^1$ (or similarly, we could consider $i^0 > i^1$). By its definition, $(\ins(\tau^0,\rho^0,i^0))_{i^0}=\tau^0_1+\rho^0_{i^0}=\tau^0_1+j > j$. On the other hand, $(\ins(\tau^1,\rho^1,i^1))_{i^0}=\rho^1_{i^0} <j$ since $\rho^1 \in S^{\Box}(t+i^1,i^1,j)$. This gives a contradiction and hence we conclude the result. 
\end{proof}

Proposition \ref{propS_A(N+M,i+M,j)} allows us to make the following definition.

\begin{defi}\label{phiA}
Fix $N$, $t$, and $j$.  Let 
\begin{align*}  \dom_{\bullet}(N,t,j)\;=\bigcup_{i_0=\max\left\{1,j-t\right\}}^{j} S_{N-t-i_0}(312)\times S^{\Box}(t+i_0,i_0,j). 
\end{align*}
We define the map $\phi_{\bullet;N,t,j}:\dom_{\bullet}(N,t,j) \rightarrow S^{\bullet}(N,N{-}t,j)$ by
	\begin{align*}
		\phi_{\bullet}((\tilde{\sigma},\hat{\sigma}))=\ins(\tilde{\sigma},\hat{\sigma},i_0) 
		\hspace{5mm}\mbox{ for } 
		(\tilde{\sigma},\hat{\sigma}) \in S_{N-t-i_0}(312)\times S^{\Box}(t+i_0,i_0,j).
	\end{align*}
\end{defi}

\begin{lem}\label{lemphiA}
Fix $N$, $t$, and $j$ with $j<N{-}t$. 
Let $\phi_{\bullet;N,t,j}:\dom_{\bullet}(N,t,j) \rightarrow S^{\bullet}(N,N{-}t,j)$ be the map in Definition \ref{phiA}. 
Then $\phi_{\bullet;N,t,j}$ is a bijective map. 
\end{lem}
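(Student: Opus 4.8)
The plan is to show that $\phi_{\bullet;N,t,j}$ is both injective and surjective, drawing on the results already established. For \textbf{surjectivity}, I would take an arbitrary $\sigma \in S^{\bullet}(N,N{-}t,j)$ with $j<N{-}t$ and apply Lemma \ref{lemmain} (with $M=N$). Part (f) of that lemma produces $\hat\sigma \in S^{\Box}(t+i_0,i_0,j)$ and $\tilde\sigma \in S_{N-t-i_0}(312)$ with $\sigma = \ins(\tilde\sigma,\hat\sigma,i_0)$, and part (d) guarantees $i_0 \in [\max\{1,j-t\},j]$, so the pair $(\tilde\sigma,\hat\sigma)$ indeed lies in $\dom_{\bullet}(N,t,j)$ in the summand indexed by $i_0$. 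By the definition of $\phi_{\bullet}$ (Definition \ref{phiA}), $\phi_{\bullet}((\tilde\sigma,\hat\sigma)) = \ins(\tilde\sigma,\hat\sigma,i_0) = \sigma$, so $\sigma$ is in the image. (One should also note that $\phi_{\bullet}$ is well-defined, i.e.\ actually lands in $S^{\bullet}(N,N{-}t,j)$: this is exactly Proposition \ref{propS_A(N+M,i+M,j)}, applied with $M = N-t-i_0$, which gives $\ins(\tilde\sigma,\hat\sigma,i_0) \in S^{\bullet}(N-t-i_0+t+i_0, i_0 + (N-t-i_0), j) = S^{\bullet}(N,N{-}t,j)$.)

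For \textbf{injectivity}, suppose $\phi_{\bullet}((\tilde\sigma^0,\hat\sigma^0)) = \phi_{\bullet}((\tilde\sigma^1,\hat\sigma^1))$ where the pair indexed $k$ comes from the summand $i_0 = i^k$, i.e.\ $\tilde\sigma^k \in S_{N-t-i^k}(312)$ and $\hat\sigma^k \in S^{\Box}(t+i^k,i^k,j)$. Then $\ins(\tilde\sigma^0,\hat\sigma^0,i^0) = \ins(\tilde\sigma^1,\hat\sigma^1,i^1)$, which is precisely the hypothesis of Lemma \ref{lemonetoone}. That lemma concludes $i^0 = i^1$, $\tilde\sigma^0 = \tilde\sigma^1$ and $\hat\sigma^0 = \hat\sigma^1$, so the two pairs coincide (and in particular they came from the same summand of the disjoint union), giving injectivity.

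Since $\phi_{\bullet;N,t,j}$ is a well-defined map that is both injective and surjective, it is a bijection, as claimed. I do not expect any serious obstacle here: the genuine combinatorial work has already been packaged into Proposition \ref{propS_A(N+M,i+M,j)} (well-definedness), Lemma \ref{lemmain} (surjectivity via the canonical decomposition at the index $i_0$), and Lemma \ref{lemonetoone} (injectivity), so this lemma is essentially a bookkeeping assembly of those three facts. The only point requiring a small amount of care is matching indices: one must check that the $i_0$ extracted in Lemma \ref{lemmain} lies in the range $[\max\{1,j-t\},j]$ over which the union in $\dom_{\bullet}(N,t,j)$ is taken (this is Lemma \ref{lemmain}(d)) and that the parameters of the spaces $S_{N-t-i_0}(312)$ and $S^{\Box}(t+i_0,i_0,j)$ agree on both sides; both are immediate from the statements cited.
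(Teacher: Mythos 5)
Your proof is correct and follows essentially the same route as the paper's: injectivity via Lemma \ref{lemonetoone}, surjectivity via Lemma \ref{lemmain}(d,f) with $M=N$, and well-definedness via Proposition \ref{propS_A(N+M,i+M,j)} (which the paper invokes just before Definition \ref{phiA}). The index bookkeeping you carry out matches the paper's argument.
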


\begin{proof}
Assume that $(\tilde{\sigma}^0,\hat{\sigma}^0) \in S_{N-t-i_0}(312)\times S^{\Box}(t+i_0,i_0,j)$ and $(\tilde{\sigma}^1,\hat{\sigma}^1) \in S_{N-t-i_1}(312)\times S^{\Box}(t+i_1,i_1,j)$ and that 	
$\phi_{\bullet}((\tilde{\sigma}^0,\hat{\sigma}^0))=\phi_{\bullet}((\tilde{\sigma}^1,\hat{\sigma}^1))$. 
	We first apply Lemma \ref{lemonetoone} with $i^k=i_{k}$,$\tau^k =\tilde{\sigma}^{k}$, $\rho^k=\hat{\sigma}^{k}$ for $k=0,1$ and conclude that $\phi_{\bullet}$ is one-to-one.
Next we apply Lemma \ref{lemmain}(f), substituting $M=N$, and conclude that for each 
$\sigma \in S^{\bullet}(N,N-t,j)$ there exists an $i_0$ such that $\max\left\{1,j-t \right\} \leq i_0 \leq j$ and
that  $\sigma=\ins(\tilde{\sigma},\hat{\sigma},i_0)$ where
  $\hat{\sigma}=\patt(\sigma_1,\ldots,\sigma_{i_0-1},\sigma_{N-t},\ldots,\sigma_{N}) \in 
  S^{\Box}(t+i_0,i_0,j)$ and $\tilde{\sigma}=\patt(\sigma_{i_0},\ldots,\sigma_{N-t-1}) \in S_{N-t-i_0}(312)$.
Therefore, $\phi_{\bullet;N,t,j}$ is a surjective map and hence a bijection.   
\end{proof}

It is  now straightforward to prove Theorem \ref{propS_A(N,N-t,j)}.

\begin{proof}[Proof of Theorem \ref{propS_A(N,N-t,j)}]
By Lemma \ref{lemphiA}, we have
\[
		\left| S^{\bullet}(N,N-t,j) \right|\;=\; |\dom_{\bullet}(N,t,j)| \;=\;
		\sum_{i_0=\max\left\{1,j-t\right\}}^{j}C_{N-t-i_0}\left| S^{\Box}(t+i_0,i_0,j) \right|.
\]
Hence, Remark \ref{remMadras} completes the proof of Theorem \ref{propS_A(N,N-t,j)}.
\end{proof}

Next we look at cardinality of the set of $312$-avoiding permutations that has $k$ decreasing points below the diagonal.  That is, for  $j_k<j_{k-1} < \ldots < j_1 < N-t_1 < \ldots < N-t_k$,
we shall prove the recursive formula  in Theorem \ref{propS_B(N)}  for the cardinality of
\[		S^{{\searrow}_k}(N)\; \equiv\;  S^{{\searrow}_k}(N,N-t_1,\ldots,N-t_k,j_1,\ldots,j_k)=
		\left\{\sigma \in S_N(312):\sigma_{N-t_1}=j_1, \,\ldots \,\sigma_{N-t_k}=j_k \right\}.
\]

\begin{proof}[Proof of Theorem \ref{propS_B(N)}]
Let $\sigma \in S^{{\searrow}_k}(N)$.  Since $S^{{\searrow}_k}(N) \subseteq S^{\bullet}(N,N-t_{k},j_{k})$, we apply Lemma \ref{lemmain} with $M=N$, $t=t_{k}$, $j=j_{k}$ and writing $i$ instead of $i_0$. See Figure \ref{fig-S_B(N)}.  
By part (f) of this Lemma we conclude that $\sigma=\ins(\tilde{\sigma},\hat{\sigma},i)$, where 
$\hat{\sigma}=\patt(\sigma_1,\ldots,\sigma_{i-1},\sigma_{N-t_{k}},\ldots,\sigma_N) \in S^{\Box}(t_{k}+i,i,j_{k})$ and $\tilde{\sigma}=\patt(\sigma_{i},\ldots,\sigma_{N-t_{k}-1}) \in S_{N-t_{k}-i}(312)$. 
By Lemma \ref{lemmain}(d) we also know that 
$\displaystyle \max\left\{1,j_{k}-t_{k} \right\} \leq i \leq j_{k}$. Since 
$ i \leq j_{k}< \ldots <j_1< N{-}t_1<\ldots < N{-}t_{k}$ and using Lemma \ref{lemmain}(e), 
we see that $\tilde{\sigma}_{N-t_r-i+1}=\sigma_{N-t_r}-\sigma_{N-t_{k}}=j_r-j_{k}$ for 
$r=1,\ldots,k{-}1$.
Hence, $\tilde{\sigma} \in S^{{\searrow}_{k-1}}(N-t_{k}-i,N-t_1-(i-1),N-t_2-(i-1),\ldots,N-t_{k-1}-(i-1),j_1-j_{k},\ldots,j_{k-1}-j_{k})$.

\begin{figure}[htp]
\begin{center}
\begin{tikzpicture}[scale=.5]
    \draw [<->] (-5,13) -- (-5,-5) -- (13,-5); 

    \draw [pattern=custom north west lines] (-5,-5) rectangle (-3,-2);
    \draw (-2.5,-1.5) rectangle (8.5,9.5); 
    \draw [pattern=custom north west lines] (9,-5) rectangle (12,-2); 
    \draw [pattern=custom north west lines] (9,9.75) rectangle (12,12); 

    \draw [fill] (9,-2) circle (.12);
    \draw [fill] (7,0) circle (.1);
    \draw [fill] (6.5,0.8) circle (.07);
    \draw [fill] (5.9,1.4) circle (.07);
    \draw [fill] (5.6,2) circle (.07);
    \draw [fill] (5,3) circle (.1);

    \node [below] at (-2.5,-5) {$i$};
    \draw [dashed] (-2.5,-5) -- (-2.5,-1.5) ;
    \node [below] at (5,-5) {$N-t_1$};
    \draw [dashed] (5,3) -- (5,-5);
    \node [below] at (9,-5) {$N{-}t_k$};
    \node [below] at (12,-5) {$N$};
    \draw [dashed] (12,-5) -- (12,12);

    \node [right] at (12.2,-2) {$j_k$};
    \draw [dashed] (7,0) -- (12,0);
    \node [right] at (12.2,0) {$j_{k-1}$};
    \draw [dashed] (5,3) -- (12,3);
    \node [right] at (12.2,3) {$j_{1}$};

    \node [left] at (-5,12) {$N$};
    \draw [dashed] (-5,12) -- (12,12);
    \node at (2,0.5) {\LARGE$\tilde{\sigma}$};

\end{tikzpicture}
\end{center}
        \caption{\label{fig-S_B(N)} Proof of Theorem \ref{propS_B(N)}.  The shaded regions 
correspond to the permutation $\hat{\sigma}$.
}
\end{figure}
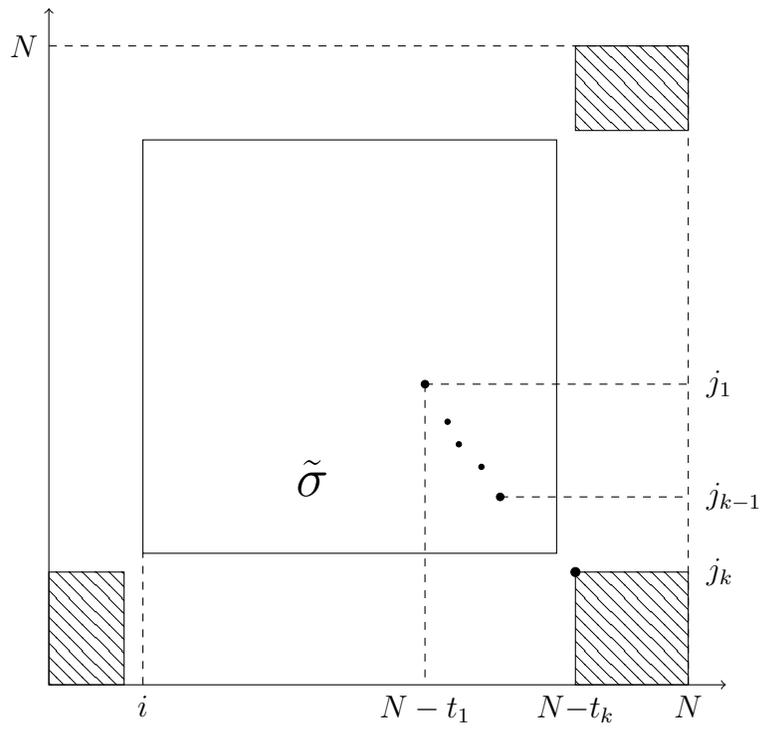

	Let 
\begin{align*}   
\dom_{\searrow_{k}} \,&\equiv\, \dom_{\searrow_{k}}(N,j_1,j_2,\ldots,j_{k},t_1,t_2,\ldots t_{k})\, 
 =\bigcup_{i=\max\left\{{1,j_{k}-t_{k}}\right\}}^{j_{k}} \\
&   \!\! S^{{\searrow}_{k-1}}(N{-}t_{k}{-}i,N{-}t_1{-}i{+}1,\ldots,N{-}t_{k-1}{-}i{+}1,j_1{-}j_k, \ldots
  ,j_{k-1}{-}j_{k})   \times S^{\Box}(t_{k}{+}i,i,j_{k}).
\end{align*} 
Observe that $\dom_{\searrow_{k}}\subset \dom_{\bullet}(N,t_{k},j_{k})$.
We define the map $\phi_{\searrow_{k}}$ 
such that 
	$\phi_{\searrow_{k}}:\dom_{\searrow_{k}} \rightarrow S^{{\searrow}_k}(N)$ is the restriction of $\phi_{\bullet;N,t_{k},j_{k}}$ to $\dom_{\searrow_{k}}$.

	In the discussion of the first paragraph, we see that $(\tilde{\sigma},\hat{\sigma}) \in \dom_{\searrow_{k}}$, and that $\sigma=\ins(\tilde{\sigma},\hat{\sigma},i)=\phi_{\searrow_{k}}(\tilde{\sigma},\hat{\sigma})$. Hence 
$\phi_{\searrow_{k}}$ is surjective, and $\phi_{\searrow_{k}}$ is injective (since by Lemma \ref{lemphiA} $\phi_{\bullet;N,t_{k},j_{k}}$ is), so $\phi_{\searrow_{k}}$ is a bijection. This implies that
	\begin{align*}
		\left|S^{{\searrow}_k}(N)\right|
		\;= & \sum_{i=\max\left\{1,j_{k}-t_{k}\right\}}^{j_{k}} 
		  \left|S^{\Box}(t_{k}+i,i,j_{k})\right| \times \\  
	&\hspace{15mm} \left|S^{{\searrow}_{k-1}}(N{-}t_{k}{-}i,N{-}t_1{-}i{+}1,\ldots,
	N{-}t_{k-1}{-}i{+}1,j_1{-}j_k, \ldots,j_{k-1}{-}j_{k})\right|.
	\end{align*}
This proves the recursion.
The formula for the case $k=2$ follows, using Theorem \ref{propS_A(N,N-t,j)}.
\end{proof}

Now we turn to the proof of Theorem \ref{propS_C(N)}, which establishes the cardinality of
\[ S^{\nearrow}(N)\equiv S^{\nearrow}(N,N-t_1,N-t_2,j_1,j_2)=\left\{ \sigma \in S_N(312):\sigma_{N-t_1}=j_1,
       \sigma_{N-t_2}=j_2 \right\} 
\]
for the situation $j_1 < N-t_1<N-t_2$, $j_1<j_2 < N-t_2$.  

\begin{proof}[Proof of Theorem \ref{propS_C(N)}]
	Let $\sigma \in S^{\nearrow}(N)$. For every $i \in \left[1,N-t_1\right)$, $\sigma_i$ cannot 
belong to the interval $\left(j_2,N\right]$ (otherwise  $\sigma_i\sigma_{N-t_1}\sigma_{N-t_2}$ 
will form a $312$ pattern). This implies that only domain elements in 
$\left(N-t_1,N\right]\setminus\left\{N-t_2\right\}$ will 	be mapped into $\left(j_2,N\right]$. Hence 
$\left|\left(j_2,N\right]\right| \leq \left|\left(N-t_1,N\right]\setminus\left\{N-t_2\right\}\right|$, 
which says $j_2 \geq N-t_1+1$. This proves part (a).

	For the rest of the proof we assume that $j_2 \geq N-t_1+1$.
	Let $\sigma \in S^{\nearrow}(N)$. 
	Observing that $S^{\nearrow}(N) \subseteq S^{\bullet}(N,N-t_2,j_2)$, we apply Lemma \ref{lemmain} with $M=N$, $t=t_2$, $j=j_2$, and writing $i_2$ for $i_0$.
	In Lemma \ref{lemmain}(d,f), we get that $j_2 \geq i_2 \geq j_2-t_2$,  and that $\sigma = \ins(\tilde{\sigma},\hat{\sigma},i_2)$ where 
$\hat{\sigma} \,=\, \patt(\sigma_1,\ldots,\sigma_{i_2-1},\sigma_{N-t_2},\dots,\sigma_N)
\in S^{\Box}(t_2+i_2,i_2,j_2)$ and  $ \tilde{\sigma}\,=\, \patt(\sigma_{i_2},\ldots,\sigma_{N-t_2-1}) \in S_{N-t_2-i_2}(312)$. 

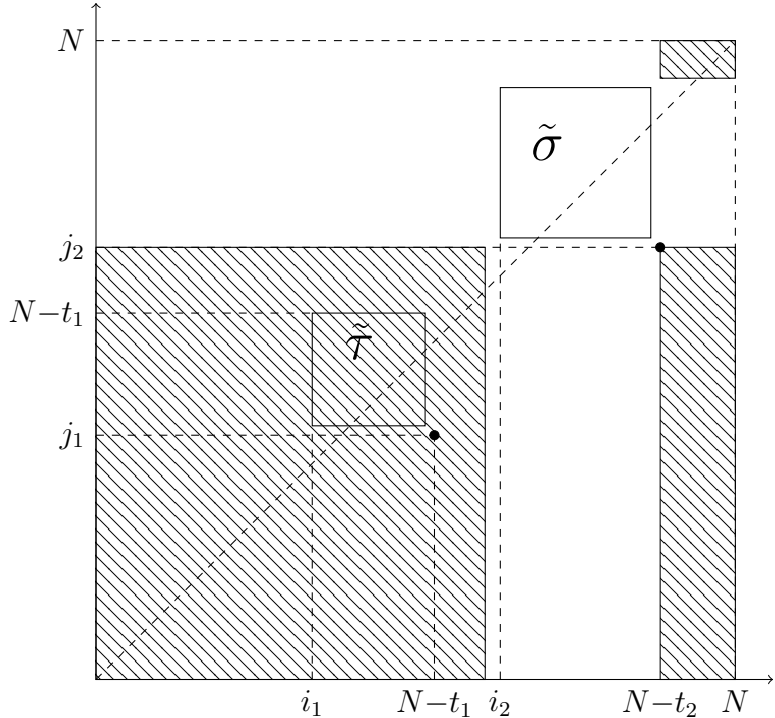
\begin{figure}[htp]
\begin{center}
\begin{tikzpicture}[scale=.5]
    \draw [<->] (-5,13) -- (-5,-5) -- (13,-5); 

    \draw [pattern=custom north west lines] (-5,-5) rectangle (5.35,6.5);
    \draw (5.75,6.75) rectangle (9.75,10.75); 
    \draw (0.75,1.75) rectangle (3.75,4.75); 
    \draw [pattern=custom north west lines] (10,-5) rectangle (12,6.5); 
    \draw [pattern=custom north west lines] (10,11) rectangle (12,12); 
    \draw [dashed] (-5,-5) -- (12,12);
    
    \draw [fill] (10,6.5) circle (.12);
    \draw [fill] (4,1.5) circle (.12);
    \node at (7,9.3) {\LARGE$\tilde{\sigma}$};
    \node at (2,4) {\LARGE$\tilde{\tau}$};
    
    \node [below] at (5.75,-5) {$i_2$};
    \draw [dashed] (5.75,-5) -- (5.75,6.75) ;
    \node [below] at (4,-5) {$N{-}t_1$};
    \draw [dashed] (4,-5) -- (4,1.5) ;
    \node [below] at (0.75,-5) {$i_1$};
    \draw [dashed] (0.75,-5) -- (0.75,1.75) ;
    \node [below] at (10,-5) {$N{-}t_2$};
    \node [below] at (12,-5) {$N$};
    \draw [dashed] (12,-5) -- (12,12);

    \node [left] at (-5,1.5) {$j_1$};
    \draw [dashed] (-5,1.5) -- (4,1.5);
    \node [left] at (-5,4.75) {$N{-}t_1$};
    \draw [dashed] (-5,4.75) -- (0.75,4.75);
    \node [left] at (-5,6.5) {$j_{2}$};
     \draw [dashed] (-5,6.5) -- (10,6.5);
    \node [left] at (-5,12) {$N$};
    \draw [dashed] (-5,12) -- (12,12);

\end{tikzpicture}
\end{center}
        \caption{\label{fig-S_C(N)}  Proof of Theorem \ref{propS_C(N)}(b).  The shaded regions 
correspond to the permutation $\hat{\sigma}$.
The diagonal is drawn for reference.
}
\end{figure}

	By Lemma \ref{lemmain}(b), $\sigma_i > j_2$ for all $i \in [i_2,N-t_2)$, and since 
$\sigma_{N-t_1}=j_1<j_2$ it follows that $N-t_1<i_2$ and hence 
$\hat{\sigma}_{N-t_1}=\sigma_{N-t_1}=j_1$. 
We conclude that  $\hat{\sigma} \in S^{\bullet}(t_2+i_2,N-t_1,j_1)$. 
Also, we have shown $i_2 \in \left[\max\left\{N-t_1+1,j_2-t_2 \right\},j_2\right]$.	
Now we will apply Lemma \ref{lemmain} one more 
time to $\hat{\sigma}$ instead of $\sigma$. Here we take $M=t_2+i_2$, $t=(t_2+i_2)-(N-t_1)$, $j=j_1$ and $i_0=i_1$. By Lemma \ref{lemmain} we get that 	
$i_1 \in [\max\left\{1,j_1-t_1-t_2-i_2+N \right\},j_1]$ and that 
$\hat{\sigma}=\ins(\tilde{\tau},\hat{\tau},i_1)$ with $\tilde{\tau} \in S_{N-t_1-i_1}(312)$ and 
$\hat{\tau} \in S^{\Box}(t_1+t_2+i_1+i_2-N,i_1,j_1)$. 
Furthermore, since $\hat{\sigma} \in S^{\Box}(t_2+i_2,i_2,j_2)$,  we know that 
$\hat{\sigma}_i < j_2$ $\forall i \in [i_1,N-t_1)\subset [1,i_2)$. Hence, by Lemma \ref{lemmain}(e) we conclude that $j_1 +(t_2+i_2)-(t_2+i_2-N+t_1)-i_1 <j_2$, i.e.$, j_1-j_2+N-t_1 < i_1$.

Let $M^*\,=\,N-t_1-i_1$ (the size of $\tilde{\tau}$).  We shall now show that
$\hat{\tau} \in S^{\Box}(i_1+i_2+t_1+t_2-N,i_2-M^*,j_2-M^*)$.  
We know $\hat{\sigma}_{i_2}=j_2$
and $\hat{\sigma}_i<j_2$ for all $i<j_2$.  We also know $i_2>N-t_1=i_1+M^*$, so by the 
definition of $\ins$ we see that either 
\begin{verse}
   ($\alpha$)  $\hat{\sigma}_{i_2}\,=\, \hat{\tau}_{i_2-M^*}$ and 
        $\hat{\tau}_{i_2-M^*}\,\leq \,\hat{\tau}_{i_1}$, \hspace{3mm}or    \\
  ($\beta$) $\hat{\sigma}_{i_2}\,=\, \hat{\tau}_{i_2-M^*}+M^*$ and 
        $\hat{\tau}_{i_2-M^*}\,> \,\hat{\tau}_{i_1}$.
\end{verse}
Now, $\hat{\tau}_{i_1}\,=\,j_1\,<\,j_2\,=\,\hat{\sigma}_{i_2}$, so ($\alpha$) does not hold.
Therefore ($\beta$) holds, so 
$\hat{\tau}_{i_2-M^*}\,=\,\hat{\sigma}_{i_2}-M^*\,=\,j_2-M^*$.  It remains to show that
$\hat{\tau}_u<j_2-M^*$ for all $u<i_2-M^*$.  On the one hand, if $u<i_1$, then
$\hat{\tau}_u\leq \hat{\sigma}_u<j_1<j_2-M^*$ (by the last inequality of the preceding
paragraph).  On the other hand, if $i_1\leq u<i_2-M^*$, then 
$\hat{\tau}_u \leq \max \left\{ j_1, \hat{\sigma}_{u+M^*}-M^* \right\}  < j_2-M^*$.
This completes the proof that
 $\hat{\tau}\in S^{\Box}(i_1+i_2+t_1+t_2-N,i_2-M^*,j_2-M^*)$.

	Let
\begin{align*}
		&\dom_{\nearrow}\; \equiv\; \dom_{\nearrow}(N,j_1,j_2,t_1,t_2)\;= \\
		& \bigcup_{i_1=\max\left\{1,j_1-j_2+N-t_1+1 \right\}}^{j_1}\bigcup_{i_2=\max\left\{N-t_1+1,j_2-t_2 \right\}}^{j_2} S_{N-t_2-i_2}(312) \times 	S_{N-t_1-i_1}(312)\times S_{\cap}(N,i_1,i_2)
	\end{align*}
where
\[
	S_{\cap}(N,i_1,i_2)= S^{\Box}(i_1+i_2+t_1+t_2-N,i_1,j_1) \cap S^{\Box}(i_1+i_2+t_1+t_2-N,i_2-M^*,j_2-M^*) \,.
\]
We define the map $\phi_{\nearrow} \equiv \phi_{\nearrow;N,t_1,t_2,j_1,j_2}:\dom_{\nearrow} \rightarrow S^{\nearrow}(N)$ by
	\begin{align*}
		\phi_{\nearrow}(\alpha,\beta,\gamma) \;=\;& \ins(\alpha,\ins(\beta,\gamma,i_1),i_2)  \\
	& \hspace{4mm}\mbox{ for } \;\;(\alpha,\beta,\gamma)\; \in \; 
	S_{N-t_2-i_2}(312)\times S_{N-t_1-i_1}(312)\times S_{\cap}(N,i_1,i_2).
	\end{align*} 
   From the above, we know that for all $\sigma \in S^{\nearrow}(N)$ there exists 
$(\tilde{\sigma},\tilde{\tau},\hat{\tau}) \in \dom_{\nearrow}$ such that
$\sigma= \ins(\tilde{\sigma},\ins(\tilde{\tau},\hat{\tau},i_1),i_2)$. Hence, $\phi_{\nearrow}$ is a surjective map. We claim that $\phi_{\nearrow}$ is 	one-to-one.
Assume that $(\alpha^1,\beta^1,\gamma^1) \in S_{N-t_2-i^1_{2}}(312)\times S_{N-t_1-i^1_{1}}(312)\times S_{\cap}(N,i^1_{1},i^1_{2})$ and 	$(\alpha^2,\beta^2,\gamma^2) \in S_{N-t_2-i^2_{2}}(312)\times S_{N-t_1-i^2_{1}}(312)\times S_{\cap}(N,i^2_{1},i^2_{2})$ and that 
$\phi_{\nearrow}((\alpha^1,\beta^1,\gamma^1))=\phi_{\nearrow}((\alpha^2,\beta^2,\gamma^2))$. 
	We have $\alpha^k \in S_{N-t_2-i^k_2}$ and 
$\ins(\beta^k,\gamma^k,i^k_1) \in S^{\Box}(t_2+i^k_2,i^k_2,j_2)$ for $k=1,2$.
	We apply Lemma \ref{lemonetoone} with $i^k=i^k_2$, 	$\tau^k=\alpha^k$ and 
$\rho^k=\ins(\beta^k,\gamma^k,i^k_1)$ and conclude that $i^1_2=i^2_2$, $\alpha^1=\alpha^2$ 
and 	$\ins(\beta^1,\gamma^1,i^1_1)=\ins(\beta^2,\gamma^2,i^2_1)$. 
Noting that $\beta^k \in S_{N-t_1-i^k_1}(312)$ and 
$\gamma^k \in S^{\Box}(t_1+t_2+i^k_1+i^k_2-N,i^k_1,j_1)$, we apply Lemma \ref{lemonetoone} 
one more time. This second time we replace 
$t$ by $(t_2+i^k_2)-(N-t_1)$ and hence conclude that  $i^1_1=i^2_1$, $\beta^1=\beta^2$ and 
$\gamma^1=\gamma^2$.   This proves that $\phi_{\nearrow}$ is a one-to-one map. Therefore 
$\phi_{\nearrow}$ is a bijection and 
 \begin{equation}
    \label{eq.SCNsumsum}
		\left|S^{\nearrow}(N)\right|\;=
		\sum_{i_1=\max\left\{1,j_1-j_2+N-t_1+1\right\}}^{j_1}\sum_{i_2=\max\left\{N-t_1+1,j_2-t_2\right\}}^{j_2} C_{N-t_1-i_1}C_{N-t_2-i_2}\left|S_{\cap}(N,i_1,i_2)\right|.
 \end{equation}
 
	For given $i_1,i_2,j_1,j_2,t_1,t_2,N$, 
the set 
$S_{\cap}(N,i_1,i_2)$ consists of all $\sigma$ in 
$S_{i_1+i_2+t_1+t_2-N}(312)$ such that 
$\sigma_{i_1}=j_1$ is a left-to-right maximum and 	$\sigma_{i_2-(N-t_1-i_1)}=j_2-(N-t_1-i_1)$ is 
a left-to-right maximum. Hence, using Krattenthaler's bijection from Section 3.1, 
these two points correspond to 
peaks at $(j_1+(i_1-1),j_1-(i_1-1))$ and $(j_2+i_2-2(N-t_1-i_1)-1,j_2-(i_2-1))$  
on the Dyck path associated with $\sigma$.  We deduce that $|S_{\cap}(N,i_1,i_2)|$ 
equals the product of the cardinalities of the three sets of Dyck segments $D_{(1)}$, $D_{(2)}$, $D_{(3)}$, where

\smallskip\noindent
$\bullet$ $D_{(1)}$ is the set of Dyck segments from $(0,0)$ to $(j_1+i_1-2,j_1-i_1)$, 

\smallskip\noindent
$\bullet$	$D_{(2)}$ is the set of Dyck segments from $(j_1+i_1,j_1-i_1)$ to $(j_2+i_2-2(N-t_1-i_1)-2,j_2-i_2)$, and

\smallskip\noindent
$\bullet$	$D_{(3)}$ is the set of Dyck segments from $(j_2+i_2-2(N-t_1-i_1),j_2-i_2)$ to $(2(i_1+i_2+t_1+t_2-N),0)$, which has the same cardinality as the set of Dyck segments from 	$(0,0)$ to $(2(i_1+i_2+t_1+t_2-N)-(j_2+i_2-2(N-t_1-i_1)),j_2-i_2)=(i_2+2t_2-j_2,j_2-i_2)$. 

\smallskip\noindent
Recalling Lemma \ref{CardDyckseg1} and Remark \ref{remDseg}, we obtain
\begin{align*}
	\left|D_{(1)}\right| \;&=\;\frac{j_1-i_1+1}{j_1}\binom{j_1+i_1-2}{j_1-1},  \\
	\left|D_{(2)}\right| \;&=\; \binom{i_1+i_2+j_2-j_1-2(N-t_1+1)}{j_2+(i_1-j_1-1)-(N-t_1)} - 
	\binom{i_1+i_2+j_2-j_1-2(N-t_1+1)}{j_2-(N-t_1)},   \\
	\left|D_{(3)}\right|\:&=\;\frac{j_2-i_2+1}{t_2+1}\binom{i_2+2t_2-j_2}{t_2}.
\end{align*}
Using the above and $\left|S_{\cap}(N,i_1,i_2) \right| = \left|D_{(1)}\right| \left|D_{(2)}\right| \left|D_{(3)}\right|  $ in Equation (\ref{eq.SCNsumsum}), the proof of part (b) is now complete.
\end{proof}     

	Notice that when $j_2=N-t_1+1$, each sum in the formula for $|S^{\nearrow}(N)|$ has only one term, namely $i_1=j_1$ in the outer sum and $i_2=j_2$ in the inner sum. 	Hence in this case
we obtain   $|D_{(1)}|\,=\,C_{j_1-1}$, $|D_{(3)}|=C_{t_2}$, and 
$\left|D_{(2)} \right|=\binom{0}{0}-\binom{0}{1}=1$, which yields the expression
\[
		\left|S^{\nearrow}(N) \right|\;=\; C_{N-t_1-j_1}\,C_{t_1-t_2-1}\,C_{j_1-1}\,C_{t_2}
		\hspace{5mm}\hbox{when $j_2=N-t_1+1$}.
\]

\section{Asymptotics of Probabilities}
   \label{sec-asymp}

The main goal of this section is to prove asymptotic formulas for the probabilities
of $S^{\bullet}(N,i,j)$, $S^{\searrow}(N)$, and $S^{\nearrow}(N)$, as described in Section  2.  The asymptotics
are based on well known asymptotics of binomial probabilities, of which the following
is a particularly useful form.

\begin{prop}\label{proplawler} (\cite{lawler}, pp.\ 61--63)  We have the relation
	\[\binom{A}{B}2^{-A}=\sqrt{\frac{2}{\pi A}}\,e^{-\frac{(2B-A)^2}{2A}}e^{O\left(\frac{1}{A}+\frac{(2B-A)^4}{A^3}\right)}\]
	and hence, if $\left|2B-A\right|\leq A^{3/4}$,
	\[\binom{A}{B}2^{-A}=\sqrt{\frac{2}{\pi A}}\,e^{-\frac{(2B-A)^2}{2A}}\left(1+O\left(\frac{1}{A}+\frac{(2B-A)^4}{A^3}\right)\right).\]
\end{prop}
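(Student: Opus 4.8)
The plan is to derive both displays directly from Stirling's formula in the form $n! = \sqrt{2\pi n}\,(n/e)^n\, e^{O(1/n)}$ (valid for $n\ge 1$), applied to each of the three factorials in $\binom{A}{B} = A!/(B!\,(A-B)!)$. Taking logarithms, and noting that the linear-in-$n$ terms $-A+B+(A-B)$ cancel, I would obtain
\[
   \log\binom{A}{B} \;=\; A\log A - B\log B - (A-B)\log(A-B) \;+\; \tfrac12\log\frac{A}{2\pi B(A-B)} \;+\; O\!\left(\tfrac1A+\tfrac1B+\tfrac1{A-B}\right).
\]
The cases $B\in\{0,A\}$, where Stirling does not apply, are trivial since then $\binom{A}{B}=1$, so I may assume $1\le B\le A-1$. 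In that range a short case split shows $1/B + 1/(A-B) = O(1/A + (2B-A)^4/A^3)$: if $B$ and $A-B$ are both of order $A$ the bound is $O(1/A)$, and otherwise one of $B,A-B$ is below $A/4$, forcing $|2B-A|>A/2$ and hence $(2B-A)^4/A^3$ to be of order at least $A$, which dominates $1/B+1/(A-B)\le 2$. Thus those error terms are already of the claimed size.

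Next I would substitute $x := 2B-A$, so $B=(A+x)/2$ and $A-B=(A-x)/2$, and set $u:=x/A$. The main term simplifies to $A\log A - B\log B - (A-B)\log(A-B) = A\log 2 - \tfrac{A}{2}\,f(u)$, where $f(u) = (1+u)\log(1+u) + (1-u)\log(1-u)$. The key computation is the Taylor expansion of $f$ at $0$: since $f'(u)=\log\frac{1+u}{1-u}$ and $f''(u)=2/(1-u^2)$, one finds $f(0)=f'(0)=f'''(0)=0$ and $f(u) = u^2 + \tfrac16 u^4 + \cdots$, so the odd-order terms vanish and $-\tfrac{A}{2}f(u) = -\tfrac{x^2}{2A} + O(x^4/A^3)$. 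This cancellation of the cubic term is what produces the error $O((2B-A)^4/A^3)$ rather than the weaker $O((2B-A)^3/A^2)$, and it is the one point where I would be careful. For the prefactor, $B(A-B) = (A^2/4)(1-u^2)$ gives $\tfrac12\log\frac{A}{2\pi B(A-B)} = \tfrac12\log\frac{2}{\pi A} - \tfrac12\log(1-u^2) = \tfrac12\log\frac{2}{\pi A} + O(u^2)$, and I would then invoke the elementary bound $u^2 = \sqrt{(1/A)(x^4/A^3)} \le \tfrac12(1/A + x^4/A^3)$, so that this $O(u^2)$ contribution is also absorbed into $O(1/A + (2B-A)^4/A^3)$.

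Collecting these estimates in the logarithm and exponentiating yields exactly the first display,
\[
   \binom{A}{B}2^{-A} \;=\; \sqrt{\frac{2}{\pi A}}\; e^{-\frac{(2B-A)^2}{2A}}\; e^{O\left(\frac1A + \frac{(2B-A)^4}{A^3}\right)}.
\]
The second display is then immediate: if $|2B-A|\le A^{3/4}$ then $(2B-A)^4/A^3\le 1$, so the exponent $O(1/A + (2B-A)^4/A^3)$ is bounded, and on a bounded set $e^{Z}=1+O(Z)$; hence the last exponential factor becomes $1 + O(1/A + (2B-A)^4/A^3)$. I do not anticipate a genuine obstacle here — this is a standard Stirling computation — and the only things demanding care are the vanishing of the cubic Taylor coefficient of $f$ and the bookkeeping that confirms every stray error term ($1/B$, $1/(A-B)$, $u^2$) fits inside the stated $O\!\left(\tfrac1A+\tfrac{(2B-A)^4}{A^3}\right)$.
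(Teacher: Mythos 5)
The paper itself gives no argument for this proposition: it is quoted from the cited pages of Lawler \cite{lawler}, where it appears among the standard local-CLT estimates for simple random walk. Your proposal therefore supplies a self-contained proof of a result the paper only cites, and the route you take — Stirling's formula for the three factorials, cancellation of the linear terms, the substitution $x=2B-A$, $u=x/A$, the even function $f(u)=(1+u)\log(1+u)+(1-u)\log(1-u)$ whose vanishing odd Taylor coefficients produce the $O\bigl((2B-A)^4/A^3\bigr)$ error, and finally $e^Z=1+O(Z)$ on a bounded set to pass from the first display to the second — is exactly the standard computation behind the cited statement. The bookkeeping for $1/B+1/(A-B)$ via the case split (both of $B,A-B$ comparable to $A$, versus one of them small, which forces $|2B-A|>A/2$ and hence $(2B-A)^4/A^3\gtrsim A$) is correct, as is the treatment of $B\in\{0,A\}$.

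Two points deserve more care, though neither is fatal. First, the bound $f(u)=u^2+O(u^4)$ must hold uniformly for $|u|\le 1-2/A$, not merely near $0$; it does, because $f(u)-u^2=\sum_{k\ge 2}u^{2k}/\bigl(k(2k-1)\bigr)$ has nonnegative coefficients, so $|f(u)-u^2|\le(2\log 2-1)\,u^4$ on all of $[-1,1]$ — worth stating, since a Taylor remainder at $0$ is a priori only local. Second, the assertion $-\tfrac12\log(1-u^2)=O(u^2)$ is false near the endpoints: for $B=1$ one has $u^2\asymp 1$ while $-\tfrac12\log(1-u^2)\asymp\log A$. This does not damage the conclusion, because in that regime $(2B-A)^4/A^3\asymp A$ dominates $\log A$; so you should run the same case split here as for $1/B+1/(A-B)$, using the AM--GM bound $u^2\le\tfrac12\bigl(1/A+x^4/A^3\bigr)$ only when, say, $|u|\le 1/2$, and absorbing the logarithmic term into $O\bigl((2B-A)^4/A^3\bigr)$ otherwise. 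With those two repairs made explicit, your argument is complete and correct.
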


The following notation will be used throughout this section.  Let 
\begin{equation}
  \label{eq-hgam}
   h(u)\;=\;\frac{\binom{2u}{u}}{2^{2u}(u+1)}\;=\;\frac{C_u}{2^{2u}}
\hspace{10mm}\hbox{and}\hspace{10mm}
\gamma(t,r)\;=\;\frac{\binom{2t-r+1}{t}}{2^{2t-r+1}}.
\end{equation}

\begin{rem}\label{remgammaandh}
The functions $h(u)$ and $\gamma(t,r)$ have the following properties.

\smallskip\noindent
(a) $\gamma(t,r)$ is decreasing in $r$ for $r \geq 1$.  This is because
\[
			\frac{\gamma(t,r+1)}{\gamma(t,r)}
			\;=\;\frac{2(t+1-r)}{2t+1-r}\;=\;\frac{2t+1-r-(r-1)}{2t+1-r}\;\leq\; 1.
\]
(b) Applying Proposition \ref{proplawler} to $h(u)$, we get $h(u)=\frac{1}{\sqrt{\pi u}(u+1)}(1+O(\frac{1}{u}))=\frac{1}{\sqrt{\pi}u^{3/2}}(1+O(\frac{1}{u}))$.

\smallskip\noindent
(c) $\frac{h(u)}{h(N)}=(\frac{N}{u})^{3/2}(1+O(\frac{1}{N}))$ if $u\asymp N$.  (This says that the
$O$ term is uniform over all $u$ and $N$ such that $cN>u>N/c$ for some fixed $c>1$.) 
\end{rem} 

Before proceeding, we
shall need the following particular form of the asymptotics of $\gamma(t,r)$.

\begin{lem}\label{gamma}
    \label{lem-gamas}
Fix $\theta\in (0,1/6)$, $\epsilon\in (0,1)$, $C>0$, and $a\in \mathbb{Z}$.  
Let $R_N\,=\,\lceil N^{1/2+\theta}\rceil$.  Then
\begin{equation}
   \label{eq.gamas1}
   \gamma(t+s,r+a) \;=\; \frac{1}{\sqrt{\pi t}}\,  
   e^{-r^2/4t}\left( 1+O\left(N^{3\theta- 1/2}\right)\right) \,,
\end{equation}
where the error term is uniform over $N$, $t$, $r$, and $s$ satisfying $|s|\leq C R_N$, 
$|r|\leq C R_N$, $\epsilon N\leq t\leq N$, and $N>N_1$ for some $N_1$.   
(Note that the error term and $N_1$
are not uniform over $\epsilon$, $a$, $C$, or $\theta$.)
\end{lem}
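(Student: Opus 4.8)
The plan is to substitute the definition of $\gamma$ directly and reduce the statement to the binomial asymptotics of Proposition \ref{proplawler}. Recall that $\gamma(t+s,r+a) = \binom{2(t+s)-(r+a)+1}{t+s}\big/ 2^{2(t+s)-(r+a)+1}$. Writing $A = 2(t+s)-(r+a)+1$ and $B = t+s$, we have $2B - A = (r+a) - 1$, a quantity bounded in absolute value by $CR_N + |a| + 1 = O(N^{1/2+\theta})$. First I would verify the hypothesis $|2B-A| \le A^{3/4}$ of the second display in Proposition \ref{proplawler}: since $A \asymp 2t \asymp N$ (using $\epsilon N \le t \le N$ and $|s| \le CR_N = o(N)$ and $|r| \le CR_N = o(N)$) and $|2B-A| = O(N^{1/2+\theta})$ with $\theta < 1/6 < 1/4$, for $N$ large enough we indeed have $|2B-A| \le A^{3/4}$. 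This gives
\[
   \gamma(t+s,r+a) \;=\; \sqrt{\frac{2}{\pi A}}\; e^{-\frac{(r+a-1)^2}{2A}}\left(1 + O\!\left(\frac{1}{A} + \frac{(r+a-1)^4}{A^3}\right)\right).
\]

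Next I would simplify each factor in turn. For the polynomial prefactor, $\sqrt{2/(\pi A)} = \sqrt{2/(\pi(2t - r - a + 2s + 1))}$; factoring out $1/\sqrt{\pi t}$ leaves $\sqrt{2t/A}$, and since $A = 2t(1 + O(N^{-1/2+\theta}))$ (because $|s|, |r|, |a|$ are all $O(N^{1/2+\theta})$ while $t \asymp N$), we get $\sqrt{2t/A} = 1 + O(N^{-1/2+\theta})$. For the exponential, I want to replace $e^{-(r+a-1)^2/(2A)}$ by $e^{-r^2/(4t)}$. Write $\frac{(r+a-1)^2}{2A} - \frac{r^2}{4t} = \frac{(r+a-1)^2}{2A} - \frac{(r+a-1)^2}{4t} + \frac{(r+a-1)^2 - r^2}{4t}$. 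The first difference is $(r+a-1)^2 \cdot \frac{2t - A}{4tA}$; since $|2t - A| = |r + a - 2s - 1| = O(N^{1/2+\theta})$, $(r+a-1)^2 = O(N^{1+2\theta})$, and $tA \asymp N^2$, this is $O(N^{3\theta - 1/2})$. The second piece is $\frac{(2r + a - 1)(a-1)}{4t} = O(N^{1/2+\theta}/N) = O(N^{\theta - 1/2})$. So the total exponent difference is $O(N^{3\theta - 1/2})$, and since $\theta < 1/6$ this tends to $0$, giving $e^{-(r+a-1)^2/(2A)} = e^{-r^2/(4t)}(1 + O(N^{3\theta-1/2}))$. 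Finally the error factor in Proposition \ref{proplawler} contributes $1 + O(N^{-1} + N^{4(1/2+\theta)}/N^3) = 1 + O(N^{4\theta - 1})$, which is absorbed into $O(N^{3\theta-1/2})$ since $4\theta - 1 < 3\theta - 1/2 \iff \theta < 1/2$. Collecting all the multiplicative error terms (each of the form $1 + O(N^{3\theta-1/2})$ or smaller) yields the claimed formula.

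I expect the main technical point to be the careful handling of the exponential factor: one must track that shifting $r+a-1$ to $r$ inside the Gaussian, and simultaneously replacing $A$ by $2t$ in the denominator, produces only an error of order $N^{3\theta-1/2}$ rather than something larger. This is exactly where the restriction $\theta < 1/6$ is used — it guarantees $3\theta - 1/2 < 0$ so the error genuinely vanishes. The uniformity claims follow automatically because every $O$-estimate above depends only on the explicit bounds $|s| \le CR_N$, $|r| \le CR_N$, $\epsilon N \le t \le N$, and on the fixed constants $a$, $C$, $\epsilon$, $\theta$; in particular the constants implicit in the $O$-terms blow up as $\epsilon \to 0$ (through $t \asymp \epsilon N$) or as $|a|, C \to \infty$, which is why the lemma explicitly disclaims uniformity in those parameters. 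The threshold $N_1$ is simply whatever is needed to make $|2B - A| \le A^{3/4}$ and to keep $A > 0$.
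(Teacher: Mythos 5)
Your proposal is correct and follows essentially the same route as the paper's proof: apply Proposition \ref{proplawler} with $A=2(t+s)-(r+a)+1$, $B=t+s$, replace $\sqrt{2/\pi A}$ by $1/\sqrt{\pi t}$ with error $O(N^{\theta-1/2})$, bound the exponent shift by $O(N^{3\theta-1/2})$, and absorb the binomial error $O(N^{4\theta-1})$, using $\theta<1/6$ to identify $N^{3\theta-1/2}$ as the dominant (and vanishing) term. Your explicit check of the hypothesis $|2B-A|\leq A^{3/4}$ and the two-piece decomposition of the exponent difference are just slightly more detailed versions of steps the paper states directly.
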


\begin{proof}  Using Proposition \ref{proplawler} with $A=2t+2s-r-a+1$ and $B=t+s$, we have
\begin{equation}
   \label{eq.gam1}
	\gamma(t+s,r+a)\;=\;\sqrt{\frac{2}{\pi (2t+2s-r-a+1)}}\,
	e^{-\frac{(r+a-1)^2}{2(2t+2s-r-a+1)}}\,
	\left(1+O\left(\frac{1}{N}+\frac{R_N^4}{N^3}\right)\right).
\end{equation}
Since $R_N^4/N^3 \asymp N^{4\theta -1}$ and $\theta>0$, the final term in the above 
expression is $(1+O(N^{4\theta-1}))$.
We also have
\begin{align}
   \label{eq.r24t}
	&\left|\frac{r^2}{4t} - \frac{(r+a-1)^2}{2(2t+2s-r-a+1)}\right|\;=\; O\left(\frac{R_N^3}{N^2}\right)
	\;=\; O(N^{3\theta-1/2}),  \hspace{5mm} \mbox{ and } \\
	\label{eq.root2t}
	& \sqrt{2t+2s-r-a+1}\;=\;\sqrt{2t\left(1+O\left(\frac{R_N}{N}\right)\right)} \;=\;
	\sqrt{2t}\,(1+O(N^{\theta-1/2})).
\end{align}
By Equations (\ref{eq.gam1}), (\ref{eq.r24t}) and (\ref{eq.root2t}), 
\begin{align*}
	\gamma(t+s,r+a)\;&=\;\frac{1}{\sqrt{\pi t}}\,
	e^{- \frac{r^2}{4t}}\,(1+O(N^{\theta-1/2}))(1+O(N^{3\theta-1/2}))(1+O(N^{4\theta-1})) \,.
\end{align*}
Since $0\,>\,3\theta-\frac{1}{2}\,>\,\max\{4\theta-1,\theta-\frac{1}{2}\}$ for $0<\theta < \frac{1}{6}$,
the lemma follows.
\end{proof}

We shall now prove Theorem \ref{propapprox1}, which asserts that, for fixed
 $\epsilon>0$ and $0< \theta < \frac{1}{6}$, we have
 \[
		P_N(S^{\bullet}(N,N-t,j))
		\;=\;\frac{N^{-3/2}}{2\sqrt{\pi}\left(1-\frac{N-t-j}{N}\right)^{3/2}\left(\frac{N-t-j}{N}\right)^{3/2}}\,
		\left(1+O(N^{3\theta-1/2})\right)
\]
for $\min\{j,t,N-t-j\}> \epsilon N$.

\begin{proof}[Proof of Theorem \ref{propapprox1}] 
For $j<N-t$, we know from Theorem \ref{propS_A(N,N-t,j)} that
	\begin{align*}
		P_N(S^{\bullet}(N,N-t,j))=\sum_{i_0=\max\left\{1,j-t\right\}}^{j}\frac{C_{N-t-i_0}}{C_N}\frac{(j-i_0+1)^2}{j(t+1)}\binom{i_0+2t-j}{t}\binom{i_0+j-2}{j-1}.
	\end{align*}
	Let $r=j-i_0+1$, that is, $i_0=j-r+1$. Then
	\begin{align*}
		P_N(S^{\bullet}(N,N-t,j))&=\sum_{r=1}^{\min\left\{j,t+1\right\}} \frac{r^2}{j(t+1)}\frac{C_{N-t-j+r-1}}{C_N}
		\binom{2t-r+1}{t}\binom{2j-r-1}{j-1}
		\\
		&=\frac{1}{4}\sum_{r=1}^{\min\left\{j,t+1\right\}} \frac{r^2}{j(t+1)}\frac{C_{N-t-j+r-1}}{2^{2(N-t-j+r-1)}}\frac{2^{2N}}{C_N}\frac{\binom{2t-r+1}{t}}{2^{2t-r+1}}\frac{\binom{2j-r-1}{j-1}}{2^{2j-r-1}} 
		\\
		&=\frac{1}{4}\sum_{r=1}^{\min\left\{j,t+1\right\}}\frac{r^2}{j(t+1)}\,
		\gamma(t,r)\,\gamma(j-1,r)\,\frac{h(N-t-j+r-1)}{h(N)}.
	\end{align*}

	To analyze the sum we choose the truncation point  $R_N \,=\,\lceil N^{1/2+\theta}\rceil$
and consider the sums $S'_N=\sum_{r=1}^{R_N}$ and $T_N=\sum_{r=R_N+1}^{\min\left\{j,t+1\right\}}$ separately.  Observe that $R_N\,=\,o(\min\{j,t\})$.

\smallskip
 We shall first prove that $T_N$ is very small.  
For all $r > R_N$ we have 
\begin{align*}
	\gamma(t,r) \; &\leq \; \gamma(t,R_N+1)     \hspace{8mm}
	\hbox{(by Remark \ref{remgammaandh}(a))}  \\
	& =\; \frac{\binom{2t-R_N}{t}}{2^{2t-R_N}} \\
	&= \; O\left(e^{-\frac{R_N^2}{2(2t-R_N)}}\right)   \hspace{5mm}\hbox{(by Proposition 
	\ref{proplawler} with $A=2t-R_N$ and $B=t$)}.
\end{align*} 
Since $2t-R_N \leq 2N$ and since $R_N \,=\,\lceil N^{1/2+\theta}\rceil$, we have 
\begin{equation}
  \label{eq.gamexp}
     \gamma(t,r)\;=\; O\left(e^{-(N^{1+2\theta})/4N}\right)\;=\;O\left(e^{-N^{\theta}}\right).
\end{equation}
By Remark \ref{remgammaandh}(c) and the fact that $\frac{j}{N} < \frac{N-t}{N} - \epsilon$, we obtain for $r>R_N$ that
\begin{align}\label{eq.h(N)}
		\frac{h(N-t-j+r-1)}{h(N)} \;&=\; \frac{N^{3/2}}{(N-t-j+r-1)^{3/2}}\left(1+O\left(\frac{1}{N}\right)\right)	\notag \\
		&\leq \;\left(\frac{N}{N-t-j}\right)^{3/2}\left(1+O\left(\frac{1}{N}\right)\right)  \notag \\
		&<\; \epsilon^{-3/2}\left(1+O\left(\frac{1}{N}\right)\right) \;\;\;=\;\;\;O(1).
			\end{align}
Moreover, $\gamma(j-1,r) \leq 1$ and  $\frac{r^2}{j(t+1)} \leq 1$ (since 
$r \leq \min\left\{t+1,j\right\}$ in the sum). 
Therefore
\begin{equation}
  \label{eq.TNsmall}
	T_N\;=\;\frac{1}{4}\sum_{r=R_N+1}^{\min\left\{t+1,j\right\}}\frac{r^2}{j(t+1)}\,
				\gamma(t,r)\gamma(j-1,r)\,\frac{h(N-t-j-1+r)}{h(N)}\;=\;
				O\left(Ne^{-N^{\theta}}\right).
\end{equation}

Next we approximate $S'_N.$ Using Lemma \ref{lem-gamas} and Remark 
\ref{remgammaandh}(c), we rewrite the truncated sum as
\begin{align*}
	S_N^{'}\;&=\;\frac{1}{4}\sum_{r=1}^{R_N}\frac{r^2}{j(t+1)}
				\,\gamma(t,r)\gamma(j-1,r)\,\frac{h(N-t-j+r-1)}{h(N)} \\
				&=\;\frac{1}{4 \pi}\sum_{r=1}^{R_N}\frac{r^2}{j(t+1)}\frac{e^{-\frac{r^2}{4t}}}{\sqrt{ t}}\frac{e^{-\frac{r^2}{4j}}}{\sqrt{j}}\left(\frac{N}{N-t-j+r-1}\right)^{3/2}(1+O(N^{3\theta-1/2}))^2
				\left(1+O\left(\frac{1}{N}\right)\right)   \\
				&=\;\frac{1}{4\pi}\sum_{r=1}^{R_N}\frac{r^2}{j^{3/2}t^{3/2}(\frac{N-t-j}{N})^{3/2}} e^{-\frac{r^2}{4t}}e^{-\frac{r^2}{4j}} (1+O(N^{3\theta-1/2})),
\end{align*}
where the last step used $t+1=t\,(1+O(N^{-1}))$ and 
\[
	 (N-t-j+r-1)^{3/2}\,=\;  (N-t-j)^{3/2}\left(1+\frac{r-1}{N-t-j}\right)^{3/2}\,=\;
	 (N-t-j)^{3/2}\left(1+O\left(\frac{R_N}{N}\right)\right) .
\]

Define $X_r:=\frac{r}{\sqrt{N}}$ for $r=1,2\ldots$  Then
\begin{equation}
     \label{eq.sprime}
	N^{3/2}S_N^{'}\;=\;\frac{1}{4 \pi}\sum_{r=1}^{R_N}	\; \frac{X_r^2\exp\left(\frac{-X_r^2}{4t/N}\right)\exp\left(\frac{-X_r^2}{4j/N}\right)}{\left(\frac{j}{N}\right)^{3/2}\left(\frac{t}{N}\right)^{3/2}\left(\frac{N-t-j}{N}\right)^{3/2}}\;\frac{1}{\sqrt{N}}(1+O(N^{3\theta-1/2})).
\end{equation}
Let  $K=\frac{1}{4}(\frac{N}{t}+\frac{N}{j})$.   By our assumptions, $K\asymp 1$.
By Proposition \ref{thmapprox} with $\Delta=\frac{1}{\sqrt{N}}$, we get
\begin{equation*}
	\sum_{r=1}^{R_N}X_r^2e^{-KX_r^2}N^{-1/2}
		\;=\;\int_{0}^{R_N/\sqrt{N}}z^2e^{-Kz^2}dz\;+\;O(N^{\theta-1/2}).
\end{equation*}
Hence, by Lemma \ref{remintegral}, 
\begin{align*}
	\sum_{r=1}^{R_N}X_r^2e^{-KX_r^2}N^{-1/2} \;&
		=\; \frac{\sqrt{\pi}}{4K^{3/2}}\left(1+O\left(e^{-R_N\sqrt{K/N}}\right)\right)\;+\;
		O\left(N^{\theta-1/2}\right) \\
		&=\; \frac{\sqrt{\pi}}{4K^{3/2}}\;+\;O(N^{\theta-1/2}).
\end{align*}
As we plug this into Equation (\ref{eq.sprime}) we obtain
\begin{equation*}
	S_N^{'}\;=\; N^{-3/2} \left(\frac{1}{2\sqrt{\pi}\left(1-\frac{N-t-j}{N}\right)^{3/2}\left(\frac{N-t-j}{N}\right)^{3/2}}+O(N^{\theta-1/2})\right) 	\left(1+O\left(N^{3\theta-1/2}\right)\right) \,.
\end{equation*}
Recalling Equation (\ref{eq.TNsmall}), the theorem follows.
\end{proof}

\begin{rem}\label{remrewriteapprox1}
For future reference, we note that Theorem \ref{propapprox1} and Remark \ref{remgammaandh}(b) show that for 
$0\pprec j\pprec N-t\pprec N$
we have
	\begin{align}
		\label{eq:1}
		\sum_{r=1}^{\min\left\{j,t+1 \right\}} & \frac{r^2}{j(t+1)}\,\gamma(t,r)\,\gamma(j-1,r)\,h(N-t-j+r-1)\\
\nonumber
		&=\; 4\left(\frac{h(N)N^{3/2}}{2\sqrt{\pi}(t+j)^{3/2}(N-t-j)^{3/2}}\right)\left(1+O(N^{3\theta-1/2})\right)\\
		\label{eq:3}
		&=\;\frac{2}{\pi (t+j)^{3/2}(N-t-j)^{3/2}}\left(1+O(N^{3\theta-1/2})\right)\left(1+O\left(\frac{1}{N}\right)\right)\\
		\label{eq:4}
		&=\,\sum_{r=1}^{\min\left\{j,t+1 \right\}}  \frac{r^2}{j(t+1)}\,\gamma(t,r)\,\gamma(j-1,r)\,
		\frac{(1+O(\frac{1}{N}))}{\sqrt{\pi}(N-t-j+r-1)^{3/2}}.
	\end{align}
\end{rem}

Next we shall prove Theorem \ref{propapprox2}, which says that, for fixed 
$\epsilon>0$ and $0<\theta<\frac{1}{6}$, we have
 \[
		P_N\left(S^{\searrow}(N)\right)\;=\;
	\frac{1}{4\pi}\frac{N^{-3}(1+O(N^{3\theta-1/2}))}{\left(\frac{(N-t_2-j_2)-(N-t_1-j_1)}{N}\right)^{3/2}\left(\frac{N-t_1-j_1}{N}\right)^{3/2}\left(1-\frac{N-t_2-j_2}{N}\right)^{3/2}}
\]
for $\min \{j_2,j_1-j_2,N-t_1-j_1,t_1-t_2,t_2\}> \epsilon N$.

\begin{proof}[Proof of Theorem \ref{propapprox2}]
	Let $u=j_2-i_1+1$ and $r=j_1-j_2-i_0+1$ in Theorem \ref{propS_B(N)}.  Then we obtain
{\allowdisplaybreaks	
\begin{align*}
		P_N\left(S^{\searrow}(N)\right)
		\;=\;&\sum_{u=1}^{\min\left\{j_2,t_2+1\right\}}\frac{u^2}{j_2(t_2+1)}\binom{2t_2-u+1}{t_2}\binom{2j_2-u-1}{j_2-1}	\\
		&\hspace{3mm} \sum_{r=1}^{\min\left\{j_1-j_2,t_1-t_2\right\}}\frac{r^2}{(j_1-j_2)(t_1-t_2)}\binom{2(t_1-t_2-1)-r+1}{t_1-t_2-1}\times \\
		&\hspace{3mm} \binom{2(j_1-j_2-1)-r+1}{j_1-j_2-1}\frac{C_{N-t_1-j_1+u+r-1}}{C_N}	\\
		=\;& \frac{1}{2^4}\sum_{u=1}^{\min\left\{j_2,t_2+1\right\}}\frac{u^2}{j_2(t_2+1)} \frac{\binom{2t_2-u+1}{t_2}}{2^{2t_2-u+1}} \frac{\binom{2j_2-u-1}{j_2-1}}{2^{2j_2-u-1}} \\
		&\sum_{r=1}^{\min\left\{j_1-j_2,t_1-t_2\right\}} \frac{r^2}{(j_1-j_2)(t_1-t_2)} \frac{\binom{2(t_1-t_2-1)-r+1}{t_1-t_2-1}}{2^{2(t_1-t_2-1)-r+1}} \frac{\binom{2(j_1-j_2-1)-r+1}{j_1-j_2-1}}{2^{2(j_1-j_2-1)-r+1}} \frac{2^{2N}}{C_N} \times \\
		&\hspace{3mm} \frac{C_{N-t_1-j_1+u+r-1}}{2^{2(N-t_1-j_1+u+r-1)}}   
		\\
	=\; & \frac{1}{2^4}\sum_{u=1}^{\min\left\{j_2,t_2+1\right\}}\frac{u^2}{j_2(t_2+1)}
	\, \gamma(t_2,u)\gamma(j_2-1,u) \,\times	\\
	&\sum_{r=1}^{\min\left\{j_1-j_2,t_1-t_2\right\}} 			
	\frac{r^2\gamma(t_1-t_2-1,r)\gamma(j_1-j_2-1,r) }{(j_1-j_2)(t_1-t_2)}
	\,\frac{h(N-t_1-j_1+u+r-1)}{h(N)}.
\end{align*}}

First we consider the inner sum. We substitute $t$ for $t_1-t_2-1$, $j$ for $j_1-j_2$, and $N_u$ for 
$N-t_2-j_2+u-1$, and use Remark \ref{remrewriteapprox1} [Equations (\ref{eq:1}) and (\ref{eq:3})] and Remark \ref{remgammaandh}(b) to conclude that
\begin{align*}	
	\sum_{r=1}^{\min\left\{j_1-j_2,t_1-t_2\right\}}&
		\frac{r^2\,\gamma(t_1-t_2-1,r)\gamma(j_1-j_2-1,r)}{(j_1-j_2)(t_1-t_2)}\,
		\frac{h(N-t_1-j_1+u+r-1)}{h(N)}	\\
	&=\;\sum_{r=1}^{\min\left\{j,t+1\right\}}
		\frac{r^2}{j(t+1)}\,\gamma(t,r)\gamma(j-1,r)\,\frac{h(N_u-t-j+r-1)}{h(N)} \\
	&=\frac{2(1+O(N^{3\theta-1/2}))}	{\pi(t+j)^{3/2}(N_u-t-j)^{3/2}} \sqrt{\pi}N^{3/2}\left(1+O\left(\frac{1}{N}\right)\right)    \hspace{6mm}\hbox{(since $N_u\geq \epsilon N$).}
\end{align*}
Hence, we obtain
\begin{align*}
	P_N\left(S^{\searrow}(N)\right) \;& =\;
	\frac{N^{3/2}}{8} \frac{(1+O(N^{3\theta-1/2}))}{(t+j)^{3/2}}\times \\
	&\sum_{u=1}^{\min\left\{j_2,t_2+1\right\}}
	\frac{u^2}{j_2(t_2+1)}\,\gamma(t_2,u)\,\gamma(j_2-1,u)\,
	\frac{(1+O(\frac{1}{N}))}{\sqrt{\pi}(N_u-t-j)^{3/2}}.
\end{align*}

We notice that $N_u-t-j=(N-t-j)-t_2-j_2+u-1$. 
Finally, we apply 
Remark \ref{remrewriteapprox1} [Equations (\ref{eq:3}--\ref{eq:4})] one more time, replacing 
$N$ by $N-t-j$, $t$ by $t_2$, and $j$ by $j_2$,  obtaining
\begin{align*}
	P_N \left(S^{\searrow}(N)\right)\;& =\;\frac{N^{3/2}(1+O(N^{3\theta-1/2}))}{8(t+j)^{3/2}}
	\frac{2(1+O(N^{3\theta-1/2}))}{\pi (t_2+j_2)^{3/2}((N-t-j)-t_2-j_2)^{3/2}} \\
	& =\;\frac{N^{3/2}(1+O(N^{3\theta-1/2}))}{4\pi(t_1-t_2+j_1-j_2-1)^{3/2}(t_2+j_2)^{3/2}(N-t_1-j_1+1)^{3/2}} \,.
\end{align*}
Theorem \ref{propapprox2} follows.
\end{proof} 

Next we prove Theorem \ref{propapprox3}, which says that, for fixed
$\epsilon >0$ and $0<\theta<\frac{1}{6}$, we have
	\begin{align*}
		P_N(S^{\nearrow}(N))\;=
		\frac{N^{-3}(1+O(N^{3\theta-1/2}))}{4\pi\left(1-\frac{(N-t_1-j_1)+(N-t_2-j_2)}{N}\right)^{3/2}
		\left(\frac{N-t_1-j_1}{N}\right)^{3/2}\left(\frac{N-t_2-j_2}{N}\right)^{3/2}
		}
	\end{align*}
for $\min\{j_1,N-t_1-j_1,j_2-(N-t_1), N-t_2-j_2,t_2\}> \epsilon N$.

\begin{proof}[Proof of Theorem \ref{propapprox3}] 
By Theorem \ref{propS_C(N)},
\begin{align*}
		P_N(S^{\nearrow}(N)) \;=& 	
		\sum_{i_1=\max\left\{1,j_1-j_2+N-t_1+1\right\}}^{j_1} \sum_{i_2=\max\left\{N-t_1+1,j_2-t_2\right\}}^{j_2} \frac{C_{N-t_1-i_1}C_{N-t_2-i_2}}{C_N} \times \\
		& \frac{(j_1-i_1+1)(j_2-i_2+1)}{j_1(t_2+1)} \binom{j_1+i_1-2}{j_1-1}	\binom{i_2+2t_2-j_2}{t_2} \times \\
		& \left(\binom{i_1+i_2+j_2-j_1-2(N-t_1+1)}{j_2+(i_1-j_1-1)-(N-t_1)}-\binom{i_1+i_2+j_2-j_1-2(N-t_1+1)}{j_2-(N-t_1)} \right). 
	\end{align*}

	Let $r_1=j_1-i_1+1$ and $r_2= j_2-i_2+1$.
As in the proof of Theorem \ref{propapprox2}, we use $h(u)$ and $\gamma(t,r)$ to rewrite the probability as 
 \begin{align*}
		P_N&(S^{\nearrow}(N))\; =\; \frac{1}{2^4} \,\times \\
	&\sum_{r_1=1}^{\min\left\{j_1,j_2-N+t_1\right\}}\sum_{r_2=1}^{\min\left\{t_2+1,j_2-N+t_1 \right\}} \, \frac{r_1}{j_1} \frac{r_2}{t_2+1} h(N-t_1-j_1+r_1-1) 	\gamma(j_1-1,r_1) \gamma(t_2,r_2) \, \times
	\\
	& \frac{h(N-t_2-j_2+r_2-1)}{h(N)} \left( \gamma(j_2+t_1-N-r_1,r_2-r_1+1) - \gamma(j_2+t_1-N,r_1+r_2+1) \right).
\end{align*}
To analyze the sum we choose the truncation point $R_N =\lceil N^{1/2+\theta}\rceil$  and consider the sums $S'_N=\sum_{r_1=1}^{R_N}\sum_{r_2=1}^{R_N}$ and  
$T_N=P_N(S^{\nearrow}(N))-S^{'}_N$.

We  first show that $T_N$ is very small.  We view $T_N$ as a sum over pairs $(r_1,r_2)$
in which at least one of $r_1$ or $r_2$ is greater than $R_N$. If $r_1 > R_N$, then 
$\gamma(j_1-1,r_1)\,=\,O(e^{-N^{\theta}})$ (recalling the argument for Equation (\ref{eq.gamexp})). 
Similarly, if $r_2 > R_N$, then $\gamma(t_2,r_2)\,=\,O(e^{-N^{\theta}})$. 
As in Equation (\ref{eq.h(N)}), we know that 
$\displaystyle \frac{h(N-t_2-j_2+r_2-1)}{h(N)}\,=\,O(1)$.
Also, $ h(N-t_1-j_1+r_1-1)\leq 1$, $\gamma (t_2,r_2) \leq 1$, and $\gamma(j_1-1,r_1) \leq 1$.
Moreover, for $r_1 \leq \min\left\{j_1,j_2-N+t_1\right\}$ and $r_2 \leq \min\left\{t_2+1,j_2-N+t_1 \right\} $ we get $\displaystyle \frac{r_1}{j_1}\frac{r_2}{t_2+1} \leq 1$.  Thus the largest term in 
$T_N$ is 
$O(e^{-N^{\theta}})$, and hence $T_N\,=\,O(N^2)\,O(e^{-N^{\theta}})\,=\,O(e^{-N^{\theta}/2})$.

Next, we approximate $S_N^{'}$.
For the rest of the proof, we will write $-\xi=3\theta-1/2$.
By Lemma \ref{lem-gamas} we have that
$\displaystyle \gamma(t_2,r_2)=\frac{1}{\sqrt{\pi t_2}}e^{- \frac{r_2^2}{4t_2}}(1+O(N^{-\xi}))$, 
$ \displaystyle \gamma(j_1-1,r_1)=\frac{1}{\sqrt{\pi j_1}}e^{- \frac{r_1^2}{4j_1}}(1+O(N^{-\xi}))$
and 
\begin{align*} 
\gamma(&j_2+  t_1-N-r_1,r_2-r_1+1)- \gamma(j_2+t_1-N,r_1+r_2+1) \\
&=\;\frac{1}{\sqrt{\pi}\sqrt{(j_2+t_1-N)}}\left(e^{-\frac{(r_2-r_1)^2}{4(j_2+t_1-N)}}-e^{-\frac{(r_1+r_2)^2}{4(j_2+t_1-N)}}+O(N^{-\xi})\right)   \,.
\end{align*}
(For the difference, we need to be careful about relative errors:  we use 
$A_N(1+O(N^{-\xi}))-B_N(1+O(N^{-\xi})) \,=\, A_N-B_N+\max\{A_N,B_N\}O(N^{-\xi})$.)
We also use Remark \ref{remgammaandh}(b,c), 
and rewrite $S_N^{'}$ as 
\allowdisplaybreaks {  
\begin{align*}
S_N^{'}\;=&\;\frac{1}{2^4}\sum_{r_1=1}^{R_N}\sum_{r_2=1}^{R_N} \frac{r_1}{j_1} \frac{r_2}{t_2+1} 
\frac{e^{- r_1^2/4j_1}\, e^{- r_2^2/4t_2}}{\sqrt{\pi j_1} \sqrt{\pi t_2} \sqrt{\pi}(N-t_1-j_1+r_1-1)^{3/2}} \times
\\
&\hspace{5mm} \frac{1}{\sqrt{\pi (j_2+t_1-N)}}\left(e^{-\frac{(r_2-r_1)^2}{4(j_2+t_1-N)}}
- e^{-\frac{(r_1+r_2)^2}{4(j_2+t_1-N)}} + O(N^{-\xi}) \right) \times \\
& \hspace{5mm}
  \left(\frac{N}{N-t_2-j_2+r_2-1}\right)^{3/2}\left(1+O\left(N^{-1}\right)\right)(1+O(N^{-\xi}))
\\
=&\;\frac{1}{2^4\pi^{2}}\sum_{r_1=1}^{R_N}\sum_{r_2=1}^{R_N} \frac{r_1}{j_1} \frac{r_2}{t_2+1} 
\frac{e^{-r_1^2/4j_1}\, e^{- r_2^2/4t_2} \,(1+O(N^{-\xi}))}
{\sqrt{j_1} \sqrt{t_2} (N-t_1-j_1)^{3/2}(\frac{N-t_2-j_2}{N})^{3/2}} \times\\
&\hspace{5mm} \frac{1}{\sqrt{(j_2+t_1-N)}}\left(e^{-\frac{(r_2-r_1)^2}{4(j_2+t_1-N)}}-e^{-\frac{(r_1+r_2)^2}{4(j_2+t_1-N)}}+ O(N^{-\xi})\right).
\end{align*}}

Define $X_{r_1}=\frac{r_1}{\sqrt{N}}$ and $X_{r_2}=\frac{r_2}{\sqrt{N}}$.  Then
\begin{align*}
	&N^3S_N^{'} = \frac{1}{2^4\pi^{2}} \sum_{r_1=1}^{R_N}\sum_{r_2=1}^{R_N} 	\frac{X_{r_1}X_{r_2} \exp(\frac{-X_{r_1}^2}{4j_1/N}) \exp(\frac{-X_{r_2}^2}{4t_2/N})  (1+O(N^{-\xi}))} 	{\left(\frac{j_1}{N}\right)^{3/2}\left(\frac{t_2}{N}\right)^{3/2}\left(\frac{N-t_1-j_1}{N}\right)^{3/2}\left(\frac{N-t_2-j_2}{N}\right)^{3/2}\left(\frac{j_2+t_1-N}{N}\right)^{1/2}} \times \\
	& \left(\exp\left(\frac{-(X_{r_2}-X_{r_1})^2}{\frac{4(j_2+t_1-N)}{N}}\right)-\exp\left(\frac{-(X_{r_1}+X_{r_2})^2}{\frac{4(j_2+t_1-N)}{N}}\right)+ O(N^{-\xi})\right) \frac{1}{\sqrt{N}} \frac{1}{\sqrt{N}}.
\end{align*}

Let $K_1=\frac{N}{4t_2}$, $K_2=\frac{N}{4j_1}$ and $K_3=\frac{N}{4(j_2+t_1-N)}$. Hence, by 
Proposition \ref{thmapprox2}, 
\begin{align*}
	& N^3 S_N^{'} \; = 
	 \\ &
	\frac{\int_{0}^{R_N/\sqrt{N}}\int_{0}^{R_N/\sqrt{N}} xye^{-K_1x^2}e^{-K_2y^2}
	(e^{-K_3(y-x)^2}-e^{-K_3(x+y)^2}+O(N^{-\xi}))dx\,dy + O(N^{-\xi-\theta})} {
	2^4\,\pi^{2}\,\left(\frac{j_1}{N}\right)^{3/2}\left(\frac{t_2}{N}\right)^{3/2}
	\left(\frac{N-t_1-j_1}{N}\right)^{3/2}\left(\frac{N-t_2-j_2}{N}\right)^{3/2}
	\left(\frac{j_2+t_1-N}{N}\right)^{1/2}}   \\
	& \hspace{11mm} \times \left(1+O(N^{-\xi})\right) \,.
\end{align*}
Hence, we apply Lemma \ref{doubleintegral} with $w_1=w_2=\frac{R_N}{\sqrt{N}}$ and obtain that 
\begin{align*}
	P_N(S^{\nearrow}(N)) 
	\;=\; \frac{N^{-3}(1+O(N^{-\xi}))}{4\pi\left(1-\frac{(N-t_1-j_1)+(N-t_2-j_2)}{N}\right)^{3/2}\left(\frac{N-t_1-j_1}{N}\right)^{3/2}\left(\frac{N-t_2-j_2}{N}\right)^{3/2}}.
\end{align*} 
\end{proof}

\section{The Lower Right Corner}
    \label{sec-RW}

We begin with the proof of Proposition \ref{k-1-case}.
Recall the function $\rho$ from Definition \ref{def-rho}.

\begin{proof}[Proof of Proposition  \ref{k-1-case}](a)  Fix $a,b\in \mathbb{N}$.  We shall 
prove that $P_N(S^{\bullet}(N,N{-}a{+}1,b) ) = \rho(a,b)(1+O(N^{-1})).$ 
For  $ N> a+b$, we have from Theorem \ref{propS_A(N,N-t,j)} that
\begin{equation*}
\frac{\left|S^{\bullet}(N,N{-}a{+}1,b)\right|}{C_N} 
\;= \sum_{i_0=\max \left\{ 1,b-a+1 \right\}}^{b} \frac{C_{N-a-i_0+1}}{C_N} \frac{(b-i_0+1)^2}{ba}\binom{i_0+2(a{-}1)-b}{a-1}\binom{i_0{+}b{-}2}{b-1} \,.
\end{equation*}
By the fact that $\frac{C_k}{C_{k+1}}= \frac{1}{4}(1+\frac{3}{2k+1})$ and since $i_0$ is bounded, we get that
\begin{align*}
P_N(S^{\bullet}(N,&N{-}a{+}1 ,b)) \\  & = 
\sum_{i_0=\max \left\{ 1,b-a+1 \right\}}^{b} \frac{(1+O(N^{-1}))}{4^{i_0+a-1}}\frac{(b-i_0+1)^2}{ba}\binom{i_0+2(a{-}1)-b}{a-1}\binom{i_0+b-2}{b-1} \\
& =\; \rho(a,b)\,(1+O(N^{-1}))\,.
\end{align*}

\noindent
(b) The proof is by induction on $k$. The $k=1$ case is part (a). Assume that the result holds for $k-1$.
Assume $N$ is large enough that $B_m <  N-A_m+1 $ for all $m = 1, \ldots, k$. 
By Theorem \ref{propS_B(N)}, for all $k \geq 2$ we have that
\begin{align*}
& \frac{\left|S^{\searrow k} 
(N,N{-}A_k{+}1,\ldots, N{-}A_1{+}1,B_k, \ldots,B_1)  \right|}{C_N} \\
& =\sum_{i = \max \left\{1,B_1{-}A_1{+}1 \right\}}^{B_1} \frac{\left|S^{\Box}(i+A_1-1,i,B_1)\right|}{C_N} \left|S^{\searrow k-1}(\tilde{N},\tilde{N}{-}\tilde{A}_k{+}1,\ldots,\tilde{N}{-}\tilde{A}_2{+}1,\tilde{B}_k,\ldots,\tilde{B}_2)  \right|
\end{align*}
where $\tilde{N} \equiv \tilde{N}(i) = N{-}A_1{-}i{+}1$, $\tilde{A}_m= A_m - A_1 $, 
$\tilde{B}_m = B_m-B_1 $ for all $2 \leq m \leq k$.  Then
\begin{align*}
 P_N&(S^{\searrow k}(N,N-A_k+1,\ldots,N-A_1+1,B_k,\ldots,B_1)) \\ 
& = \sum_{i = \max \left\{1,B_1-A_1+1 \right\}}^{B_1}\left|S^{\Box}(i+A_1-1,i,B_1)\right| \frac{C_{\tilde{N}}}{C_N} \times \\
& \hspace{15mm} \frac{\left|S^{\searrow k-1}(\tilde{N},\tilde{N}{-}\tilde{A}_k{+}1,\ldots,
  \tilde{N}{-}\tilde{A}_2{+}1,\tilde{B}_k,\ldots,\tilde{B}_2)  \right|}{C_{\tilde{N}}}\\
& = \sum_{i = \max \left\{1,B_1-A_1+1 \right\}}^{B_1} \frac{\left|S^{\Box}(i+A_1-1,i,B_1)\right|}{4^{i+A_1-1}}(1+O(N^{-1}))
 P_{\tilde{N}}(\sigma_{\tilde{N}-\tilde{A}_2+1}=\tilde{B}_2, \ldots,\sigma_{\tilde{N}-\tilde{A}_{k}+1}=\tilde{B}_{k}) \\
& = \sum_{i = \max \left\{1,B_1-A_1+1 \right\}}^{B_1} \frac{\left|S^{\Box}(i+A_1-1,i,B_1)\right|}{4^{i+A_1-1}}(1+O(N^{-1}))
\times \prod_{v=2}^{k} \rho(a_v,b_v) \\ 
& = \left[ \prod_{v=1}^{k} \rho(a_v,b_v) \right] (1+O(N^{-1})) \,.
\end{align*}
The last two equations follow by the inductive step assumption. 
\end{proof}

Our next task is to prove Theorem \ref{thm-limX}, which says that the random variables 
$\{X_q^N\}$ have a limit $\{X_q\}$ as $N\rightarrow\infty$.  This is facilitated with 
some notation.

Let $u=(u_1,u_2), v=(v_1,v_2) \in \mathbb{Z}^2$. We write $u \nwarrow v $ if 
$u$ is ``northwest'' of $v$, i.e.\ if $u_1 < v_1$ and $u_2 > v_2$. 
Thus $Q = \left\{v \in \mathbb{Z}^2: v \nwarrow 0 \right\}$. Let Seq$\nwarrow$ be the set of all finite and infinite subsets $T= \left\{t^{(1)}, t^{(2)}, t^{(3)}, \ldots \right\}$ of $Q$ with 
$t^{(i+1)}\nwarrow t^{(i)}$ for each $i$. 

For finite subsets $D$ and $F$ of $Q$, let 
\[    \theta_N(D,F) \,=\,P_N(X^N_q=1 \; \forall q\in D, \textrm{ and }
              X^N_r=0 \;\forall r\in F ) \hspace{5mm}\hbox{for $N\in \mathbb{N}$},
\]
and let $\theta(D,F)\,=\lim_{N\rightarrow\infty}\theta_N(D,F)$ if this limit exists.

\begin{proof}[Proof of Theorem \ref{thm-limX}]
By Kolmogorov's Extension Theorem, it suffices to prove that the limit 
$\theta(D,F)$ exists for all disjoint finite subsets $D$ and $F$ of $Q$.
Proposition \ref{k-1-case} shows that $\theta(D,\emptyset)$ exists whenever $D$ is
a finite subset of Seq$\nwarrow$.  If $-i_1<-i_2$ and $j_1<j_2$, then Proposition 
\ref{propS_C(N)}(a) shows that $\theta_N(\{(-i_1,j_1),(-i_2,j_2)\},\emptyset) \,=\,0$ for
sufficiently large $N$.  It follows that $\theta(D,\emptyset)$ exists and equals 0 whenever
$D$ is a finite subset of $Q$ that is \underline{not} in Seq$\nwarrow$.

Let $D$ and $F$ be disjoint finite subsets of $Q$.  The following argument is a
generalization of the proof in Kingman \cite{Kingman} for one-dimensional regenerative 
sequences.  We have
\begin{align}
   \nonumber
   \theta_N(D,F)\;&=\;  E_N\left(  \left(\prod_{q\in D}X^N_q \right)\,\prod_{r\in F}(1-X^N_r)\right)   \\
      \nonumber
       & = \;   E_N\left(  \left(\prod_{q\in D}X^N_q \right)\,\sum_{G\subset F} \prod_{s\in G}(-X^N_s)
           \right)   \\
           \nonumber
       & = \;  \sum_{G\subset F} (-1)^{|G|}E_N\left(\prod_{s\in D\cup G}X^N_s \right)  \\
       \label{eq-incexc}
       & = \;  \sum_{G\subset F} (-1)^{|G|} \theta_N(D\cup G,\emptyset).
\end{align}
  From the previous paragraph,
we know that the final expression converges as $N\rightarrow\infty$.  Hence $\theta_N(D,F)$
converges. 
\end{proof}

Having proven the above theorem, we know that 
\[    \theta(D,F) \,=\,P_{\infty}(X_q=1 \; \forall q\in D, \textrm{ and }
              X_r=0 \;\forall r\in F ) \hspace{5mm}\hbox{for finite $D,F\subset Q$}.
\]

Some properties of the limiting collection $\{X_{q}\}$ follow immediately from 
Proposition \ref{k-1-case}.    In particular, $P_{\infty}(X_{(-i,j)}=1)=\rho(i,j)=1-P_{\infty}(X_{(-i,j)}=0)$.
More generally, 
we see from Proposition \ref{k-1-case}(b) that if $T$ is a finite subset of $Q$ of the form
\begin{align}
   \nonumber
  T\,=\,& \{ (-A_m,B_m) \,:\,m=1,\ldots,k\}  \hspace{3mm}\hbox{where}  \\
  \label{T-decr}
  &A_m \,=\, \sum_{l=1}^m a_l, \;\, B_m \,=\, \sum_{l=1}^m b_l \hbox{ for }m=1,\ldots,k  
   \hbox{ and }a_1,\ldots,a_k,b_1,\ldots,b_k \in \mathbb{N}
\end{align}
(in particular  $T\in$ Seq$\nwarrow$), 
then $\theta(T,\emptyset) \,=\, \prod_{m=1}^k\rho(a_m,b_m)$.  As we mentioned in section
\ref{sec-results},
this is a kind of two-dimensional regenerative property that corresponds to the 
fact (Theorem \ref{thm-RW}, proven below) that the random set of 
points $q$ where $X_q$ is $1$ follows the law of a trajectory of a random walk that can 
only move up and left---i.e., a two-dimensional analogue of a renewal process.
Another special case we consider is $\theta(T,B(T))$ where $B(T)= \left[-A_k,-1\right] \times \left[1,B_k\right] \setminus T$; this is the probability that $T$ is exactly the set
of locations of all the 1's in the smallest rectangle containing $T$ and the bottom right corner
of $Q$.
Proposition \ref{prop64} below shows that $\theta(T,B(T))$ also has a product form.
\begin{notation}
For $T$ of the form (\ref{T-decr}) let
\begin{align*}
\Lambda_N(T)= & \left\{ \sigma \in S_N(312): \sigma_{N-A_m+1}=B_m  \hspace{3mm}\hbox{for}\hspace{3mm}m=1,\ldots,k \hspace{3mm} \mbox{and}\hspace{3mm} \sigma_{N-t+1} >B_k \hspace{3mm} \mbox{for} \right. \\ 
& \left . t \in \left[1,A_k\right] \setminus   \left\{A_1,\ldots,A_k\right\} \right\}. 
\end{align*}
\end{notation}

\begin{thm}\label{theorem63} 
Let $T$ be of the form (\ref{T-decr}). Then for $N\geq A_k+B_k$,
\begin{align*}
P_N(\Lambda_N(T))\;=\;\frac{C_{N-A_k-B_k+k}}{C_N}\prod_{m=1}^{k}C_{a_m-1}C_{b_m-1}.
\end{align*}
\end{thm}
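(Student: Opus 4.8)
The plan is to establish the equivalent assertion $|\Lambda_N(T)|=C_{N-A_k-B_k+k}\prod_{m=1}^{k}C_{a_m-1}C_{b_m-1}$ by induction on $k$, stripping off at each step the rightmost (hence lowest) prescribed point $(N-A_1+1,B_1)$. The base case $k=0$ is $\Lambda_N(\emptyset)=S_N(312)$, which has $C_N$ elements. For the inductive step, observe that $\Lambda_N(T)\subseteq S^{\bullet}(N,N-A_1+1,B_1)$, so Lemmas \ref{lemmain} and \ref{lemphiA} apply with $M=N$, $t=A_1-1$, $j=B_1$ (this needs $B_1<N-A_1+1$, which follows from $N\geq A_k+B_k\geq A_1+B_1$): each $\sigma\in\Lambda_N(T)$ factors uniquely as $\sigma=\ins(\tilde{\sigma},\hat{\sigma},i_0)$ with $\hat{\sigma}\in S^{\Box}(B_1+i_0-1,i_0,B_1)$, $\tilde{\sigma}\in S_{N-A_1+1-i_0}(312)$, and $\max\{1,B_1-A_1+1\}\leq i_0\leq B_1$.

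The first key point is that the defining constraints of $\Lambda_N(T)$ force $i_0=B_1$. Indeed, every $p\in(N-A_1+1,N]$ has $N-p+1\in[1,a_1-1]\subseteq[1,A_k]\setminus\{A_1,\dots,A_k\}$, so $\sigma_p>B_k\geq B_1$ for all such $p$. But the proof of Lemma \ref{lemmain}(d) shows that the $B_1-i_0$ values of $[1,B_1)$ absent from $\sigma_1,\dots,\sigma_{i_0-1}$ must all occur at positions in $(N-A_1+1,N]$; since no such position carries a value below $B_1$, we get $B_1-i_0=0$. Hence $\hat{\sigma}\in S^{\Box}(A_1+B_1-1,B_1,B_1)$ is ``on the diagonal'', and by Remark \ref{remMadras} its graph splits into a $312$-avoiding block of size $B_1-1$ on positions $[1,B_1-1]$ with values $[1,B_1-1]$, the point $(B_1,B_1)$, and a $312$-avoiding block of size $A_1-1$ on positions $[B_1+1,A_1+B_1-1]$ with values $[B_1+1,A_1+B_1-1]$. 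Reading off $\ins(\tilde{\sigma},\hat{\sigma},B_1)$ from Definition \ref{defiinsert} then exhibits $\sigma$ as the concatenation of: a free $312$-avoiding block of size $B_1-1$ on positions $[1,B_1-1]$ with values $[1,B_1-1]$; the block $\tilde{\sigma}$, shifted up by $B_1$, on positions $[B_1,N-A_1]$ with values $[B_1+1,N-A_1+1]$; the pivot $\sigma_{N-A_1+1}=B_1$; and a free $312$-avoiding block of size $A_1-1$ on the last $A_1-1$ positions with the top values $[N-A_1+2,N]$.

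The second key step, which I expect to be the main bookkeeping obstacle, is to verify that the leftover requirements of $\Lambda_N(T)$ translate \emph{exactly} to $\tilde{\sigma}\in\Lambda_{N'}(\tilde{T})$, where $N'=N-A_1-B_1+1$ and $\tilde{T}=\{(-(A_m-A_1),\,B_m-B_1):m=2,\dots,k\}$, i.e.\ $\tilde{a}_\mu=a_{\mu+1}$, $\tilde{b}_\mu=b_{\mu+1}$ for $\mu=1,\dots,k-1$. Using $N\geq A_k+B_k$, one checks that for each $m\geq2$ the position $N-A_m+1$ and value $B_m$ lie in the middle block and relabel to $(N'-\tilde{A}_{m-1}+1,\tilde{B}_{m-1})$; that each constraint ``$\sigma_p>B_k$'' with $p$ in the middle block becomes ``$\tilde{\sigma}_{p'}>B_k-B_1$'' at the right position; and that each such constraint falling on the last $A_1-1$ positions holds automatically, those positions carrying the $A_1-1$ largest values.

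Finally, exactly as in the proof of Theorem \ref{propS_B(N)}, the restriction of $\phi_{\bullet;N,A_1-1,B_1}$ to $\Lambda_{N'}(\tilde{T})\times S^{\Box}(A_1+B_1-1,B_1,B_1)$ (the $i_0=B_1$ part of $\dom_{\bullet}(N,A_1-1,B_1)$ with its first factor cut down to $\Lambda_{N'}(\tilde{T})$) is a bijection onto $\Lambda_N(T)$: surjectivity and the description of the image come from the analysis above, injectivity from Lemma \ref{lemonetoone}, and $312$-avoidance of the image from Proposition \ref{propS_A(N+M,i+M,j)}. Hence $|\Lambda_N(T)|=C_{B_1-1}C_{A_1-1}\,|\Lambda_{N'}(\tilde{T})|$ by Remark \ref{remMadras}. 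Since $N'\geq(A_k-A_1)+(B_k-B_1)$, the inductive hypothesis gives $|\Lambda_{N'}(\tilde{T})|=C_{N'-(A_k-A_1)-(B_k-B_1)+(k-1)}\prod_{m=2}^{k}C_{a_m-1}C_{b_m-1}=C_{N-A_k-B_k+k}\prod_{m=2}^{k}C_{a_m-1}C_{b_m-1}$; multiplying (recall $A_1=a_1$, $B_1=b_1$) and dividing by $C_N$ finishes the induction.
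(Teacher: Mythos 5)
Your proof is correct and follows essentially the same route as the paper's: induction on $k$, peeling off the point $(N-A_1+1,B_1)$ via the restriction of $\phi_{\bullet;N,A_1-1,B_1}$ to $\Lambda_{N-A_1-B_1+1}(\tilde{T})\times S^{\Box}(A_1+B_1-1,B_1,B_1)$, with the constraints of $\Lambda_N(T)$ forcing $i_0=B_1$, injectivity from Lemma \ref{lemonetoone}, and the count from Remark \ref{remMadras}. The only cosmetic difference is that you start the induction at $k=0$ while the paper proves $k=1$ separately, which amounts to the same argument.
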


\begin{proof}
We will give a proof by induction on $k$. First we shall prove the $k=1$ case, i.e. 
\[
  \left| \Lambda_N(\left\{\left(-A_1,B_1\right)\right\}) \right| 
   \;=\; \displaystyle C_{N-A_1-B_1+1} \,C_{a_1-1}\,C_{b_1 -1}\,.
\]
Consider the map $\phi_{\bullet;N,t,j}$ in Definition \ref{phiA}. We take $t= A_1{-}1$ and $j= B_1$. 
Let $\mathbb{I}$ be the image of $S_{N-(A_1-1)-B_1}(312) \times S^{\Box}(A_1+B_1-1,B_1,B_1)$ under $\phi_{\bullet;N,t,j}$.  
We will show that 
$\mathbb{I}= \Lambda_N(\left\{\left(-A_1,B_1\right)\right\})$.   

Let $\tau \in S_{N-A_1-B_1+1}(312)$ and $\rho \in S^{\Box}(A_1+B_1-1,B_1,B_1)$, 
and let $\psi = \phi_{\bullet;N,(A_1-1),B_1}((\tau,\rho))= \ins(\tau,\rho,B_1)$.
Then $\psi_{N-A_1+1}=B_1$ by Proposition \ref{propS_A(N+M,i+M,j)}.
 By the Pigeonhole Principle, $\rho_m > B_1$ for all $m > B_1$.  Therefore,
 by Definition \ref{defiinsert}, for $u\in [1,A_1)$ we have 
 \[   \psi_{N-u+1} \;\geq \; \rho_{N-u+1-(N-A_1-B_1+1)} \;=\;\rho_{B_1+A_1-u} \;>\,B_1  \]
 (using the above observation, since $B_1{+}A_1{-}u>B_1$).
 Hence $\psi \in \Lambda_N(\left\{\left(-A_1,B_1\right)\right\}$, which proves that 
$\mathbb{I} \subseteq \Lambda_N(\left\{\left(-A_1,B_1\right)\right\})$.

Now let $\sigma \in \Lambda_N(\left\{\left(-A_1,B_1\right)\right\}) $ and let 
$i_0 = \min \left\{i \in \left[1,N \right] : \sigma_i > B_1 \right\}$. 
By Lemma \ref{lemmain}(a,b), $i_0 \in \left[1,N-A_1\right]$ and $\sigma_i > B_1$ for all 
$i \in \left[i_0,N-A_1\right].$ Since also $\sigma_i > B_1$ for all 
$i \in \left(N-A_1+1,N\right]$, we see that $\sigma_i < B_1$ if and only if $i < i_0$; hence $i_0 = B_1$.
By Lemma \ref{lemmain}(f), we can write $\sigma = \ins(\tilde{\sigma},\hat{\sigma},B_1)$ where 
$\hat{\sigma} \in S^{\Box}(A_1+B_1-1,B_1,B_1)$ and $\tilde{\sigma} \in S_{N-A_1-B_1+1}(312)$.
Hence $\sigma \in \mathbb{I}$ and we conclude that $\mathbb{I}= \Lambda_N(\left\{\left(-A_1,B_1\right)\right\})$.  
By Lemma \ref{lemonetoone} the restricted map is one-to-one and hence a bijection with $\Lambda_N(\left\{\left(-A_1,B_1\right)\right\})$. We conclude that
\begin{align*}
\left| \Lambda_N(\left\{\left(-A_1,B_1\right)\right\} \right|
 &\;=\; \left|S_{N-(A_1-1)-B_1}(312)\right| \cdot \left|S^{\Box}(A_1+B_1-1,B_1,B_1)\right| \\
 &\;=\; C_{N-A_1-B_1+1} \,C_{B_1-1}\,C_{A_1-1}= C_{N-A_1-B_1+1}\,C_{b_1-1}\,C_{a_1-1},
\end{align*} 
where we have used Remark \ref{remMadras}. This proves the result for $k=1$.

For the induction step, we assume that the statement is true for $k-1$.
Given $T=\left\{(-A_m,B_m): m=1,\ldots,k\right\} \in \textrm{Seq}{\nwarrow}$, let 
$T^*=\left\{(-A_m,B_m)-(-A_1,B_1): m=2,\ldots,k\right\} = \left\{(-A_m^*,B_m^*): m=2,\ldots,k \right\} \in \textrm{Seq}{\nwarrow}$ where $A_m^*=\sum_{l=2}^{m}a_{l}=A_m{-}A_1$ and 
$B_m^*=\sum_{l=2}^{m}b_{l}=B_m{-}B_1$. 
Let $\hat{\phi}$ be the restriction of the map $\phi_{A;N,A_1-1,B_1}$  to the domain 
$\Lambda_{N-A_1-B_1+1}(T^*) \times S^{\Box}(A_1+B_1-1,B_1,B_1)$, and let $\hat{\mathbb{I}}$
be the image of $\hat{\phi}$.  We claim that $\hat{\mathbb{I}}\,=\,\Lambda_N(T)$ and that 
$\hat{\phi}$ is a bijection. Let $\tau \in \Lambda_{N-A_1-B_1+1}(T^*)$ and $\rho \in S^{\Box}(A_1+B_1-1,B_1,B_1)$, and let $\psi\,=\,\hat{\phi}(\tau,\rho)\,=\,\ins(\tau,\rho,B_1)$. 
As in the $k{=}1$ case, we have $\psi_{N-A_1+1}=B_1$.  For $m\in [2,k]$, we have
$B_1\leq N{-}A_m{+}1<N{-}A_1{+}1$, so 
\[  \psi_{N-A_m+1} \;=\;  \tau_{(N-A_m+1)-B_1+1}+\rho_{B_1}  \;=\;
   \tau_{(N-A_1-B_1+1)-A_m^*+1}+B_1 \;=\; B_m^*+B_1 \;=\; B_m \,.
\]
Now it is not hard to see that $\psi\in  \Lambda_N(T)$.   This shows that $\hat{\mathbb{I}}
\subset \Lambda_N(T)$.

Next we show that  $\Lambda_N(T)\subset \hat{\mathbb{I}}$. 
Let $\sigma \in \Lambda_N(T)$. Let $i_0 = \min \left\{i \in \left[1,N \right] : \sigma_i > B_1 \right\}$. 
Then $i_0=B_1$ (as shown in $k=1$ case), and, by Lemma \ref{lemmain}(f),  
$\sigma = \ins(\tilde{\sigma},\hat{\sigma},B_1)$ where 
$\hat{\sigma}=\patt(\sigma_1,\ldots,\sigma_{B_1-1},\sigma_{N-A_1+1},\ldots,\sigma_{N}) \in S^{\Box}(A_1+B_1-1,B_1,B_1)$ and $\tilde{\sigma} = 
\patt(\sigma_{B_1},\ldots,\sigma_{N-A_1}) \in S_{N-A_1-B_1+1}(312)$.
Moreover, for $2\leq m\leq k$,
\[  \tilde{\sigma}_{(N-A_1-B_1+1)-A_m^{*}+1} = \tilde{\sigma}_{(N-A_m+1)-B_1+1} = \sigma_{N-A_m+1} - B_1 = B_m - B_1 = B_m^{*}.
\]
From here it is not hard to show that $\tilde{\sigma} \in \Lambda_{N-A_1-B_1+1}(T^*)$ and 
hence $\psi\in \hat{\mathbb{I}}$.  This proves $\Lambda_N(T)\subset \hat{\mathbb{I}}$.
Again, by Lemma \ref{lemonetoone} the map $\hat{\phi}$ is one-to-one. This verifies the claimed
bijection,  and we  conclude that
\begin{align*}
\left| \Lambda_N(T) \right|=\left| \Lambda_{N-A_1-B_1+1}(T^*) \right|\left|S^{\Box}(A_1+B_1-1,B_1,B_1)\right|.
\end{align*}
By the inductive step and Remark \ref{remMadras}, the result follows. 
\end{proof}

\medskip

The following result is a direct corollary.  Recall that for $T$ of the form (\ref{T-decr}), we
define $B(T)=[-A_k,-1]\times [1,B_k]\setminus T$. Also recall the function $\pi(-a,b)$ 
from Definition \ref{def-pi}.

\begin{prop}\label{prop64}
Let $T$ be of the form (\ref{T-decr}). 
Then 
\begin{align*}
 \theta(T,B(T)) \;=\; \lim_{N \rightarrow \infty} P_N(\Lambda_N(T)) 
 \;=\; \prod_{m=1}^{k} \frac{C_{b_m-1}C_{a_m-1}}{4^{b_m+a_m-1}} \;=\;
  \prod_{m=1}^k \pi(-a_m,b_m) \,.
\end{align*}
\end{prop}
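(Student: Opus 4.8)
The plan is to read the statement off Theorem \ref{theorem63} by passing to the limit $N\to\infty$, after first identifying $P_N(\Lambda_N(T))$ with $\theta_N(T,B(T))$. For the identification: by definition $\sigma\in\Lambda_N(T)$ exactly when $\sigma_{N-A_m+1}=B_m$ for $m=1,\dots,k$ — that is, $X^N_{(-A_m,B_m)}=1$ for every such $m$ — and $\sigma_{N-t+1}>B_k$ for every $t\in[1,A_k]\setminus\{A_1,\dots,A_k\}$. The latter clause says precisely that $\sigma_{N-t+1}\notin[1,B_k]$ for those $t$, i.e.\ $X^N_{(-t,j)}=0$ for all $j\in[1,B_k]$ and all $t\in[1,A_k]$ with $t\notin\{A_1,\dots,A_k\}$; and the former clause automatically forces $X^N_{(-A_m,j)}=0$ for all $j\neq B_m$. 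Since $B(T)=[-A_k,-1]\times[1,B_k]\setminus T$ and $A_k,B_k\le N$, the event $\Lambda_N(T)$ is exactly $\{X^N_q=1\ \forall q\in T\}\cap\{X^N_r=0\ \forall r\in B(T)\}$, so $P_N(\Lambda_N(T))=\theta_N(T,B(T))$ for all $N\ge A_k+B_k$.

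Next I would invoke Theorem \ref{theorem63}, which gives, for $N\ge A_k+B_k$,
\[ P_N(\Lambda_N(T))\;=\;\frac{C_{N-A_k-B_k+k}}{C_N}\prod_{m=1}^{k}C_{a_m-1}C_{b_m-1}. \]
The only analytic input needed is the elementary relation (already used in the proof of Proposition \ref{k-1-case}) that $C_{n-1}/C_n=\tfrac14\bigl(1+O(1/n)\bigr)$, whence $C_{n-\ell}/C_n\to 4^{-\ell}$ as $n\to\infty$ for each fixed $\ell\in\mathbb{Z}$. Applying this with $\ell=A_k+B_k-k$ yields $C_{N-A_k-B_k+k}/C_N\to 4^{-(A_k+B_k-k)}$.

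Finally comes the exponent bookkeeping: since $A_k=\sum_{m=1}^k a_m$ and $B_k=\sum_{m=1}^k b_m$, we have $A_k+B_k-k=\sum_{m=1}^k(a_m+b_m-1)$, so $4^{-(A_k+B_k-k)}=\prod_{m=1}^k 4^{-(a_m+b_m-1)}$. Combining the last three displays,
\[ \lim_{N\to\infty}P_N(\Lambda_N(T))\;=\;\prod_{m=1}^k\frac{C_{a_m-1}C_{b_m-1}}{4^{a_m+b_m-1}}\;=\;\prod_{m=1}^k\pi(-a_m,b_m), \]
the last equality being Definition \ref{def-pi}. Since $P_N(\Lambda_N(T))=\theta_N(T,B(T))$, the left-hand limit equals $\theta(T,B(T))$, which in particular re-confirms in this special case the existence guaranteed abstractly by Theorem \ref{thm-limX}.

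There is essentially no obstacle here — the proposition really is a direct corollary. The only points needing care are (i) verifying that the ``$\sigma_{N-t+1}>B_k$'' clause of $\Lambda_N(T)$, together with the pinned values $\sigma_{N-A_m+1}=B_m$, exhausts exactly the conditions defining $\theta_N(T,B(T))$, and (ii) the trivial identity $A_k+B_k-k=\sum_{m=1}^k(a_m+b_m-1)$ used to distribute the power of $4$ over the product.
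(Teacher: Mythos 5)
Your proposal is correct and follows essentially the same route as the paper: the paper's proof is exactly the observation that $P_N(\Lambda_N(T))=\theta_N(T,B(T))$ combined with Theorem \ref{theorem63} and the limit $C_{N-\ell}/C_N\to 4^{-\ell}$ (which the paper obtains via Remark \ref{remgammaandh}(c) for $h(u)=C_u/4^u$, while you use $C_{n-1}/C_n\to\tfrac14$ directly --- an immaterial difference). Your explicit verification that the clause $\sigma_{N-t+1}>B_k$ together with the pinned values matches the event defining $\theta_N(T,B(T))$ is a useful spelling-out of what the paper leaves as an ``observation.''
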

\begin{proof}  This follows from Theorem  \ref{theorem63} and Remark \ref{remgammaandh}(c)
together with the observation that $\theta_N(T,B(T)) \,=\,P_N(\Lambda_N(T))$.
\end{proof}

Next we give the proof of Theorem \ref{thm-RW} which states that the set
$W^{*}=\left\{q \in Q : X_q=1\right\}$ is an infinite random member of Seq${\nwarrow}$
of the form $\left\{\vec{V}_m: m\in\mathbb{N}\right\}$ 
where $\left\{(\vec{V}_m-\vec{V}_{m-1}): m \in \mathbb{N}\right\}$ are i.i.d.\ $Q$-valued random 
vectors with distribution $\pi(-a,b)$.   Recall that for finite $D,F\subset Q$, we have
$\theta(D,F)\,=\,P_{\infty}(D\subset W^{*}, F\cap W^{*}=\emptyset)$.
\begin{proof}[Proof of Theorem \ref{thm-RW}]
As shown in the proof of Theorem \ref{thm-limX}, $\theta(D,\emptyset)=0$ whenever 
$D$ is a finite subset of $Q$ that is {\em not} in Seq${\nwarrow}$.  Therefore,
with probability one, $W^{*}$ is a (finite or infinite) member of Seq${\nwarrow}$.
Write the elements of $W^*$ as $\vec{V}_1,\vec{V}_2,\ldots$, where 
$\vec{V}_{m}\nwarrow \vec{V}_{m-1}$ for every $m=1,\ldots,|W^*|$ (where $\vec{V}_0=(0,0)$).

For $k\in \mathbb{N}$, consider $T$ of the form (\ref{T-decr}).  Then
\[   \theta(T,B(T)) \;=\;  P_{\infty}(|W^*|\geq k \hbox{ and } \vec{V}_m=(-A_m,B_m)
    \hbox{ for }m=1,\ldots,k ) \,.
 \]
Hence, by Proposition \ref{prop64} and the fact that 
$\pi$ is a probability distribution on $Q$, we see that  
\[  P_{\infty}\left(\left|W^*\right| \geq k \right) \;= {\displaystyle \scriptsize
            \sum_{
            \begin{array}{c}
             T \in Seq\nwarrow \\
             \left|T\right|=k
             \end{array}}} \theta(T,B(T)) \;=\; \left( \sum_{(-a,b)\in Q}\pi(-a,b)\right)^k \;=\; 1.
\]
Since $k$ is arbitrary,  $W^{*}$ must be infinite with probability 1.  
The above product form of $\theta(T,B(T))$ shows that
the jumps $\{\vec{V}_m-\vec{V}_{m-1}\}$ are i.i.d.\ with common distribution $\pi$.

Finally, each component of $\vec{V}_1$ has infinite mean since 
$\frac{C_{a-1}}{4^a}= \frac{1}{a\,4^a}  \binom{2a-2}{a-1}  \;\asymp\;  a^{-3/2}.$
\end{proof}

We now shift our focus from the points northwest of the origin to the points northwest
of a given point $(N-t,j)$ below the diagonal, and show that, conditional on
$X^N_{(-t,j)}=1$, we get the same limiting probabilities of nearby configurations as we do
in the bottom right corner.

\begin{prop}\label{prop-conditional}
For $m=1,\ldots,u$, let $a_m,b_m \in \mathbb{N}$ and let
$A_m= \sum_{l=1}^{m} a_l$, $B_m = \sum_{l=1}^{m} b_l$.  Then
\[
\lim_{N-t-j \rightarrow \infty} 
  P_N(\sigma_{N-t-A_{1}+1} =j{+}B_{1},  \ldots, \sigma_{N-t-A_{u}+1} = j{+}B_{u} \, | \, 
   \sigma_{N-t+1} = j) 
= \prod_{v=1}^u \rho(a_v,b_v).
\]
\end{prop}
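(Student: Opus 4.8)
The plan is to write the conditional probability as a ratio of cardinalities, peel off the ``corner'' constraint $\sigma_{N-t+1}=j$ using the decomposition behind Theorem~\ref{propS_B(N)}, and then recognize that the remaining factor is governed by Proposition~\ref{k-1-case}(b). Throughout I keep in mind the off-by-one: the index $N-t+1$ here corresponds to the parameter ``$t-1$'' in Theorems~\ref{propS_A(N,N-t,j)}--\ref{propS_B(N)}. We may assume $u\ge 1$, the case $u=0$ being the trivial identity $1=1$.

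First I would write, for $N-t-j$ large (so that $j+B_u<N-t-A_u+1$ and the event below is a genuine $\searrow_{u+1}$ configuration, with the point $\sigma_{N-t+1}=j$ playing the role of the rightmost index, i.e.\ $t_{u+1}=t-1$, $j_{u+1}=j$),
\[
  P_N\bigl(\sigma_{N-t-A_1+1}=j{+}B_1,\ldots,\sigma_{N-t-A_u+1}=j{+}B_u \,\big|\, \sigma_{N-t+1}=j\bigr)
  \;=\;\frac{\bigl|S^{\searrow_{u+1}}(N,\ldots)\bigr|}{\bigl|S^{\bullet}(N,N{-}t{+}1,j)\bigr|}.
\]
Applying the recursion of Theorem~\ref{propS_B(N)} once to the numerator, peeling off the point at index $N-t+1$, and using Remark~\ref{remMadras} to identify the emerging coefficient as $|S^{\Box}(t{-}1{+}i,i,j)|$, gives
\[
  \bigl|S^{\searrow_{u+1}}(N,\ldots)\bigr|
  \;=\;\sum_{i=\max\{1,\,j-t+1\}}^{j}\bigl|S^{\Box}(t{-}1{+}i,i,j)\bigr|\,
  \bigl|S^{\searrow_{u}}(N',\,N'{-}A_u{+}1,\ldots,N'{-}A_1{+}1,\,B_u,\ldots,B_1)\bigr|,
\]
where $N'=N-t+1-i$. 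By Theorem~\ref{propS_A(N,N-t,j)}, again via Remark~\ref{remMadras}, the denominator is exactly the same sum with $|S^{\searrow_u}(\ldots)|$ replaced by $C_{N'}$.

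Next I would invoke Proposition~\ref{k-1-case}(b) with $N$ replaced by $N'$ and $k$ by $u$: the configuration $\{(-A_m,B_m):1\le m\le u\}$ has precisely the form~(\ref{T-decr}), so $|S^{\searrow_u}(N',\ldots)| = C_{N'}\bigl[\prod_{v=1}^{u}\rho(a_v,b_v)\bigr]\bigl(1+O(1/N')\bigr)$. The essential point is uniformity of the error over the range of $i$: since $i\le j$ we have $N'=N-t+1-i\ge (N-t-j)+1$, hence $1+O(1/N')=1+O\bigl(1/(N-t-j)\bigr)$ uniformly in $i$, and the hypothesis of Proposition~\ref{k-1-case}(b) (namely $N'$ exceeding the fixed threshold $A_u+B_u-1$) holds for every $i$ in the sum once $N-t-j$ is large. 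Substituting, the factor $\prod_{v=1}^{u}\rho(a_v,b_v)\,(1+O(1/(N-t-j)))$ pulls out of the numerator sum, turning it into that factor times the denominator sum; the ratio therefore equals $\prod_{v=1}^{u}\rho(a_v,b_v)\,(1+O(1/(N-t-j)))$, and letting $N-t-j\to\infty$ completes the proof.

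The only genuine obstacle is bookkeeping: matching the peeled $\searrow_u$ configuration to the exact form~(\ref{T-decr}) required by Proposition~\ref{k-1-case}(b), tracking the ``$t$ versus $t-1$'' shift, and—most importantly—checking that the $O(1/N')$ in Proposition~\ref{k-1-case}(b) is uniform over the summation index, which is precisely where the hypothesis $N-t-j\to\infty$ (not merely $N\to\infty$) is needed: if $j$ were close to $N-t$ then $i$ could be close to $j$, forcing $N'$ to stay bounded and destroying the estimate. An equivalent route I might present instead is to use the bijection of Definition~\ref{phiA} directly: conditioned on $\sigma_{N-t+1}=j$ and on the value $i_0=\min\{i:\sigma_i>j\}$, Lemma~\ref{lemmain}(f) gives $\sigma=\ins(\tilde\sigma,\hat\sigma,i_0)$ with $\tilde\sigma$ uniform on $S_{N'}(312)$, and the defining formula for $\ins$ together with $\hat\sigma_{i_0}=j$ shows $\sigma_{N-t-A_m+1}=j+B_m\iff\tilde\sigma_{N'-A_m+1}=B_m$; averaging over $i_0$ and applying Proposition~\ref{k-1-case}(b) to $\tilde\sigma$ then yields the same conclusion.
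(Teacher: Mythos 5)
Your proposal is correct and follows essentially the same route as the paper's proof: write the conditional probability as $\bigl|S^{\searrow_{u+1}}(N,\ldots)\bigr|/\bigl|S^{\bullet}(N,N{-}t{+}1,j)\bigr|$, peel off the point $\sigma_{N-t+1}=j$ with one application of the recursion in Theorem~\ref{propS_B(N)}, identify the denominator as the matching sum via Theorem~\ref{propS_A(N,N-t,j)}, and apply Proposition~\ref{k-1-case}(b) to the inner factor, using $N'=N-t+1-i\ge N-t-j+1$ to get the error $O(1/(N-t-j))$ uniformly in $i$. Your explicit attention to the $t$ versus $t-1$ shift and to the uniformity of the error over the summation range matches exactly the considerations in the paper's argument.
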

\begin{proof}
Let $k=u+1$.  By  Theorem \ref{propS_B(N)},
\begin{align*}
& \frac{\left|S^{\searrow k}(N, N{-}t{-}A_{k-1}{+}1, \ldots, N{-}t{+}1, j{+}B_{k-1}, \ldots, j)\right|}{\left|S^{\bullet}(N,N{-}t{+}1,j)\right|} \\
& \hspace{15mm} = \;\frac{1}{\left|S^{\bullet}(N,N{-}t{+}1,j)\right|}\sum_{i= \max \{1,j-t+1\} }^{j} 
  \left|S^{\Box}(i+t-1,i,j)\right| C_{\tilde{N}}  \times \\
& \hspace{25mm}\frac{\left|S^{\searrow{k-1}}(\tilde{N},\tilde{N}{-}A_{k-1}{+}1, \ldots, 
  \tilde{N}{-}A_1{+}1,B_{k-1},\ldots,B_{1})\right|}{C_{\tilde{N}}}
\end{align*}
where $\tilde{N} \equiv \tilde{N}(i) = N{-}t{-}i{+}1$. By Proposition \ref{k-1-case}(b) and since $ \tilde{N} \geq N{-}t{-}j{+}1$, we know that
\begin{align*}
& \frac{\left|S^{\searrow k-1}(\tilde{N},\tilde{N}{-}A_{k-1}{+}1,\ldots, \tilde{N}{-}A_{1}{+}1, B_{k-1},\ldots,B_{1})\right|}{C_{\tilde{N}}}  \;=\;
 \left[ \prod_{v=1}^{k-1} \rho(a_v,b_v) \right]  \left(1+O\left(\frac{1}{N{-}t{-}j}\right)\right).
\end{align*}
Using this and Theorem \ref{propS_A(N,N-t,j)} gives
\begin{align*}
& \frac{\left|S^{\searrow k}(N, N{-}t{-}A_{k-1}{+}1, \ldots, N{-}t{+}1, j{+}B_{k-1}, \ldots, j)\right|}{\left|S^{\bullet}(N,N-t+1,j)\right|}  \\
& \hspace{15mm}
 = \;
\sum_{i= \max \left\{1,j-t+1 \right\} }^{j} \left|S^{\Box}(i+t-1,i,j)\right| C_{\tilde{N}} 
\, \frac{\left[ \prod_{v=1}^{k-1}\rho(a_v,b_v)\right] (1+O(\frac{1}{N-t-j}))}{\left|S^{\bullet}(N,N-t+1,j)\right|} \\
&\hspace{15mm}
 = \; \left[ \prod_{v=1}^{k-1}\rho(a_v,b_v) \right] \left(1+O\left(\frac{1}{N{-}t{-}j}\right)\right) \,.
\end{align*}
The result follows from the last equality. 
\end{proof}

Our final result, Theorem \ref{thrm-conditional}, follows from Proposition \ref{prop-conditional}
and an argument very similar to the proof of Theorem  \ref{thm-limX}.

\section{Acknowledgements}
We thank Alexey Kuznetsov and Sam Miner for helpful discussions.  The research of NM was supported in part by a Discovery Grant from NSERC of Canada.

\bibliography{pattern}
\end{document}